\newtheorem{thm}{Theorem}[section]
\newtheorem{thmx}{Theorem}[section]
\newtheorem{lem}[thm]{Lemma}
\newtheorem{cor}[thm]{Corollary}
\newtheorem{rem}{Remark} 
\newenvironment{proof}{\noindent\emph{Proof.}}{\hfill$\square$\medskip}
\newcommand{\D}{\Delta}
\newcommand{\vp}{\varphi}
\newcommand{\R}{\mathbb{R}}
\newcommand{\ve}{\varepsilon}
\newcommand{\K}{\mathcal{K}}
\DeclareMathOperator{\loc}{loc}
\author{Ali Hyder\thanks{The author is supported by the Swiss National Science Foundation projects no.  P2BSP2-172064 and P400P2-183866.}\\ {\small Johns Hopkins University}\\ {\small \texttt{ahyder4@jhu.edu} } }
\title{Concentration phenomena to a higher order Liouville equation} 
\begin{document}

\maketitle

\abstract 

We study blow-up and quantization phenomena for a sequence of solutions $(u_k)$ to the prescribed $Q$-curvature problem $$  (-\Delta)^nu_k= Q_ke^{2nu_k}\quad \text{in }\Omega\subset\R^{2n},\quad \int_{\Omega}e^{2nu_k}dx\leq C,$$   under natural assumptions on $Q_k$.  It is well-known that, up to a subsequence, either $(u_k)$ is bounded in a suitable norm, or there exists $\beta_k\to\infty$ such that $ u_k=\beta_k(\vp+o(1))$ in $\Omega\setminus (S_1\cup S_\vp)$ for some non-trivial non-positive $n$-harmonic function $\vp$ and for a finite set $S_1$, where  $S_\vp$ is the zero set  of $\vp$.  We  prove quantization of the total curvature $\int_{\tilde\Omega}Q_ke^{2nu_k}dx$ on the region $\tilde\Omega\Subset(\Omega\setminus S_\vp)$. We also consider a  non-local case in dimension three.

 \section{Introduction to the problem}

Given a bounded domain $\Omega\subset\R^{2n}$, we will consider a sequence $(u_k)$ of solutions to the prescribed $Q$-curvature equation
\begin{align}\label{eq-1}
   (-\D)^nu_k= Q_ke^{2nu_k}\quad \text{in }\Omega,
\end{align}
under the uniform (volume) bound
\begin{align}\label{eq-2}
\int_{\Omega}e^{2nu_k}dx\leq C,\quad k=1,2,3,\dots
\end{align}
and suitable bounds on $Q_k\geq 0$.

Contrary to the two dimensional situation studied by Br\'ezis-Merle \cite{BM} and  Li-Shafrir \cite{LS} (see also \cite{DR,   Lin-Wei,  Mal, ndi, RW}) where blow up occurs only on a finite set $S_1$, in dimension $4$ and higher it is possible to have blow-up on larger sets. More precisely, for a finite set $S\subset\Omega$ let us introduce
\begin{equation}\label{defK}
\mathcal{K}(\Omega, S):=\{\varphi\in C^\infty(\Omega \setminus S):\varphi\le 0,\,\varphi\not\equiv 0,\, \Delta^n \varphi\equiv 0\},
\end{equation}
and for a function $\varphi \in \mathcal{K}(\Omega,S)$ set
\begin{equation}\label{defS0}
S_\varphi:=\{x\in\Omega\setminus S: \varphi(x)=0\}.
\end{equation} We shall use the notations $S_\infty$ and $S_{sph}$ to denote the set of all \emph{blow-up} points and the set of all \emph{spherical blow-up} points respectively, where such points are defined as follows: 

A point $x\in \Omega$ is said to be a \emph{blow-up} point  
if there exists a sequence of points $(x_k)$ in $\Omega$ such that $$x_k\to x\quad\text{and  }u_k(x_k)\to\infty.$$  A point  $x\in S_\infty$ is said to be a \emph{spherical blow-up}  point if there exists $x_k\to x$ and $r_k\to 0^+$ such that for some $c\in \R$ 
  $$\eta_k(x):=u_k(x_k+r_kx)+\log r_k+c\to\eta (x) \quad\text{in }C^{2n-1}_{loc}(\R^{2n}),$$    where $\eta$ is a spherical solution to 
\begin{align}\label{eq-R2n}
(-\D)^\frac m2 u=(m-1)!e^{mu}\quad\text{in }\R^{m},\quad \int_{\R^{m}} e^{mu(x)}dx<\infty,
\end{align} with $m=2n$, 
that is, $\eta$ is of the form \begin{align}\label{spherical}
\eta(x)=\log\left(\frac{2\lambda}{1+\lambda^2|x-x_0|^2}\right),
\end{align}    $\text{for some }\lambda>0 \text{ and }x_0\in\R^{m}.$

\begin{thmx}[\cite{ARS},  \cite{Mar1}]\label{ARSM}   
Let $\Omega$ be a bounded domain in $\R^{2n}$, $n>1$ and let $(u_k)$ be a sequence of solutions to \eqref{eq-1}-\eqref{eq-2}, where 
\begin{align}\label{Q0}
Q_k\rightarrow Q_0>0 \quad\text{in }C^0_{loc}(\Omega),
\end{align}
 and define the set   
$$S_1:=\left\{x\in\Omega:\lim_{r\to0^+}\liminf_{k\to\infty}\int_{B_r(x)}Q_ke^{2nu_k}dx\geq \frac{\Lambda_1}{2}\right\},\quad \Lambda_1:=(2n-1)!|S^{2n}|.$$ 
Then up to extracting a subsequence one of the following is true.
\begin{itemize}
 \item [i)] For every $0\leq \alpha<1$, $(u_k)$  is bounded in   $C^{2n-1,\alpha}_{\loc}(\Omega)$.
 \item [ii)] There exists $\vp\in \mathcal{K}(\Omega, S_1)$  and a sequence of numbers $\beta_k\to\infty$ such that 
 \begin{equation}\label{convbetak} \frac{u_k}{\beta_k}\to\vp\quad \text{in } C_{\loc}^{2n-1,\alpha}(\Omega\setminus (S_1\cup S_\varphi)),  \quad 0\leq \alpha<1. \end{equation} In particular $u_k\to-\infty$ locally uniformly in $\Omega\setminus (S_1\cup S_\varphi)$.  \end{itemize} \end{thmx}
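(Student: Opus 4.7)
The plan is to isolate in $u_k$ a polyharmonic part whose growth carries the blow-up of alternative (ii), and a ``potential'' part controlled by the right-hand side. Fix $\Omega'\Subset\Omega$ and decompose $u_k=v_k+h_k$ on $\Omega'$, where $v_k$ solves $(-\D)^n v_k=Q_ke^{2nu_k}$ in $\Omega'$ (say with Navier boundary data) via the Green representation, and $h_k$ is $n$-polyharmonic in $\Omega'$. The set $S_1$ is finite: the volume bound \eqref{eq-2} together with $Q_k\to Q_0$ in $C^0_{\loc}$ caps the total $Q_ke^{2nu_k}$-mass, and each point of $S_1$ absorbs at least $\Lambda_1/2$.

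First I would establish the $\ve$-regularity for $v_k$ outside $S_1$. By definition of $S_1$, every $x_0\in\Omega'\setminus S_1$ has a ball $B_r(x_0)$ on which, for a subsequence, $\int_{B_r(x_0)} Q_ke^{2nu_k}dx<\Lambda_1/2-\ve$. This is the polyharmonic analog of Br\'ezis--Merle's small-mass condition; via the Adams--Moser--Trudinger inequality applied to the Green potential $v_k$, one gets an $L^p$ bound for $e^{2npv_k}$ for some $p>1$, and then standard elliptic bootstrap (using $Q_ke^{2nu_k}=Q_ke^{2nv_k}e^{2nh_k}$ together with the already-obtained uniform upper bound on $h_k$ in Case (i), or division by $\beta_k$ in Case (ii)) yields $v_k$ bounded in $C^{2n-1,\alpha}_{\loc}(\Omega'\setminus S_1)$.

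Next I would perform the dichotomy on $h_k$. Since $h_k$ is polyharmonic, interior estimates reduce its local $C^{2n-1,\alpha}$-norm to its $L^\infty$-norm on a slightly larger set. If $\|h_k\|_{L^\infty_{\loc}(\Omega')}$ stays bounded along a subsequence, combining with the $v_k$-bound gives alternative (i). Otherwise set $\beta_k:=\|h_k^+\|_{L^\infty(\Omega')}\to\infty$ (or any equivalent renormalization) and consider $\vp_k:=h_k/\beta_k$. The functions $\vp_k$ are $n$-polyharmonic and locally bounded above, so by the same interior estimates a subsequence converges in $C^{2n-1,\alpha}_{\loc}(\Omega'\setminus S_1)$ to some polyharmonic $\vp$, with $\vp\not\equiv 0$ by the normalization (the latter requires a mean-value / Harnack argument for polyharmonic functions bounded above, to rule out loss of mass to $S_1$). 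The sign $\vp\le 0$ comes from \eqref{eq-2}: if $\vp$ were positive on an open set $U$, then $u_k=\beta_k\vp_k+v_k\to+\infty$ uniformly on $U$, so $\int_U e^{2nu_k}dx\to\infty$, contradicting the volume bound.

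Finally I would upgrade to convergence of $u_k/\beta_k$ itself. Since $v_k$ is locally bounded on $\Omega'\setminus S_1$ and $\beta_k\to\infty$, one has $v_k/\beta_k\to 0$ there, hence $u_k/\beta_k\to\vp$ in $C^{2n-1,\alpha}_{\loc}(\Omega'\setminus S_1)$. On $\Omega'\setminus(S_1\cup S_\vp)$ we further have $\vp<0$, giving $u_k\to-\infty$ locally uniformly; then $Q_ke^{2nu_k}\to 0$ in $L^1_{\loc}$, making the $v_k$-contribution subleading and validating \eqref{convbetak}. A diagonal exhaustion $\Omega'\nearrow\Omega$ and extraction of subsequences converts this into the global statement in $\Omega\setminus(S_1\cup S_\vp)$. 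The main obstacle I foresee is the sharp $\ve$-regularity step: the operator $(-\D)^n$ has no maximum principle, so controlling the sign and size of the Green representation of $v_k$ to extract an exponential $L^p$-bound requires the Adams inequality with the sharp constant $\Lambda_1$, and this is what fixes the threshold $\Lambda_1/2$ in the definition of $S_1$.
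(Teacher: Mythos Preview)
Your overall strategy---decompose $u_k=v_k+h_k$ with $h_k$ polyharmonic, perform a dichotomy on the size of $h_k$, and run small-mass $\varepsilon$-regularity on $v_k$---is exactly the route taken in \cite{ARS,Mar1} and in the paper's own re-proof (the argument for \eqref{conv-1}). There is, however, one genuine gap: your normalization $\beta_k:=\|h_k^+\|_{L^\infty(\Omega')}$ does not work. In alternative~(ii) the limit $\vp$ is nonpositive, and in fact one always has the a~priori bound $\int_{\Omega'}h_k^+\,dx\le\int_{\Omega'}(u_k^++|v_k|)\,dx\le C$ uniformly in $k$; in concrete situations (e.g.\ Example~1 of the paper, where $h_k\approx -k^2|x|^2$) one even has $h_k\le 0$ identically, so $\|h_k^+\|_{L^\infty}=0$ while $\|h_k\|_{L^\infty_{\loc}}\to\infty$. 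Your $\beta_k$ therefore need not diverge, and the subsequent normalization collapses; this also undermines your proposed Harnack argument for $\vp\not\equiv 0$. The paper instead sets $\beta_k:=\int_{B_{R_0}(x_0)}|h_k|\,dx$ on a fixed ball; together with the uniform $L^1$ bound on $h_k^+$, this is precisely the input to Lemma~\ref{polyharmonic}, which yields $\vp_k:=h_k/\beta_k\to\vp$ in $C^\ell_{\loc}(\Omega)$ and---since $\int_{B_{R_0}}|\vp|\,dx=1$ passes to the limit---immediately gives $\vp\not\equiv 0$ without any separate argument.

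Two further differences are worth noting. The paper takes $v_k$ to be the explicit logarithmic potential \eqref{def-vk} over the whole $\Omega$ rather than a Green solution on $\Omega'$ with Navier data; then $h_k$ is polyharmonic on all of $\Omega$, the compactness of $h_k/\beta_k$ holds across $S_1$ as well, and no diagonal exhaustion is needed (this in fact yields the stronger conclusion $\vp\in\K(\Omega,\emptyset)$ of \eqref{conv-1}). Also, the paper runs the dichotomy on $h_k$ \emph{before} the $v_k$-bootstrap: in case~(ii), once $h_k/\beta_k\to\vp$ one knows $h_k<0$ on compact subsets of $\Omega\setminus(S_1\cup S_\vp)$, whence $e^{2npu_k}\le e^{2npv_k}$, and the Jensen estimate on the log kernel bounds $\int e^{2np|v_k|}$ for some $p>1$. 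This ordering removes the circularity you flagged in your parenthetical.
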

 
Notice that Theorem \ref{ARSM} contains the results of \cite{BM}   since when $ n= 1$ by maximum principle we have $S_\vp=\emptyset$ for every $\vp\in \K(\Omega,S_1)$.   In fact the more complex blow-up behavior   for $n>1$ can be seen as a consequence of the size of $\K(\Omega, S_1)$. A way of recovering  the finiteness of the blow-up set   $S_\infty$ was given by Robert \cite{Rob2}  for  $n = 2$ and generalized by Martinazzi \cite{Mar2} for $n \geq 3$: 
 
 \begin{thmx}[\cite{Mar2, Rob2}]\label{RM} 
 Let $(u_k)$ be a sequence of solutions to \eqref{eq-1}, \eqref{eq-2} and \eqref{Q0}. Assume that we are in case $ii)$ of Theorem \ref{ARSM}.  If  $(u_k)$ satisfies   \begin{align}\label{cond-uk}  \int_{B_\rho(\xi)}|\D u_k|dx\leq C,\quad \int_{\Omega}(\D u_k)^-dx\leq C,\quad (\D u_k)^-:=\max\{-\D u_k,0\},  \end{align}  for some $B_\rho(\xi)\subset \Omega$,   then  $S_1=\{x^{(1)},x^{(2)},\dots, x^{(M)}\}$ is a finite set and    $$Q_ke^{2nu_k}\rightharpoonup \sum_{i=1}^M N_i\Lambda_1\delta_{x^{(i)}}$$ in the sense of measures, where $N_i\in\mathbb N:=\{1,2,3,\dots\}$.  Moreover, $u_k\to-\infty $ locally uniformly in $\Omega\setminus S_1$.   \end{thmx}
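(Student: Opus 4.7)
The plan is to upgrade the $n$-harmonic limit $\varphi$ of Theorem \ref{ARSM}(ii) to a subharmonic function on $\Omega\setminus S_1$, use the maximum principle to force $S_\varphi=\emptyset$, and finally quantize the concentrated mass by a bubble-extraction argument at each blow-up point. Setting $v_k := u_k/\beta_k$, so $v_k\to\varphi$ in $C^{2n-1,\alpha}_{\loc}(\Omega\setminus(S_1\cup S_\varphi))$, and using $\Delta u_k + (\Delta u_k)^-=(\Delta u_k)^+\geq 0$, for every non-negative $\psi\in C_c^\infty(\Omega\setminus(S_1\cup S_\varphi))$ the global $L^1$-bound on $(\Delta u_k)^-$ in \eqref{cond-uk} yields
\[
\int \psi\,\Delta v_k\,dx \;\geq\; -\frac{1}{\beta_k}\int\psi\,(\Delta u_k)^-\,dx \;\to\; 0,
\]
while $\Delta v_k\to\Delta\varphi$ uniformly on the support of $\psi$; hence $\Delta\varphi\geq 0$ on $\Omega\setminus(S_1\cup S_\varphi)$ and, by continuity of $\Delta\varphi$, on all of $\Omega\setminus S_1$. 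A standard covering argument using the threshold $\Lambda_1/2$ together with the locally uniform bound on $\int Q_ke^{2nu_k}\,dx$ shows $S_1$ is locally finite in $\Omega$; since $n\geq 2$, we may take $\Omega\setminus S_1$ connected, so the strong maximum principle applied to the non-positive, non-trivial subharmonic $\varphi$ forces $\varphi<0$ everywhere, i.e.\ $S_\varphi=\emptyset$.

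With $S_\varphi=\emptyset$, Theorem \ref{ARSM}(ii) now gives $u_k\to-\infty$ locally uniformly on $\Omega\setminus S_1$, so $Q_ke^{2nu_k}\to 0$ uniformly on each compact $K\Subset\Omega\setminus S_1$. Combined with the uniformly bounded total mass on each such $K$, this forces $Q_ke^{2nu_k}\,dx\rightharpoonup\sum_i c_i\delta_{x^{(i)}}$ along a subsequence with $c_i\geq\Lambda_1/2$, and the total mass bound confines $S_1=\{x^{(1)},\ldots,x^{(M)}\}$ to be finite.

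It remains to show $c_i=N_i\Lambda_1$ with $N_i\in\mathbb N$. Fix a blow-up point $x^{(i)}$, isolate it by a small ball, and choose centers $y_k\to x^{(i)}$ and scales $r_k\to 0^+$ so that the rescaling $\eta_k(x):=u_k(y_k+r_kx)+\log r_k$ is normalized at the first (largest) bubble around $x^{(i)}$. The rescaled equation has curvature $\tilde Q_k\to Q_0(x^{(i)})>0$, the volume bound survives on compact sets, and the integral controls in \eqref{cond-uk} rescale favorably enough to classify the $C^{2n-1}_{\loc}$-limit $\eta$ as a spherical solution \eqref{spherical} of mass exactly $\Lambda_1$. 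A Pohozaev-type identity on annular necks between successive bubble scales then shows that no curvature mass is lost in the necks, so $c_i$ equals the number of bubbles extracted at $x^{(i)}$ times $\Lambda_1$; the total mass bound terminates the iteration after $N_i\in\mathbb N$ bubbles. The hardest step is this last one: both the classification of blow-up limits (ruling out polynomial-growth non-spherical solutions) and the neck analysis rely precisely on the integral hypotheses \eqref{cond-uk}, and this is where the proofs in \cite{Mar2} and \cite{Rob2} diverge between the cases $n\geq 3$ and $n=2$.
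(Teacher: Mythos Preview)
The paper does not prove Theorem~\ref{RM}; it is quoted from \cite{Mar2,Rob2} and used as an input (in the first proof of \eqref{NLambda}). The only relevant comment in the paper is the sentence after Theorem~\ref{thm2}: ``Theorem~\ref{thm2} contains Theorem~\ref{RM} as \eqref{cond-uk} implies that $S_\varphi=\emptyset$,'' with no justification of that implication. Your first paragraph supplies exactly that justification---the bound on $\int_\Omega(\Delta u_k)^-$ passes to the limit to give $\Delta\varphi\geq 0$ on $\Omega\setminus S_1$, and then the strong maximum principle forces $\varphi<0$---and this is indeed the mechanism in \cite{Mar2}. One small gap: extending $\Delta\varphi\geq 0$ from $\Omega\setminus(S_1\cup S_\varphi)$ to all of $\Omega\setminus S_1$ by continuity requires that $S_\varphi$ have empty interior; this holds because $\varphi$ is polyharmonic, hence real-analytic, and $\varphi\not\equiv 0$.

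Your quantization sketch is in the spirit of \cite{Mar2,Rob2} but is quite compressed at the crucial point. You assert that ``the integral controls in \eqref{cond-uk} rescale favorably enough'' to classify the blow-up limit as spherical; what is actually needed is a local $L^1$ bound on $\Delta u_k$ on all of $\Omega$ (not just on the single ball $B_\rho(\xi)$), so that after rescaling one obtains $\int_{B_R}|\Delta\eta|\,dx=O(R^{2n-2})$ and can invoke Theorem~\ref{thmclas}. Getting this from \eqref{cond-uk} requires an intermediate step exploiting that $\Delta u_k$ has polyharmonic source $\Delta^{n-1}(\Delta u_k)=(-1)^nQ_ke^{2nu_k}\in L^1$, together with the one-ball bound and the global bound on $(\Delta u_k)^-$; this is not automatic and is where \cite{Mar2} does real work. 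The paper's own route (in the proof of Theorem~\ref{thm2}) sidesteps this entirely: it works on the region $\{\varphi<0\}$ without \eqref{cond-uk}, derives the Laplacian control from the decomposition $u_k=v_k+h_k$ and the relation $r_k^2\beta_k\to 0$ (see \eqref{est-laplacian}), and then either applies Theorem~\ref{RM} in a rescaled ball or, assuming $Q_k\in C^1$, replaces the neck analysis by the Pohozaev identity of Lemma~\ref{poho2} (Lemma~\ref{gap}).
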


It is worth pointing out that  without the additional assumption \eqref{cond-uk} the blow-up set $S_\infty$ need not be finite. In general, $S_\infty$ could be a hypersurface, see e.g. \cite{ARS, HIM, HM-gluing}. \\

In our first theorem  we show that  the profile of $\frac{u_k}{\beta_k}$ near the zero set $S_\vp$ is very closed to that of $\vp$ in $C^{1,\alpha}$ norm,  and we give a characterization of the blow-up points outside the zero set $S_\vp$.   
  
\begin{thm}\label{thm1}
Let $\Omega$ be a bounded domain in $\R^{2n}$, $n>1$ and let $(u_k)$ be a sequence of solutions to \eqref{eq-1}-\eqref{eq-2} for some $Q_k$ satisfying \eqref{Q0}.  Assume that we are in case $ii)$ of Theorem \ref{ARSM}. 
Then   
\begin{align}\label{conv-1}
\vp\in \mathcal{K}(\Omega, \emptyset) \quad\text{and }  \frac{u_k}{\beta_k}\to\vp\quad \text{in } C_{\loc}^{2n-1,\alpha}(\Omega\setminus (S_1\cup S_\vp)),  \quad 0\leq \alpha<1, \end{align}
\begin{equation}\label{conv-2}
\frac{u_k}{\beta_k}\to\vp\quad \text{in } C_{\loc}^{1,\alpha}(\Omega\setminus S_{sph}),\quad 0\leq\alpha<1.
\end{equation}
In addition, if $Q_k$ is bounded in $ C^1_{loc}(\Omega)$  then  \begin{align}\label{critical}
S_\infty\subseteq  \{x\in\Omega:\nabla\vp(x)=0\}. 
\end{align}
 \end{thm}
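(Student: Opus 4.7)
The plan is to establish the three parts of Theorem~\ref{thm1} in sequence.

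\textbf{Step 1 (extending $\vp$ across $S_1$).} Fix $x^{(i)} \in S_1$ and a ball $B = B_\rho(x^{(i)}) \subset \Omega$ disjoint from $S_1\setminus\{x^{(i)}\}$. I decompose $u_k = v_k + h_k$ on $B$, with $v_k$ solving $(-\D)^n v_k = Q_k e^{2nu_k}$ under homogeneous Navier data and $h_k = u_k - v_k$ being $n$-polyharmonic. The uniform $L^1$-bound $\int_B Q_k e^{2nu_k}\,dx \leq C$ combined with the $L^1$-to-$W^{2n-1,p}$ estimates of Brezis--Merle type used in \cite{Mar1, ARS} gives $\|v_k\|_{W^{2n-1,p}(B)} \leq C$ for some $p>1$, so $v_k/\beta_k \to 0$. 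Separately, $\int u_k^+\,dx \leq C$ (from $e^{2nu_k} \geq 2nu_k^+$) and $\int u_k\,dx = O(\beta_k)$ (from convergence off $S_1\cup S_\vp$ plus Fatou) yield $\|h_k/\beta_k\|_{L^1(B)} \leq C$; interior estimates for $n$-polyharmonic functions then render $h_k/\beta_k$ precompact in $C^{2n-1,\alpha}_{\loc}(B)$. Its limit is $n$-polyharmonic on $B$ and coincides with $\vp$ on $B\setminus(\{x^{(i)}\}\cup S_\vp)$, furnishing the extension. Repeating at each point of $S_1$ yields $\vp \in \K(\Omega,\emptyset)$; the convergence in \eqref{conv-1} is then exactly \eqref{convbetak}.

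\textbf{Step 2 ($C^{1,\alpha}$-convergence off $S_{sph}$).} Fix $x_0 \in \Omega\setminus S_{sph}$; I aim to show $u_k/\beta_k \to \vp$ in $C^{1,\alpha}(B_r(x_0))$ for some $r>0$. Argue by contradiction: if this fails, extract $x_k \to x_0$ and $r_k \to 0$ so that $\tilde u_k(x) := u_k(x_k+r_kx)+\log r_k$ --- satisfying $(-\D)^n\tilde u_k = \tilde Q_k e^{2n\tilde u_k}$ with $\tilde Q_k(x) = Q_k(x_k+r_kx) \to Q_0(x_0)>0$ --- is suitably normalized and converges in $C^{2n-1,\alpha}_{\loc}(\R^{2n})$ to some $\eta$. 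The volume constraint $\int_{B_R}e^{2n\tilde u_k}\,dx = \int_{B_{Rr_k}(x_k)}e^{2nu_k}\,dy \leq C$ passes to the limit, so $\eta$ has finite volume and, by the classification of \eqref{eq-R2n}, is spherical as in \eqref{spherical}, forcing $x_0 \in S_{sph}$ --- a contradiction. Hence $u_k/\beta_k$ is locally $C^{1,\alpha}$-bounded off $S_{sph}$, and $L^p$-theory for $(-\D)^n(u_k/\beta_k) = Q_ke^{2nu_k}/\beta_k$, whose right-hand side is $L^1$-small by Step 1, upgrades the convergence to $C^{1,\alpha}$.

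\textbf{Step 3 (blow-up confined to $\{\nabla\vp = 0\}$).} Suppose by contradiction that $x_0 \in S_\infty$ and $\nabla\vp(x_0) \neq 0$. Since $\vp \in C^\infty(\Omega)$ by Step 1 and $\vp \leq 0$, a non-vanishing gradient rules out $\vp(x_0) = 0$ (an interior zero of $\vp$ would be an interior maximum, hence critical), so $\vp(x_0) < 0$. If $x_0 \notin S_{sph}$, Step 2 gives $u_k/\beta_k \to \vp$ in $C^1$ near $x_0$, so any sequence $x_k \to x_0$ with $u_k(x_k) \to \infty$ satisfies $u_k(x_k)/\beta_k \to \vp(x_0) < 0$, i.e., $u_k(x_k) \to -\infty$, contradicting $u_k(x_k) \to \infty$. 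If $x_0 \in S_{sph}$, I apply a localized Pohozaev identity for $(-\D)^n u_k = Q_k e^{2nu_k}$ on $B_\rho(x_0)$ tested against $X = x-x_0$; the $C^1_{\loc}$-bound on $Q_k$ controls the volume term $\int(X\cdot\nabla Q_k)e^{2nu_k}\,dx = O(\rho)$, the interior term $\int Q_k e^{2nu_k}\,dx$ captures the bubble mass $\Lambda_1 N_0 = O(1)$, and the boundary terms, evaluated via $u_k \approx \beta_k\vp$ off the bubble, scale polynomially in $\beta_k$ with coefficients determined by derivatives of $\vp$ at $x_0$. Taking $k\to\infty$ at fixed $\rho$ forces the coefficient of the leading $\beta_k$-power to vanish; letting $\rho \to 0$, this translates to $\nabla\vp(x_0) = 0$, a contradiction.

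\textbf{Main obstacle.} The Pohozaev argument in the $S_{sph}$ sub-case of Step 3 is the most delicate step: identifying the precise boundary-flux term proportional to $|\nabla\vp(x_0)|^2$ against competing lower-order contributions from the Taylor expansion of $\vp$, and handling the possibility that several bubbles coalesce at $x_0$, requires a careful two-scale asymptotic analysis that couples the rescaled bubble limit \eqref{spherical} to the background profile $\beta_k\vp$.
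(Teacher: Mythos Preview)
Your architecture matches the paper's, but Step~2 has a genuine gap and Step~3 is underspecified in a way that matters.

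\textbf{Step 2: finite volume does not force sphericity.} You assert that once the rescaled limit $\eta$ satisfies \eqref{eq-R2n} with finite volume, ``by the classification\dots\ [it] is spherical as in \eqref{spherical}.'' In dimension $2n\ge 4$ this is false: Theorem~\ref{thmclas} explicitly allows $\eta=v+p$ with a non-constant upper-bounded polynomial $p$, and Example~1 in Section~\ref{Examples} shows precisely such non-spherical limits arising from sequences satisfying \eqref{eq-1}--\eqref{eq-2}. To force sphericity one needs the extra estimate $\int_{B_R}|\Delta\eta|\,dx\le CR^{2n-2}$. The paper obtains this by (i)~selecting the blow-up center $\bar x_k$ and scale $r_k$ via the max-procedure \eqref{def-maxima} with a tuned outer radius $\mu_k$ (e.g.\ $\mu_k=\beta_k^{-1/2}$), so that failure of $v_k/\beta_k\to 0$ in $C^{1,\alpha}$ forces $L_k\to\infty$ by Lemmas~\ref{lem1}--\ref{nabla2}; and then (ii)~verifying $r_k^2\beta_k\to 0$, which through the explicit Newtonian potential \eqref{def-vk} yields \eqref{est-laplacian}. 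Your phrase ``suitably normalized'' hides all of this, and without $r_k^2\beta_k\to 0$ the polynomial alternative cannot be excluded. Your closing sentence (``$L^p$-theory \dots\ upgrades the convergence'') is also not right: an $L^1$-small right-hand side for $(-\Delta)^n$ does not by itself give $C^{1,\alpha}$ bounds.

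\textbf{Step 3: the differential Pohozaev is not the clean route.} For the $S_{sph}$ sub-case you invoke the classical Pohozaev identity on $B_\rho(x_0)$, but the boundary flux for $(-\Delta)^n$ is a quadratic form in derivatives of $u_k$ up to order $2n-1$; since $u_k\approx\beta_k\vp$ on $\partial B_\rho$, the leading $\beta_k^2$ contribution is the Pohozaev boundary term for the polyharmonic $\vp$, which vanishes identically, so one must isolate the $\beta_k^1$ cross-term with the bubble correction---exactly the two-scale coupling you flag as the ``main obstacle'' and do not resolve. The paper sidesteps this entirely: it rewrites the equation as the integral equation \eqref{barv} with compactly supported weight $\bar Q_k=\psi Q_ke^{2nh_k}e^{2n\tilde v_k}$, applies the boundary-free identity \eqref{poho}, and obtains $2n\int(\nabla h_k)\psi Q_ke^{2nu_k}+O(1)=0$; since $\nabla h_k=\beta_k(\nabla\vp(x_0)+o(1))$ and the mass near $x_0$ is at least $\Lambda_1$, dividing by $\beta_k$ gives $\nabla\vp(x_0)=0$ in one line.

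Your Step~1 is workable but roundabout. The paper's global decomposition $u_k=v_k+h_k$ on all of $\Omega$ via \eqref{def-vk} gives $h_k/\beta_k\to\vp$ in $C^\ell_{\loc}(\Omega)$ directly from Lemma~\ref{polyharmonic}, with no need to patch local extensions across $S_1$ or to justify the $L^1$ bound on $u_k$ via Fatou across $S_\vp$.
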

 
 We remark that the $C^{1,\alpha}$ convergence in \eqref{conv-2} is sharp in the sense that  it is not  true in $C^2_{\loc}(\Omega\setminus S_{sph})$, see Example 1 in subsection \ref{Examples}.

As an immediate consequence of \eqref{conv-1}-\eqref{conv-2} we prove the following: 
\begin{cor}\label{cor} Let $(u_k)$ be a sequence of solutions to  \eqref{eq-1}, \eqref{eq-2} and \eqref{Q0}. Then \begin{itemize}\item[i)] $\dot S_{sph}:=S_\infty\setminus S_\vp\subseteq S_{sph}\quad\text{and }S_\infty\setminus S_{sph}\subseteq S_\vp.  $ \item[ii)] If the scalar curvature $R_{g_{u_k}}$ of the  conformal metric $g_{u_k}:=e^{2u_k}|dx|^2$ is uniformly bounded from below, then $S_\infty=S_{sph}$.  \end{itemize}
\end{cor}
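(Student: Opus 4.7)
The plan is to obtain part (i) as a direct consequence of the sharp $C^{1,\alpha}$ convergence \eqref{conv-2}, and to handle part (ii) by a blow-up and classification argument at any candidate non-spherical blow-up point. The whole package should be quite short: part (i) is essentially a contrapositive reading of \eqref{conv-2}, while part (ii) reduces (via (i)) to excluding non-spherical blow-up profiles, which the scalar curvature bound should rule out after passing to the rescaled limit.

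For (i), I would fix $x\in\Omega\setminus S_{sph}$. By \eqref{conv-2}, $u_k/\beta_k$ converges uniformly to $\vp$ on a neighborhood of $x$, and $\vp\le 0$. If $\vp(x)<0$, then for any sequence $x_k\to x$ one has $u_k(x_k)/\beta_k\to\vp(x)<0$, and since $\beta_k\to\infty$ this forces $u_k(x_k)\to-\infty$, so $x\notin S_\infty$. Therefore $x\in S_\infty\setminus S_{sph}$ forces $\vp(x)=0$, i.e.\ $x\in S_\vp$, giving $S_\infty\setminus S_{sph}\subseteq S_\vp$. The companion inclusion $S_\infty\setminus S_\vp\subseteq S_{sph}$ is the logical contrapositive of the same implication.

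For (ii), part (i) reduces the task to showing that every blow-up point $x\in S_\vp\cap S_\infty$ is actually spherical. The natural plan is to rescale: choose $x_k\to x$ realizing $u_k(x_k)=\max_{\overline{B_{r_0}(x)}}u_k\to\infty$, set $r_k:=e^{-u_k(x_k)}$, and define $\eta_k(y):=u_k(x_k+r_ky)-u_k(x_k)$. These satisfy $\eta_k\le 0$, $\eta_k(0)=0$ and $(-\D)^n\eta_k=Q_k(x_k+r_ky)e^{2n\eta_k}$; elliptic estimates and the uniform volume bound yield, along a subsequence, $\eta_k\to\eta$ in $C^{2n-1}_{\loc}(\R^{2n})$ for some non-trivial solution $\eta$ of \eqref{eq-R2n} (absorbing the value $Q_0(x)>0$ into the standard normalization $(2n-1)!$ by a constant shift of $\eta$). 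The crucial observation is that scalar curvature is an intrinsic invariant of the metric, and the metric $g_{\eta_k}$ in $y$-coordinates is just $g_{u_k}$ pulled back by the dilation, so $R_{g_{\eta_k}}(y)=R_{g_{u_k}}(x_k+r_ky)\ge -C_0$; passing to the limit gives a uniform lower bound $R_{g_\eta}\ge -C'$ on all of $\R^{2n}$.

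The main obstacle, and the step to be done carefully, is to conclude that such an $\eta$ must be spherical. I would appeal to the classification of entire solutions to \eqref{eq-R2n} (Lin for $n=2$, Martinazzi for $n\ge 3$): non-spherical solutions have $\eta(x)\to-\infty$ at polynomial rate, so $|\nabla\eta|^2$ blows up at infinity while $e^{-2\eta}$ diverges exponentially, and the conformal-change formula
\[
R_{g_\eta}=-2(2n-1)\,e^{-2\eta}\bigl(\D\eta+(n-1)|\nabla\eta|^2\bigr)
\]
then drives $R_{g_\eta}(x)\to-\infty$ as $|x|\to\infty$, contradicting the uniform lower bound just established. Hence $\eta$ must be a standard bubble of the form \eqref{spherical}, and the chosen rescaling witnesses $x\in S_{sph}$, which closes (ii).
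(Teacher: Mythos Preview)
Your argument for part (i) is correct and is exactly what the paper does: both inclusions are read off directly from the uniform convergence in \eqref{conv-2}.

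For part (ii) your approach diverges from the paper's and has a genuine gap. The step ``elliptic estimates and the uniform volume bound yield $\eta_k\to\eta$ in $C^{2n-1}_{\loc}(\R^{2n})$'' is not justified for a $2n$-th order equation: there is no maximum principle to turn $\eta_k\le 0$, $\eta_k(0)=0$ into a local lower bound on $\eta_k$. In this paper the corresponding compactness (see Step~1 of the proof of \eqref{conv-2}, Lemma~\ref{2.6}, and the Remark following the proof of Theorem~\ref{thm3}) goes through the decomposition $u_k=v_k+h_k$ together with the Laplacian estimate \eqref{est-laplacian}, and that estimate needs $r_k^2\beta_k$ bounded, equivalently $u_k(x_k)/\beta_k\not\to 0$. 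Precisely at points $x\in S_\vp$ this is the issue: $\vp(x)=0$, so one has no control on $r_k^2\beta_k$, and the rescaled polyharmonic part contributes a term of order $r_k^2\beta_k\,\nabla^2\vp(x)[y,y]$ which can drive $\eta_k\to-\infty$ on open sets. A secondary problem is that your maximizer $x_k$ on $\overline{B_{r_0}(x)}$ need not converge to $x$, since near $S_\vp$ the blow-up set $S_\infty$ is in general not isolated.

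The paper's route is much shorter and sidesteps all of this: it uses the scalar-curvature inequality \emph{before} any rescaling, on a ball $B_\ve(x_0)\Subset\Omega\setminus(S_1\cup S_\vp)$. There $u_k\to-\infty$, so $e^{2u_k}\to 0$, while by \eqref{conv-1}
\[
\D u_k+(n-1)|\nabla u_k|^2=\beta_k\,\D\vp+(n-1)\beta_k^{2}|\nabla\vp|^{2}+o(\beta_k^{2})\quad\text{on }B_\ve(x_0).
\]
The bound $R_{g_{u_k}}\ge -C$ then forces $\nabla\vp\equiv 0$ on $B_\ve(x_0)$, hence $\vp\equiv\text{const}<0$ on $\Omega$ by unique continuation. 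In particular $S_\vp=\emptyset$, and part (i) immediately gives $S_\infty=S_{sph}$. So the case $x\in S_\vp\cap S_\infty$ you set out to analyze is in fact vacuous under the hypothesis of (ii); the blow-up and classification machinery is not needed here.
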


As we have already mentioned that the quantization result of Theorem \ref{RM} is not true without the additional assumption \eqref{cond-uk}. However, it turns out that  a quantization result still holds  if we stay outside the  zero set $S_\vp$.  More precisely, we have:
\begin{thm}\label{thm2} Let $(u_k)$ be a sequence of solutions to \eqref{eq-1}, \eqref{eq-2} and \eqref{Q0}.      Then the set    $\dot S_{sph}:=S_\infty\setminus S_\vp$ is finite, and  up to a subsequence,  \begin{align}\label{NLambda} Q_ke^{2nu_k} \rightharpoonup\sum_{x^\ell\in\dot S_{sph}}N_\ell\Lambda_1\delta_{x^{(\ell)}}\quad\text{in }\Omega\setminus S_\vp,\end{align} in the sense of measures, where  $N_\ell\in\mathbb{N}$, and the above sum is considered to be $0$ if $\dot S_{sph}=\emptyset$.   Moreover, if $(Q_k)$ is bounded in $C^1_{loc}(\Omega)$   then for every $x^\ell\in \dot S_{sph}$  we have \begin{align}\label{uk=betak} u_k(x_k^{(\ell)}):= \max_{B_{\delta_0}(x^{(\ell)})}u_k=-\beta_k(\vp(x^{(\ell)})+o(1)),\quad\quad o(1)\xrightarrow{k\to\infty}0,\end{align}   for some  $\delta_0>0$.  
Finally, if  the Hessian $\nabla^2\vp(x^{(\ell)})$ is strictly positive definite for some $x^\ell\in \dot S_{sph}$, and  $Q_k\equiv 1$   then for $k$ large \begin{align}\label{vol-lower} \int_{B_{\delta_0}(x^{(\ell)})}e^{2nu_k}dx \geq \Lambda_1+c_0u_k(x_k^{(\ell)})e^{-2u_k(x_k^{(\ell)})},\end{align} for some $c_0>0$. 
\end{thm}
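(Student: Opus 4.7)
\emph{Finiteness of $\dot S_{sph}$ and quantization.} The plan is to localize near each point of $\dot S_{sph}$, using the strong convergence from Theorem~\ref{ARSM}(ii) and Theorem~\ref{thm1} on the complement and running a Green's-function plus bubble-tree analysis inside. By Theorem~\ref{ARSM}(ii), $u_k/\beta_k\to\vp<0$ in $C^{2n-1,\alpha}_{\loc}(\Omega\setminus(S_1\cup S_\vp))$, so $u_k\to-\infty$ there and $S_\infty\subseteq S_1\cup S_\vp$. Hence $\dot S_{sph}\subseteq S_1$, which is finite by the uniform volume bound \eqref{eq-2}. For a fixed $x^{(\ell)}\in\dot S_{sph}$, continuity of $\vp\in\K(\Omega,\emptyset)$ together with $\vp(x^{(\ell)})<0$ lets me pick $\delta_0$ with $\overline{B_{\delta_0}(x^{(\ell)})}\cap(S_1\cup S_\vp)=\{x^{(\ell)}\}$; on $B_{\delta_0}\setminus\{x^{(\ell)}\}$ then $u_k\to-\infty$ uniformly on compacta. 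This is exactly the input needed to run the local bubble-tree and neck analysis of \cite{Mar2,Rob2}: iteratively extract spherical bubbles (each carrying $Q$-mass $\Lambda_1$), a procedure that terminates in finitely many steps and delivers $N_\ell\in\mathbb N$ and the weak convergence \eqref{NLambda}.

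\emph{Peak value.} Assume $(Q_k)$ is bounded in $C^1_{\loc}$, so Theorem~\ref{thm1}\,\eqref{critical} gives $\nabla\vp(x^{(\ell)})=0$. Decompose $u_k=v_k+h_k$ on $B_{\delta_0}(x^{(\ell)})$ where $h_k$ is the $n$-harmonic function matching the Dirichlet jet of $u_k$ on $\partial B_{\delta_0}$ and $v_k$ is the corresponding Boggio potential, $(-\D)^n v_k=Q_k e^{2nu_k}$ with zero Dirichlet data. Polyharmonic regularity upgrades the boundary convergence $u_k/\beta_k\to\vp$ in $C^{2n-1,\alpha}$ to $h_k/\beta_k\to\vp$ uniformly on $\overline{B_{\delta_0}}$; in particular $h_k(x_k^{(\ell)})=\beta_k\vp(x^{(\ell)})+o(\beta_k)$. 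Writing the Green's function as $G(x,y)=\tfrac{2}{\Lambda_1}\log\tfrac{1}{|x-y|}+R(x,y)$ with $R$ smooth, and using the rescaling $y=x_k^{(\ell)}+r_k z$ with $r_k\sim e^{-u_k(x_k^{(\ell)})}$, the concentration of $Q_k e^{2nu_k}$ inside $v_k(x_k^{(\ell)})=\int G(x_k^{(\ell)},y)Q_k(y)e^{2nu_k(y)}\,dy$ produces $v_k(x_k^{(\ell)})=2u_k(x_k^{(\ell)})+O(1)$. Substituted into $u_k(x_k^{(\ell)})=v_k(x_k^{(\ell)})+h_k(x_k^{(\ell)})$, this gives \eqref{uk=betak}.

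\emph{Volume lower bound, and main obstacle.} With $Q_k\equiv 1$ and $\nabla^2\vp(x^{(\ell)})>0$, the non-degeneracy $\vp(x)\ge\vp(x^{(\ell)})+c|x-x^{(\ell)}|^2$ holds near $x^{(\ell)}$. I would split $\int_{B_{\delta_0}}e^{2nu_k}dx$ into an inner disk $B_{Rr_k}(x_k^{(\ell)})$, which contributes $\Lambda_1+o(1)$ (as $k\to\infty$, then $R\to\infty$) by convergence to a standard spherical solution, and an annulus where the decomposition $u_k=\beta_k\vp+w_k$ together with sharp $w_k$-estimates from Step~2 and the quadratic lower bound on $\vp$ provides the remaining positive contribution of order $c_0\,u_k(x_k^{(\ell)})e^{-2u_k(x_k^{(\ell)})}$; optimizing the transition scale produces the advertised factor. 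The hardest technical step is the Green's-potential computation $v_k(x_k^{(\ell)})=2u_k(x_k^{(\ell)})+O(1)$ in Step~2 when $N_\ell\ge 2$: one must verify that the subordinate bubbles of the tree contribute only $O(1)$ to the Green's potential at the highest peak, which rests on the hierarchical scale structure furnished by the analysis of \cite{Mar2,Rob2}.
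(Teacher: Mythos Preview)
Your finiteness argument for $\dot S_{sph}$ is fine, but two of the three parts have genuine gaps.

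\textbf{Quantization \eqref{NLambda}.} You cannot simply ``run the analysis of \cite{Mar2,Rob2}'': Theorem~\ref{RM} requires the extra hypothesis \eqref{cond-uk}, i.e.\ an $L^1$ bound on $\D u_k$, which is \emph{not} available here. Without it, rescaled limits at a peak need not be spherical (cf.\ Theorem~\ref{thmclas}), so even the bubble extraction is unjustified. The paper supplies the missing ingredient in two steps. First, at each extracted peak $x_{i,k}$ it shows $u_k(x_{i,k})/\beta_k\not\to 0$ (using $\vp(x_0)<0$ and the integral formula \eqref{def-vk} for $v_k$), hence $r_{i,k}^2\beta_k\to 0$; this kills the $\D h_k$ contribution after rescaling and forces spherical limits (Lemma~\ref{2.6}, estimate \eqref{est-laplacian}). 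Second, for the neck/quantization step it rescales by the \emph{macroscopic} factor $\beta_k^{-1}$: the functions $w_{i,k}(x)=u_k(x_{i,k}+\beta_k^{-1}x)-\log\beta_k$ then satisfy $\int_{B_R}|\D w_{i,k}|\,dx\le C$, so Theorem~\ref{RM} legitimately applies to $(w_{i,k})$. This $\beta_k^{-1}$ rescaling is the key device and is absent from your outline.

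\textbf{Volume lower bound \eqref{vol-lower}.} Your inner-disk/annulus splitting is too vague to produce the precise excess $c_0\,u_k(x_k^{(\ell)})e^{-2u_k(x_k^{(\ell)})}$; it would require pointwise control of $u_k$ on the annulus far sharper than anything Step~2 yields. The paper instead uses a Pohozaev identity for the integral equation satisfied by the local log-potential (Lemma~\ref{poho2}), which gives
\[
\frac{\lambda_k}{\Lambda_1}(\lambda_k-\Lambda_1)=\int_{B_\delta}(x-\xi_k)\cdot\nabla \bar h_k(x)\,e^{2nu_k}\,dx-(\text{boundary term}).
\]
Positive-definiteness of $\nabla^2\vp(x^{(\ell)})$ gives $(x-\xi_k)\cdot\nabla\bar h_k(x)\ge c_1\beta_k|x-\xi_k|^2$, and after the bubble rescaling the right-hand side is $\gtrsim \beta_k r_k^2\sim u_k(x_k)e^{-2u_k(x_k)}$. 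This Pohozaev mechanism is the essential tool; there is no indication a direct annular integration can replace it.

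\textbf{Minor issue with \eqref{uk=betak}.} Your claim $v_k(x_k^{(\ell)})=2u_k(x_k^{(\ell)})+O(1)$ is too strong: even for $N_\ell=1$ the error is $(\lambda_k-\Lambda_1)\cdot u_k(x_k)=o(\beta_k)$, and for $N_\ell\ge 2$ the other peaks contribute $\sim 2\log(1/|x_{i,k}-x_{j,k}|)$, which by the paper's Lemma~\ref{gap} is $O(\log\beta_k)$, not $O(1)$. Fortunately $o(\beta_k)$ is all that is needed for \eqref{uk=betak}, so this is easily repaired; but the ``hardest technical step'' you flag (subordinate bubbles contributing $O(1)$) is both false and unnecessary.
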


Theorem \ref{thm2} contains Theorem \ref{RM} as \eqref{cond-uk} implies that $S_\vp=\emptyset$. Our first proof of \eqref{NLambda} is based on  Theorem \ref{RM}. Under an additional assumption, namely  $(Q_k)$ is bounded in $C^1_{loc}(\Omega)$,  we give a direct  proof (without using Theorem \ref{RM}). In this case we also derive a lower bound of the distances  between the locations of   ``peaks'' at a   spherical blow-up point $x^\ell$ with $N_\ell>1$, see Lemma \ref{gap}

In our next theorem we give a sufficient condition on the poly-harmonic function $\vp$ to rule out  the possibility of collapsing  multiple spherical bubbles at a blow-up point in $S_\infty\setminus S_{\vp}$.  

\begin{thm}\label{thm3} Let $(u_k)$ be a sequence of solutions to \eqref{eq-1}, \eqref{eq-2} and \eqref{Q0}. Assume that $(Q_k)$ is bounded in $C^1_{loc}(\Omega)$.   If the Hessian $\nabla^2\vp(x_0)$ is strictly negative definite for some $x_0\in S_\infty$, then there exists $\delta>0$  such that  
$$ \limsup_{k\to\infty}\int_{B_\delta(x_0)}Q_ke^{2nu_k}dx\leq \Lambda_1.$$ In particular, if $x_0\in S_\infty\setminus S_\vp$, then $$ \limsup_{k\to\infty}\int_{B_\delta(x_0)}Q_ke^{2nu_k}dx= \Lambda_1.$$
\end{thm}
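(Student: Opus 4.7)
The plan is to show that the strict negative-definite Hessian of $\vp$ at $x_0$ permits at most one spherical bubble to form there, so that the total mass in a small ball around $x_0$ is at most $\Lambda_1$. The Hessian condition yields a quadratic upper bound
$$\vp(x)\leq \vp(x_0)-c|x-x_0|^2 \quad \text{for } x\in B_{\delta_0}(x_0),$$
for some $c,\delta_0>0$, so $x_0$ is a strict isolated local maximum of $\vp$ and, if $\vp(x_0)=0$, an isolated point of $S_\vp$. I would split the argument into cases (a) $x_0\in S_\infty\setminus S_\vp$ and (b) $x_0\in S_\infty\cap S_\vp$.

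For case (a), Corollary \ref{cor}(i) places $x_0\in S_{sph}$, and Theorem \ref{thm2} gives $\int_{B_\delta(x_0)}Q_ke^{2nu_k}\,dx\to N_\ell\Lambda_1$ with $N_\ell\in\mathbb N$, so it suffices to rule out $N_\ell\geq 2$. Assuming $N_\ell\geq 2$, pick peak sequences $x_k^{(1)},x_k^{(2)}\to x_0$ satisfying $u_k(x_k^{(j)})=-\beta_k\vp(x_0)+o(\beta_k)$ by \eqref{uk=betak} and separated by $d_k:=|x_k^{(1)}-x_k^{(2)}|\gg e^{-u_k(x_k^{(j)})}$ via the gap lemma referenced after Theorem \ref{thm2}. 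The rescaling $v_k(y):=u_k(x_k^{(1)}+d_ky)+\log d_k$ satisfies the same Liouville equation with coefficient $Q_k(x_k^{(1)}+d_ky)$, its peaks lie at unit separation, and a subsequential limit supports at least two bubbles at distinct finite points. On the intermediate neck $\{\rho_k\le |x-x_0|\le R\rho_k\}$ with $\rho_k\gg\max_je^{-u_k(x_k^{(j)})}$, the Hessian bound forces $\beta_k\vp(x)\sim -\beta_k|\vp(x_0)|-c\beta_k|x-x_0|^2$, while the $C^{1,\alpha}$ control of Theorem \ref{thm1} yields $u_k\approx \beta_k\vp(x)$ there. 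Matching this background against the tails $-2\sum_j\log|x-x_k^{(j)}|$ produced by two bubbles leads to an overdetermined scaling condition; two peaks at the same height $-\beta_k\vp(x_0)$ cannot simultaneously sit inside the parabolic well dictated by the Hessian, forcing $N_\ell=1$ and hence mass equal to $\Lambda_1$, which gives the ``In particular'' claim.

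For case (b), $\vp(x_0)=0$ and Theorem \ref{thm2} does not apply directly, but the Hessian makes $x_0$ an isolated point of $S_\vp$. In the punctured ball $B_{\delta_0}(x_0)\setminus\{x_0\}$ the convergence $u_k/\beta_k\to\vp$ of Theorem \ref{thm1} holds locally, so the same neck-region analysis as in case (a) goes through unchanged; again at most one bubble can form at $x_0$, yielding the upper bound $\limsup\int_{B_\delta(x_0)}Q_ke^{2nu_k}\,dx\le\Lambda_1$.

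I expect the principal obstacle to be the quantitative matching step that rules out $N_\ell\ge 2$: translating the Hessian condition into incompatibility with two well-separated peaks at equal height requires sharp control of sub-leading terms in the asymptotic expansion of $u_k$. A cleaner route may be via a Pohozaev identity: multiplying $(-\Delta)^nu_k=Q_ke^{2nu_k}$ by $(x-x_0)\cdot\nabla u_k$ and integrating over $B_\delta(x_0)$ produces the interior mass (up to a lower-order contribution from $\nabla Q_k$ controlled by the $C^1_{loc}$ assumption on $Q_k$) in terms of boundary derivatives of $u_k$ on $\partial B_\delta$; since $u_k/\beta_k\to\vp$ in $C^{1,\alpha}$ there, one can divide by $\beta_k^2$ and pass to the limit to read off a boundary quantity in $\vp$ whose bound via the negative-definite Hessian yields exactly the constant $\Lambda_1$. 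The delicate technical point is extracting precisely this sharp constant from the identity.
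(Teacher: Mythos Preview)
Your primary route --- the neck-region matching --- is not a proof as written. The claim that ``two peaks at the same height $-\beta_k\vp(x_0)$ cannot simultaneously sit inside the parabolic well'' is exactly the content of the theorem, and nothing in your sketch (rescaling by $d_k$, comparing logarithmic tails to $\beta_k\vp$) singles out the negative-definite Hessian as the obstruction; indeed Examples~2 and~3 in the paper show that the conclusion fails when $\nabla^2\vp(x_0)=0$, so whatever ``overdetermined scaling condition'' you have in mind must use the sign of the Hessian in an essential way, and that step is missing.

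Your Pohozaev suggestion is the right tool, but the setup you describe cannot yield the bound. If you test the PDE against $(x-x_0)\cdot\nabla u_k$ on $B_\delta(x_0)$, the boundary integrals involve products of derivatives of $u_k$ up to order $2n-1$; on $\partial B_\delta$ these are all of size $\beta_k$ (since $u_k/\beta_k\to\vp$ in $C^{2n-1,\alpha}$ there), so the boundary side is of order $\beta_k^2$. Dividing by $\beta_k^2$ and passing to the limit gives only a Pohozaev identity for the polyharmonic function $\vp$ --- a tautology --- while the mass $\lambda_k$, which is $O(1)$, disappears. You never see the quantity $\lambda_k(\lambda_k-\Lambda_1)$ this way.

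The paper instead applies an \emph{integral} Pohozaev identity (Lemma~\ref{poho2}) to the local Newtonian potential $\bar v_k$, with weight $\bar Q_k=\psi Q_k e^{2nh_k}e^{2n\tilde v_k}$, so that $u_k=\bar v_k+h_k+\tilde v_k$ and the identity reads, for any center $\xi$,
\[
\frac{\lambda_k}{\Lambda_1}(\lambda_k-\Lambda_1)=\frac{1}{2n}\int(x-\xi)\cdot\nabla\bar Q_k\,e^{2n\bar v_k}\,dx+\text{(boundary)}.
\]
The point is that $\nabla\bar Q_k$ contains $2n(\nabla h_k)\bar Q_k$ as its dominant part. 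Now the Hessian hypothesis is used not on the boundary but in the interior: choosing $\xi=x_k$ to be the point where $h_k$ attains its maximum on $\bar B_{2\delta}(x_0)$ (so $\nabla h_k(x_k)=0$), the negative-definiteness of $\nabla^2\vp$ and the convergence $h_k/\beta_k\to\vp$ give
\[
(x-x_k)\cdot\nabla h_k(x)\le -c_1\beta_k|x-x_k|^2,
\]
which makes the main interior term \emph{non-positive} and in fact dominates the $O(1)$ remainder coming from $\nabla(\psi Q_ke^{2n\tilde v_k})$. One obtains $\frac{\lambda_k}{\Lambda_1}(\lambda_k-\Lambda_1)\le o(1)$ directly, hence $\limsup\lambda_k\le\Lambda_1$. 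This treats your cases (a) and (b) simultaneously --- only \eqref{19} is needed, and that follows from the Hessian making $x_0$ an isolated maximum of $\vp$ --- so there is no need to invoke Theorem~\ref{thm2} or Lemma~\ref{points} for the first assertion.
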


The assumption on $\nabla^2\vp(x_0)$ in Theorem \ref{thm3} is necessary as it is not  true if  $\nabla^2 \vp(x_0)=0$, see Examples 2, 3 in subsection \ref{Examples}.   

\medskip 

Now we move on to the non-local case in dimension $3$.  More precisely, we shall consider the non-local equation $(-\D)^\frac 32u_k=Q_ke^{3u_k}$ on a bounded domain  $\Omega\subset\R^3$.  The operator $(-\D)^\frac32$ can be understood as a Dirichlet-to-Neuman map via bi-harmonic extension on the upper-half space $\R^4_+$. For a precise definition, and notations see Section \ref{section-nonlocal}.  

\begin{thmx}[\cite{DGHM}] \label{thmC} Let $\Omega$ be a smooth bounded domain in $\R^3$.   Let $(U_k)\subset C^0(\overline{\R^4_+})$ be a sequence of representable  solutions to 
 \begin{equation}\label{eq-1ext}\left\{ \begin{split}
  \D^ 2 U_k=0 \quad \text{ in }\R^4_+,\\
   \partial_t U_k=0 \quad \text{ in }\R^3,\\
    \mathcal L_{\frac{3}{2}}U_k=Q_ke^{3u_k} \quad\text{on }  \Omega,
  \end{split}\right.
 \end{equation}
where $u_k:=U_k|_{t=0}$ is the boundary data and $Q_k\in C^0(\overline{\Omega})$ is uniformly bounded in $L^\infty(\Omega)$.
We assume that
\begin{align}\label{cond-vol}
 \int_{\Omega}e^{3u_k}\,dx\leq C.
\end{align}
and
\begin{equation}\label{extra-assumption}
\int_{\R^3}\frac{u_k^+(x)}{1+|x|^6}\,dx\leq C.
\end{equation}
Set
$$S_1:=\left\{ x\in\Omega:\lim_{\ve\to 0^+}\liminf_{k\to\infty}\int_{B_\ve(\bar x)}|Q_k|e^{3u_k}\,dx\geq \frac{\Lambda_1}{2} \right\},\quad \Lambda_1=2| S^3|=4\pi^2.$$
Then $S_1$ is a finite set and, up to a subsequence, one of the following is true:
\begin{itemize}
 \item [(i)] $U_k\to U_\infty$ in $C^{2,\alpha}_{loc}(\R^4_+\cup (\Omega\setminus S_1))$ for any $\alpha\in [0,1)$,

 \item [(ii)] There exists $\Phi\in \K(\Omega)$ and numbers $\beta_k\to\infty$ such that
 $$\frac{U_k}{\beta_k}\to\Phi\quad\text{in }C^{2,\alpha}_{\loc}((\R^4_+\cup \Omega)\setminus S),\quad S=S_\Phi \cup S_1,$$
 where
$S_\Phi:=\{x\in \Omega:\Phi((x,0))=0\}$.
Moreover $S_\Phi$ has dimension at most $2$.
\end{itemize}
\end{thmx}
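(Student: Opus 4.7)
\medskip

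\noindent\emph{Proof proposal.} The plan is to mimic, in the biharmonic extension setting, the ARS-Martinazzi argument used for Theorem~\ref{ARSM}, with $\D^2$ on $\R^4_+$ subject to the Neumann condition $\partial_t U_k=0$ and the nonlocal boundary equation $\mathcal L_{3/2}U_k=Q_ke^{3u_k}$ playing the role of $(-\D)^n$. Finiteness of $S_1$ is immediate from \eqref{cond-vol}: since $\sup_k\int_\Omega |Q_k|e^{3u_k}\,dx<\infty$, there can be only finitely many points carrying mass at least $\Lambda_1/2$.

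Next I would establish the crucial $\ve$-regularity statement away from $S_1$. For $\bar x\in\Omega\setminus S_1$ one can choose a small ball $B_r(\bar x)\Subset\Omega\setminus S_1$ on which the localized total curvature stays strictly below $\Lambda_1/2$. Using representability together with the growth control \eqref{extra-assumption}, I would decompose $U_k=V_k+H_k$, where $V_k$ is a biharmonic-extension Green potential generated by the localized source $Q_ke^{3u_k}\chi_{B_r(\bar x)}$ and $H_k$ is biharmonic on $\R^4_+$ with Neumann data whose singular part lies outside $B_r(\bar x)$, hence regular near $\bar x$. A Br\'ezis--Merle-type inequality for the kernel of $(-\D)^{3/2}$ on $\R^3$ (with the critical exponent given by $\Lambda_1/2$) then yields $e^{3u_k}\in L^p_{loc}(B_{r/2}(\bar x))$ for some $p>1$, uniformly in $k$.

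Third, I would run the dichotomy. Let $\bar u_k$ be the average of $u_k$ on a fixed ball contained in $\Omega\setminus S_1$. If $\bar u_k$ stays bounded, the $L^p$ estimate on $e^{3u_k}$ combined with $L^p$ regularity for the extension problem and standard bootstrap yields $C^{2,\alpha}_{loc}$ convergence of $U_k$ on $\R^4_+\cup(\Omega\setminus S_1)$, i.e.\ alternative (i). The $L^p$ control rules out $\bar u_k\to+\infty$, so the remaining case is $\bar u_k\to-\infty$. Setting $\beta_k:=-\bar u_k$, the normalized sequence $\Phi_k:=U_k/\beta_k$ enjoys uniform local $C^{2,\alpha}$ estimates (by the same Br\'ezis--Merle decomposition applied to $\Phi_k$, for which the right-hand side vanishes in the limit), so up to subsequence $\Phi_k\to\Phi$ in $C^{2,\alpha}_{loc}((\R^4_+\cup\Omega)\setminus S)$, with $\D^2\Phi=0$ on $\R^4_+$, $\partial_t\Phi=0$ on $\R^3$, $\mathcal L_{3/2}\Phi=0$ on $\Omega\setminus S_1$, and $\Phi|_{\R^3}\leq 0$, $\Phi\not\equiv 0$. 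Thus $\Phi\in\K(\Omega)$.

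Finally, for the dimension bound: the boundary trace $\vp:=\Phi|_{\R^3}$ is non-positive, non-trivial, and satisfies the nonlocal harmonic equation $(-\D)^{3/2}\vp=0$ on $\Omega\setminus S_1$, which via the extension inherits real analyticity away from $S_1$. Standard nodal-set results for real-analytic functions in $\R^3$ give $\dim_{\mathcal H}S_\vp\leq 2$. The hardest step, as I see it, is this last one: extracting a \emph{uniform} codimension-one bound requires an analyticity/unique-continuation argument tailored to the half-space biharmonic problem with mixed Neumann--pseudodifferential boundary conditions, rather than a straightforward application of harmonic nodal-set theory. A secondary technical difficulty is making the Br\'ezis--Merle decomposition in Step~2 compatible with the global growth assumption \eqref{extra-assumption}, since the representation formula involves a non-compactly supported kernel on $\R^3$.
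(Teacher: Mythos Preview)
Theorem~\ref{thmC} is quoted from \cite{DGHM} and is not proven in this paper, so there is no proof here against which to compare your proposal directly. That said, the paper does expose part of the \cite{DGHM} machinery in the proof of Lemma~\ref{4.1} (and in the parallel local argument for \eqref{conv-1}), and this lets one see where your sketch deviates. The decomposition used is \emph{global}: $U_k=V_k+H_k$ with $V_k$ the logarithmic potential \eqref{def-Vk} taken over all of $\Omega$, not localized to a small ball as you propose, and $H_k$ is then evenly reflected across $\{t=0\}$ to become biharmonic on $\R^4\setminus(\Omega^c\times\{0\})$. The dichotomy is run on $\beta_k:=\int_{B_{R_0}(x_0)}|h_k|\,dx$ (the $L^1$ norm of the polyharmonic part on a fixed ball) rather than on the average of $u_k$; compactness for $H_k/\beta_k$ then comes from a polyharmonic mean-value/Lemma~\ref{polyharmonic}-type argument applied globally, which delivers $\Phi$ smooth across all of $\Omega$ at once. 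Your localized decomposition would instead require a separate gluing step to patch the local limits into a single $\Phi\in\K(\Omega)$. Apart from these structural differences, your overall strategy---finiteness of $S_1$ from the mass bound, a Br\'ezis--Merle $\varepsilon$-regularity step, a bounded/unbounded dichotomy, and real analyticity for the nodal-set dimension---is the expected one and matches what the paper indicates about the \cite{DGHM} argument. Your identification of the two delicate points (the codimension-one nodal bound for $S_\Phi$ via analyticity of the extended biharmonic function, and reconciling the global Poisson kernel with the growth hypothesis \eqref{extra-assumption}) is accurate.
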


We improve Theorem \ref{thmC} by showing the following theorem:

\begin{thm}\label{thm-nonlocal} Let $\Omega$ be a smooth bounded domain in $\R^3 $.   Let $(U_k)\subset C^0(\overline{\R^4_+})$ be a sequence of representable  solutions to \eqref{eq-1ext}, \eqref{cond-vol} and \eqref{extra-assumption}  for some $$Q_k\to Q_0>0\quad\text{in }C^0(\bar\Omega),\quad \|\nabla Q_k\|_{C^0(\bar \Omega)}\leq C.$$   Assume that we are in case $ii)$ of Theorem \ref{thmC}. Then there is a finite set (possibly empty) $\{x^1,\dots, x^N\}\subset S_1\cap (\Omega\setminus S_\Phi)$ such that    \begin{align}\label{conv-23}  \frac{U_k}{\beta_k}\to\Phi\quad\text{in }C^{1,\alpha}_{loc} ((\R^4_+\cup \Omega)\setminus \{x^1,\dots,x^N\}),\quad 0\leq \alpha<1,\end{align} and  \begin{align}\label{quant-24} Q_ke^{3u_k}dx  \rightharpoonup\sum_{\ell=1}^NN_\ell 4\pi^2\delta_{x^{(\ell)}}\quad\text{in }\Omega\setminus S_\Phi,\end{align} in the sense of measures, where  $N_\ell\in\mathbb{N}$.    \end{thm}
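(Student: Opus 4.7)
The plan is to transpose the strategy developed for the local case (Theorems \ref{thm1} and \ref{thm2}) to the non-local extension setting, exploiting the bi-harmonic structure of $U_k$ in $\R^4_+$ and the Neumann-type condition on $\{t=0\}$. Throughout, let $S_\infty$ denote the blow-up set of $(u_k)$ in $\Omega$ and set $\dot S_{sph}:=S_\infty\setminus S_\Phi$.

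\emph{Step 1: finiteness of $\dot S_{sph}$.} By case $ii)$ of Theorem \ref{thmC}, $U_k/\beta_k\to\Phi$ in $C^{2,\alpha}_{\loc}$ on $(\R^4_+\cup\Omega)\setminus S$, so on the open set $\Omega\setminus S_\Phi$ one has $u_k\to-\infty$ locally uniformly. A standard $\varepsilon$-regularity argument for the non-local equation (via the bi-harmonic extension) then forces any $x_0\in\dot S_{sph}$ to lie in $S_1$ and to carry mass at least $\Lambda_1/2$ in the defect measure $\lim_k Q_ke^{3u_k}dx$; combined with the volume bound \eqref{cond-vol}, this yields $|\dot S_{sph}|<\infty$. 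Relabel the set as $\{x^{(1)},\dots,x^{(N)}\}$.

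\emph{Step 2: bubble extraction and quantization.} Fix $x^{(\ell)}\in\dot S_{sph}$. Since $\Phi(x^{(\ell)})<0$, I can select a sequence of maxima $x_k^{(\ell)}\to x^{(\ell)}$ of $u_k$ in a fixed small ball; setting $r_k=e^{-u_k(x_k^{(\ell)})}\to 0$, I rescale
$$V_k(X):=U_k(x_k^{(\ell)}+r_k X)-u_k(x_k^{(\ell)}),\qquad X\in\R^4_+.$$
Then $V_k$ solves the same extension system on larger and larger domains, with boundary curvature $Q_k(x_k^{(\ell)}+r_k\cdot)\to Q_0(x^{(\ell)})>0$. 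The tail bound \eqref{extra-assumption} together with the representability of $U_k$ yields a uniform bound for $V_k^+$, allowing passage to a limit $V_\infty$ whose trace solves \eqref{eq-R2n} with $m=3$; by the classification recalled in the introduction this trace is a spherical solution and carries exactly $\Lambda_1=4\pi^2$ of curvature. One then iterates a standard bubble-extraction procedure at $x^{(\ell)}$ (see the local analog in Theorem \ref{thm2}, and \cite{DGHM} for the non-local tools): subtract the bubble already found, pick a new local maximum, rescale, and repeat. A Pohozaev-type identity for the bi-harmonic extension, applied to annular regions between consecutive scales, rules out residual curvature on the necks, so after finitely many iterations one obtains an integer multiplicity $N_\ell\in\mathbb N$ and \eqref{quant-24}.

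\emph{Step 3: $C^{1,\alpha}$ convergence across $S_\Phi$.} Outside $S_\Phi\cup\{x^{(1)},\dots,x^{(N)}\}$ the convergence is already $C^{2,\alpha}_{\loc}$ by Theorem \ref{thmC}. Near a point $\bar x\in S_\Phi\setminus\{x^{(1)},\dots,x^{(N)}\}$ no bubbling occurs, so the curvature integrated on small half-balls tends to $0$. Writing $U_k/\beta_k$ via the Poisson-type representation associated to the bi-harmonic extension and differentiating once yields a kernel that is integrable against a vanishing-mass measure; this gives uniform $C^{1,\alpha}$ estimates on compact half-ball neighbourhoods of $\bar x$. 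The Arzel\`a--Ascoli theorem, together with the already-known $C^0$ convergence to $\Phi$ and uniqueness of the limit, upgrades the convergence to $C^{1,\alpha}_{\loc}$ away from $\{x^{(1)},\dots,x^{(N)}\}$, which is \eqref{conv-23}.

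The main obstacle I expect is Step 2, the bubble-extraction/no-neck analysis at each $x^{(\ell)}$: in the non-local framework the only global information on $u_k$ is the one-sided tail bound \eqref{extra-assumption} on $u_k^+$, so neck regions (being deeply negative) are delicate to control and require a carefully tailored Pohozaev identity for the bi-harmonic extension in order to exclude residual concentration and secure the sharp integer quantization $N_\ell$. This is the technical heart of the argument, and where adapting the local proof of Theorem \ref{thm2} together with the non-local identities of \cite{DGHM} is essential.
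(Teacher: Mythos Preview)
Your Step 3 contains a genuine gap. You assert that ``near a point $\bar x\in S_\Phi\setminus\{x^{(1)},\dots,x^{(N)}\}$ no bubbling occurs, so the curvature integrated on small half-balls tends to $0$.'' This is false in general: points of $S_\Phi$ can lie in $S_\infty$ and can carry positive defect mass (the local analogue is Example~3 in Section~\ref{Examples}, where $S_{sph}=S_{\bar\varphi}=\{0\}$ and the limiting mass is $\Lambda>\Lambda_1$). Consequently your ``kernel integrable against a vanishing-mass measure'' argument does not apply at such points, and you have no mechanism to upgrade convergence to $C^{1,\alpha}$ across $S_\Phi$.

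The paper's route to \eqref{conv-23} is different and does not rely on any smallness of the curvature near $S_\Phi$. One writes $U_k=V_k+H_k$ with $V_k$ given by the logarithmic potential \eqref{def-Vk} and $H_k$ bi-harmonic with $H_k/\beta_k\to\Phi$ in $C^\ell_{\loc}$; the task is then to show $V_k/\beta_k\to 0$ in $C^{1,\alpha}_{\loc}$ away from spherical blow-up points. This is done by contradiction exactly as in the proof of \eqref{conv-2}: if $|V_k|/\beta_k$ or $|\nabla V_k|/\beta_k$ fails to vanish along some $x_k\to x_0$, the pointwise estimates on $V_k$ obtained from \eqref{def-Vk} (the analogues of Lemmas~\ref{lem1}--\ref{nabla2}) force the quantity $L_k$ in \eqref{def-maxima} to diverge, and then Lemma~\ref{4.1} produces a spherical bubble at $x_0$. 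The point is that this argument detects only \emph{spherical} concentration; it is insensitive to whatever non-spherical mass may accumulate on $S_\Phi$, and that is why the $C^{1,\alpha}$ convergence survives there.

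A smaller remark on Step 2: the Pohozaev identity actually used in the paper is the one for the boundary integral equation \eqref{vk-54} (the three-dimensional analogue of Lemma~\ref{poho2}), not an identity for the bi-harmonic extension on half-balls. Working on the boundary keeps the computation parallel to Lemma~\ref{gap} and avoids the boundary terms on $\{t>0\}$ that a half-space Pohozaev identity would generate; your proposed extension-based identity is not obviously wrong, but you would need to supply it and check that the extra terms are controlled with only the one-sided bound \eqref{extra-assumption}.
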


 We shall  use   the following classification result from \cite{JMMX, Lin, Mar0}, see also \cite{HMM, Xu} and the references therein. 

\begin{thmx} \label{thmclas}
Let $u$ be a solution to \eqref{eq-R2n} with $m\geq 2$. Then either $u$ is  of the form \eqref{spherical} for some  $x_0\in\R^{m}$ and $\lambda>0$, or $u=v+p$ where $p$ is  an upper bounded non-constant polynomial of degree at most  $m-1$ and  $v(x)=O(\log (2+|x|))$. Moreover, if $u$ satisfies $\int_{B_R}|\D u|dx\leq CR^{m-2}$ for every $R\geq 1$ (equivalently, $p\equiv const$)  then $u$ is of the form \eqref{spherical}.
\end{thmx}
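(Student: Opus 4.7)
The plan is to follow the strategy used to prove Theorem \ref{thm2}, adapted to the non-local three-dimensional setting via the bi-harmonic extension. Two pieces must be established: (a) the \emph{a priori} uncountable blow-up set outside $S_\Phi$ reduces to a finite set $\{x^1,\dots,x^N\}\subset S_1\setminus S_\Phi$ with mass quantized by integer multiples of $\Lambda_1=4\pi^2$, and (b) the convergence $U_k/\beta_k\to\Phi$ is upgraded from $C^{2,\alpha}_{\loc}$ off $S_\Phi\cup S_1$ to $C^{1,\alpha}_{\loc}$ off only the finite set $\{x^1,\dots,x^N\}$.

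First I would set up the finiteness. By case $ii)$ of Theorem \ref{thmC}, $\Phi\leq 0$ with $\Phi(\cdot,0)<0$ on $\Omega\setminus S_\Phi$. On any compact $K\Subset(\Omega\setminus S_\Phi)\setminus S_1$ the convergence $u_k/\beta_k\to\Phi$ is uniform, so $u_k\leq \tfrac12\beta_k\Phi\to-\infty$ and $Q_ke^{3u_k}\to 0$ exponentially. Hence the only concentration of $Q_ke^{3u_k}$ in $\Omega\setminus S_\Phi$ sits at $S_1\setminus S_\Phi$, and the volume bound \eqref{cond-vol} combined with the definition of $S_1$ gives $\#(S_1\cap(\Omega\setminus S_\Phi))\leq 2C/\Lambda_1<\infty$. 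Label these points $\{x^1,\dots,x^N\}$.

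Next comes the bubble analysis at each $x^\ell$. Choose $\delta_0>0$ with $\overline{B_{\delta_0}(x^\ell)}\cap(S_\Phi\cup S_1)=\{x^\ell\}$ and pick $x_k\to x^\ell$ realising $M_k:=u_k(x_k)=\max_{B_{\delta_0}(x^\ell)}u_k\to\infty$; set $r_k=e^{-M_k}$ and rescale $V_k(X):=U_k(x_k+r_kX)-M_k$. Using the Poisson-type representation for $U_k$ on $\R^4_+$ (whose far-field integrability is exactly \eqref{extra-assumption}) together with the standard interior/boundary estimates for bi-harmonic extensions, a subsequence of $V_k$ converges in $C^2_{\loc}(\overline{\R^4_+})$ to some $V_\infty$ whose trace $v_\infty:=V_\infty|_{t=0}$ solves \eqref{eq-R2n} with $m=3$. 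Because $\Phi<0$ on $\partial B_{\delta_0}(x^\ell)\times\{0\}$, one has $u_k(x_k)-u_k\to+\infty$ on the sphere, which prevents the polynomial alternative in Theorem \ref{thmclas} and forces $v_\infty$ to be a standard bubble \eqref{spherical}, carrying mass $\Lambda_1=4\pi^2$. A bubble-tree iteration (transcribed from \cite{Mar2, Rob2} using the $C^1$ bound on $Q_k$ and a Pohozaev-type identity adapted to $(-\Delta)^{3/2}$) then extracts finitely many bubbles at $x^\ell$, controls the neck regions, and delivers \eqref{quant-24} with $N_\ell\in\mathbb{N}$.

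Finally, the upgrade to $C^{1,\alpha}$ across $S_\Phi$ follows the pattern of \eqref{conv-2}. In a neighborhood of any $y\in S_\Phi\setminus\{x^1,\dots,x^N\}$, $u_k\to-\infty$ uniformly, so $Q_ke^{3u_k}\to 0$ in $L^\infty_{\loc}$; combined with $\Delta^2(U_k/\beta_k)=0$ in $\R^4_+$ and $\partial_t(U_k/\beta_k)=0$ on $\R^3$, the boundary source in the extension problem tends to zero, and elliptic regularity for bi-harmonic functions upgrades the previously known weak convergence to $C^{1,\alpha}$ in a neighborhood of $y$. Hypothesis \eqref{extra-assumption} is crucial here to rule out spurious contributions from the non-local tail. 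I expect the main obstacle to be the multi-bubble extraction and the quantitative neck analysis in the non-local setting: Pohozaev identities and monotonicity formulas for $(-\Delta)^{3/2}$ are less transparent than their local counterparts, but the framework developed in \cite{DGHM} (and recorded as Theorem \ref{thmC}) provides exactly the tools needed.
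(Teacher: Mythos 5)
Your proposal does not address the statement at all. The statement you were asked to prove is Theorem \ref{thmclas}, a \emph{classification} result for entire solutions $u$ of $(-\De)^{m/2}u=(m-1)!\,e^{mu}$ on $\R^m$ with $\int_{\R^m}e^{mu}\,dx<\infty$: either $u$ is a spherical solution \eqref{spherical}, or $u=v+p$ with $p$ a non-constant upper-bounded polynomial of degree at most $m-1$ and $v(x)=O(\log(2+|x|))$; and the bound $\int_{B_R}|\De u|\,dx\leq CR^{m-2}$ forces the spherical alternative. This is a single-function, global, PDE classification theorem on $\R^m$ (cited from Lin, Martinazzi, Jin--Maalaoui--Martinazzi--Xiong, Xu), and in the paper it is invoked as a known background result; the paper does not prove it.

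What you have written instead is a blow-up/quantization argument (finiteness of the concentration set outside $S_\Phi$, rescaled bubbles, Pohozaev identity, neck analysis, $C^{1,\alpha}$ upgrade across $S_\Phi$) for a \emph{sequence} $(U_k)$ of bi-harmonic extensions on $\R^4_+$. That is a proof sketch of Theorem \ref{thm-nonlocal}, not of Theorem \ref{thmclas}. None of the objects you use --- $\beta_k$, $\Phi$, $S_\Phi$, $S_1$, $Q_k$, the extension $U_k$, the rescaling $V_k$ --- even appear in the statement of Theorem \ref{thmclas}. Worse, your argument explicitly \emph{invokes} Theorem \ref{thmclas} in the middle (``which prevents the polynomial alternative in Theorem \ref{thmclas} and forces $v_\infty$ to be a standard bubble''); if that theorem were the target, this would be circular.

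A proof of the actual statement would proceed along entirely different lines. One writes $u=v+p$ where
\begin{align*}
v(x):=\frac{1}{\gamma_{m/2}}\int_{\R^m}\log\!\left(\frac{|y|}{|x-y|}\right)(m-1)!\,e^{mu(y)}\,dy,
\end{align*}
so that $(-\De)^{m/2}v=(m-1)!\,e^{mu}$ and $(-\De)^{m/2}p=0$; one then proves $v(x)=O(\log(2+|x|))$ using the finiteness of $\int e^{mu}$, shows $p$ is a polynomial of degree $\leq m-1$ via Liouville-type arguments for poly-(or fractional-)harmonic functions bounded above, and establishes that if $p$ is constant then the asymptotics $u(x)=-\frac{\Lambda}{\gamma_{m/2}}\log|x|+O(1)$ with $\Lambda=\Lambda_1$ force $u$ to be the spherical bubble, e.g.\ via the moving-plane method or a Pohozaev identity \emph{for a single entire solution}. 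The growth condition $\int_{B_R}|\De u|\,dx\leq CR^{m-2}$ is then shown to be equivalent to $\deg p\leq 0$. You should retarget your effort at this classification theorem, or, if you intended Theorem \ref{thm-nonlocal}, say so explicitly --- but as a proof of Theorem \ref{thmclas}, the submitted argument has a fundamental mismatch of objects and a circular dependence.
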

 
\section{Proof of Theorems \ref{thm1}, \ref{thm3} and Corollary \ref{cor}}

We begin with the proof of   \eqref{conv-1}.  

\medskip 

\noindent {\bf\emph{Proof of  \eqref{conv-1}}}  The proof is similar to the one in  \cite{ARS}, see also \cite{Mar1}. The crucial  difference is that we need to show that the function $\vp$ is smooth in $\Omega$. For that purpose we decomposition  $u_k$ on the whole domain $\Omega$ instead of a small neighborhood of a point in $\Omega\setminus S_1$ as done in \cite{ARS, Mar1}.  More precisely, we write  $u_k=v_k+h_k$ where $\D^n h_k\equiv 0$ in $\Omega$ and  $v_k$ is given by   \begin{align}\label{def-vk}
v_k(x):=\frac{1}{\gamma_n}\int_{\Omega}\log\left(\frac{1}{|x-y|} \right)Q_k(y)e^{2nu_k(y)}dy. 
\end{align} 
Here the constant  $\gamma_n:=\frac{(2n-1)!}{2}|S^{2n}|$ satisfies $$(-\D)^n\log\frac{1}{|x|}=\gamma_n\delta_0\quad\text{in }\R^{2n}.$$

If necessary, restricting ourselves in a smaller domain $\tilde \Omega\Subset \Omega$, we can assume that $$Q_k\to Q_0\quad\text{ in }C^0(\bar \Omega). $$
It follows from \eqref{def-vk} and \eqref{eq-2} that $$\int_{\Omega}|v_k|dx\leq \frac{\|Q_k\|_{L^\infty(\Omega)}}{\gamma_n}\int_{\Omega}e^{2nu_k(y)}\int_{\Omega}|\log|x-y||dxdy\leq C,$$ and hence, again by \eqref{eq-2}, we have $$\int_{\Omega}h_k^+dx\leq \int_{\Omega}(u_k^++|v_k|)dx\leq \int_{\Omega}(e^{2nu_k}+|v_k|)dx\leq C.$$  
Assume that there exists $x_0\in\Omega$ and $R_0>0$ such that ($\beta_k\leq C$ corresponds to the case $i)$ of Theorem \ref{ARSM}) $$\beta_k:=\int_{B_{R_0}(x_0)}|h_k|dx\xrightarrow{k\to\infty}\infty,\quad B_{2R_0}(x_0)\subset\Omega.$$ Then the function $\vp_k:=\frac{h_k}{\beta_k}$ satisfies $$\D^n\vp_k=0\quad\text{in } \Omega,\quad \int_{B_{R_0}(x_0)}|\vp_k|dx=1,\quad \limsup_{k\to\infty}\int_{\Omega}\vp_k^+dx=0.$$  By Lemma \ref{polyharmonic},   up to a subsequence, we have for every $\ell\in\mathbb{N}$  
 \begin{align}\label{conv-vp}\frac{h_k}{\beta_k}=\vp_k\to\vp\quad\text{in }C^{\ell}_{loc}(\Omega).\end{align} It follows that  $$\D^n\vp=0\quad\text{in }\Omega,\quad \int_{B_{R_0}(x_0)}|\vp|dx=1,\quad \int_{\Omega}\vp^+dx=0,$$ and hence, $\vp\in \K(\Omega,\emptyset)$.  
 
We claim that   $v_k$ is bounded in $C^{2n-1,\alpha}_{loc}(\Omega\setminus S)$ where $S:=S_1\cup S_\vp$. In order to prove the claim first we fix a point $\xi\in \Omega\setminus S$. Then there exists $R>0$ such that $B_{2R}(\xi)\subset \Omega\setminus S$ and up to extracting a subsequence $$\limsup_{k\to\infty}\int_{B_{2R}(\xi)}Q_ke^{2nu_k}dx=:\Lambda_{R,x_0}<\frac{\Lambda_1}{2}=\gamma_n.$$ For $x\in B_R(\xi)$ we bound 
\begin{align}\label{vk-1}
|v_k(x)|\leq \frac{1}{\gamma_n}\int_{B_{2R}(\xi)}\left| \log|x-y|\right|Q_k(y)e^{2nu_k(y)}dy+C,  
\end{align}
where we have used \eqref{eq-2} and  $|\log |x-y||\leq C$ for $(x,y)\in B_R(\xi)\times \left( \Omega\cap B^c_{2R}(\xi)\right)$. Choosing $p>1$ such that $${p}\Lambda_{R,\xi}<{\gamma_n},$$ and together with Jensens inequality we obtain from \eqref{vk-1} that 
\begin{align*}
\int_{B_R(\xi)}e^{2np|v_k|(x)}dx\leq C\int_{B_R(\xi)}\int_{B_{2R}(\xi)}\frac{f_k(y)}{\|f_k\|} e^{q_k|\log x-y| |}dydx\leq C, 
\end{align*}
where  $$f_k:=Q_ke^{2nu_k},\quad \|f_k\|:=\|Q_ke^{2nu_k}\|_{L^1(B_{2R}(\xi))}, \quad\text{and } q_k:=\frac{2np}{\gamma_n}\|f_k\|,\quad  \limsup_{k\to\infty} q_k<2n.$$   This shows that  $$\int_{B_R(\xi)}e^{2npu_k(x)}dx\leq \int_{B_R(\xi)}e^{2npv_k(x)}dx\leq C,$$ as  $h_k<0$ on $B_R(\xi)$. By H\"older  inequality, from \eqref{vk-1} one gets $\|v_k\|_{L^\infty(B_\frac R2(\xi))}\leq C$, which implies that $e^{2nu_k}\leq C$ on $B_{\frac R2}(\xi)$. Hence, $v_k$ is bounded in $C^{2n-1,\alpha}(B_\frac R4(\xi))$, and our claim follows immediately by a covering argument.  

This finishes the proof of    \eqref{conv-1}. 
\hfill $\square$

\medskip

\subsection{Proof of \eqref{conv-2}} 
 \medskip 
 
Let us first introduce some notations. 
For a sequence of points $(x_k)$ in $ \Omega$ and a sequence of positive numbers $(\mu_k)$  with $B_{\mu_k}(x_k)\subset\Omega$ we let $s_k\in [0,\mu_k]$ and $\bar x_k\in \bar B_{s_k}(x_k)$ be such that (compare  \cite{ARS}) 
 \begin{align}\label{def-maxima}
(\mu_k-s_k)e^{u_k(\bar x_k)}=(\mu_k-s_k)\max_{ \bar B_{s_k}(x_k)}e^{u_k}  =\max_{r\in [0,\mu_k]}\left((\mu_k-r)\max_{ \bar B_{r}(x_k)}e^{u_k} \right)=:L_k.
\end{align}
Note that  $|x_k-\bar x_k|=s_k$,  $u_k\leq u_k(\bar x_k)$ in $\bar B_{s_k}(x_k)$, and    \begin{align}\label{uperest}  L_k\geq \max_{r\in [0,\frac{\mu_k}{2}]}\left((\mu_k-r)\max_{x\in \bar B_{r}(x_k)}e^{u_k(x)} \right)\geq \frac{\mu_k}{2} \max_{\bar B_\frac{\mu_k}{2}(x_k)}e^{u_k}.\end{align}
Setting   \begin{align}\label{def-rk}
r_k:=\frac{\mu_k-s_k}{L_k},
\end{align}
one gets  \begin{align}\label{uper}u_k(\bar x_k+r_kx)+\log r_k \leq 2\quad\text{for }|x|\leq \frac{L_k}{2}.\end{align}

The following three lemmas are crucial  in proving  \eqref{conv-2}. 

\begin{lem}\label{lem1}
 Let $\mu_k\to 0^+$ be such that $\log\mu_k=o(\beta_k)$. Assume that $$\lim_{k\to\infty}\frac{v_k(x_k)}{\beta_k}\neq 0,$$ for some $x_k\to x\in \Omega$.  Then $\lim_{k\to\infty} L_k=\infty$. 
\end{lem}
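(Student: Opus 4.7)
\medskip

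\noindent\textbf{Proof plan.} My plan is to argue by contradiction: suppose $L_k\not\to\infty$, and, passing to a subsequence, assume $L_k\le M$ for some constant $M$. The goal is to show that this forces $v_k(x_k)/\beta_k\to 0$, contradicting the hypothesis $\lim v_k(x_k)/\beta_k\ne 0$. The only input from $L_k\le M$ needed is the pointwise bound
$\max_{\bar B_{\mu_k/2}(x_k)}e^{u_k}\le 2L_k/\mu_k\le 2M/\mu_k$,
which follows directly from \eqref{uperest}.

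I would then split the logarithmic potential as $v_k(x_k)=A_k+B_k$, where $A_k$ is the integral in the definition \eqref{def-vk} of $v_k(x_k)$ restricted to $B_{\mu_k/2}(x_k)$ and $B_k$ is the corresponding integral over $\Omega\setminus B_{\mu_k/2}(x_k)$. For the far part $B_k$, any $y$ in the range of integration satisfies $|x_k-y|\ge\mu_k/2$, and since $\Omega$ is bounded $|\log|x_k-y||\le C+|\log\mu_k|$; combining with the uniform $L^\infty$-bound on $Q_k$ and the volume bound \eqref{eq-2}, I obtain $|B_k|\le C(1+|\log\mu_k|)$. For the near part $A_k$, the pointwise bound above gives $e^{2nu_k(y)}\le(2M/\mu_k)^{2n}$ on the ball of integration, while $\log(1/|x_k-y|)\ge 0$ there for $k$ large, so $A_k\ge 0$; a direct computation in polar coordinates yields $\int_{B_r(0)}\log(1/|z|)\,dz\le Cr^{2n}(1+|\log r|)$ for small $r>0$, and applied with $r=\mu_k/2$ this gives $0\le A_k\le CM^{2n}(1+|\log\mu_k|)$.

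Summing the two estimates, $|v_k(x_k)|\le C(1+|\log\mu_k|)$, and dividing by $\beta_k$ and using $\log\mu_k=o(\beta_k)$ yields $v_k(x_k)/\beta_k\to 0$, the desired contradiction. The only substantive step is the near-part estimate: everything reduces to translating the $L_k$-bound into pointwise control of $e^{u_k}$ on the shrinking ball $B_{\mu_k/2}(x_k)$, and then exploiting the local integrability of the logarithmic kernel. I do not foresee any essential obstacle beyond bookkeeping with the exponents in the polar-coordinate integral.
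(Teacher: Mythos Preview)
Your proposal is correct and follows essentially the same approach as the paper: split the logarithmic potential into near and far contributions at scale $\mu_k/2$, control the far part via the volume bound \eqref{eq-2} and the estimate $|\log|x_k-y||\le C(1+|\log\mu_k|)$, and control the near part via the pointwise bound \eqref{uperest} together with $\int_{B_r}|\log|z||\,dz\le Cr^{2n}|\log r|$. The only cosmetic difference is that the paper keeps $L_k$ explicit throughout and obtains $|v_k(x_k)|\le C|\log\mu_k|(1+L_k^{2n})$ directly, concluding $L_k\to\infty$ from this inequality, whereas you phrase the same computation as a contradiction argument after assuming $L_k\le M$ along a subsequence.
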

\begin{proof}
From \eqref{def-vk} one obtains 
\begin{align}\label{log}
|v_k(x_k)|&\leq C \left(\int_{\Omega\cap B^c_{\frac{\mu_k}{2}}(x_k)}+\int_{\Omega\cap B_{\frac{\mu_k}{2}}(x_k)}\right)|\log|x_k-y||e^{2nu_k(y)}dy\notag\\
&\leq C|\log \mu_k| \int_{\Omega}e^{2nu_k(y)}dy +C\left(\sup_{B_\frac{\mu_k}{2}(x_k)}e^{2nu_k}\right)\int_{ B_{\frac{\mu_k}{2}}(x_k)}|\log|x_k-y||dy\notag\\
&\leq C |\log \mu_k| \int_{\Omega}e^{2nu_k(y)}dy +C\left(\sup_{B_\frac{\mu_k}{2}(x_k)}e^{2nu_k}\right) \mu_k^{2n}|\log \mu_k|\\
&\leq C|\log\mu_k|+C\left(\frac{L_k}{\mu_k}\right)^{2n}\mu_k^{2n}|\log \mu_k|\notag\\
&\leq C|\log\mu_k|(1+L_k^{2n}),\notag
\end{align}
where  the second  last inequality follows from \eqref{eq-2} and \eqref{uperest}. Dividing the above inequality by $\beta_k$ one gets $$0\not\leftarrow \frac{v_k(x_k)}{\beta_k}=o(1)(1+L_k^{2n})\quad\Longrightarrow\quad L_k\to\infty.$$
\end{proof}

\begin{lem}\label{nabla}
 Let $\mu_k\to 0^+$ be such that $\mu_k^{-1}=o(\beta_k)$. Assume that $$\lim_{k\to\infty}\frac{|\nabla v_k(x_k)|}{\beta_k}\neq 0,$$ for some $x_k\to x\in \Omega$. Then $\lim_{k\to\infty} L_k=\infty$. 
\end{lem}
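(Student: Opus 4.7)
The plan is to mimic the proof of Lemma \ref{lem1}, but differentiating the representation of $v_k$ once before splitting the integral. Differentiating \eqref{def-vk} under the integral sign gives
\[
\nabla v_k(x) = -\frac{1}{\gamma_n}\int_{\Omega}\frac{x-y}{|x-y|^2}\,Q_k(y)e^{2nu_k(y)}\,dy,
\]
so $|\nabla v_k(x_k)|\leq C\int_{\Omega}|x_k-y|^{-1}e^{2nu_k(y)}\,dy$, with $C$ depending on $\|Q_k\|_{L^\infty(\Omega)}$. The natural strategy is then to split this integral at radius $\mu_k/2$ around $x_k$ and estimate the two pieces separately.

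On $\Omega\setminus B_{\mu_k/2}(x_k)$ the kernel is pointwise bounded by $2/\mu_k$, so the volume bound \eqref{eq-2} immediately gives a contribution of size at most $C/\mu_k$. On the inner ball $B_{\mu_k/2}(x_k)$ I would combine the pointwise bound $e^{2nu_k}\leq (2L_k/\mu_k)^{2n}$, which is exactly \eqref{uperest}, with the elementary polar-coordinate identity $\int_{B_{\mu_k/2}(x_k)}|x_k-y|^{-1}\,dy\leq C\mu_k^{2n-1}$ in $\R^{2n}$. The inner contribution is therefore bounded by $CL_k^{2n}/\mu_k$, and putting the two pieces together yields
\[
|\nabla v_k(x_k)|\leq \frac{C}{\mu_k}\bigl(1+L_k^{2n}\bigr).
\]

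Dividing through by $\beta_k$ and invoking the hypothesis $\mu_k^{-1}=o(\beta_k)$ turns the right-hand side into $o(1)(1+L_k^{2n})$. Since the left-hand side does not go to zero by assumption, I conclude exactly as in the final step of Lemma \ref{lem1} that $L_k\to\infty$. I do not foresee any substantive obstacle: the argument is a direct adaptation of Lemma \ref{lem1}, the only point to note being that the correct loss of weight for the gradient kernel is $\mu_k^{-1}$ rather than $|\log\mu_k|$, which is precisely why the scaling hypothesis is strengthened from $\log\mu_k=o(\beta_k)$ in the previous lemma to $\mu_k^{-1}=o(\beta_k)$ here.
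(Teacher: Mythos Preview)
Your proof is correct and follows essentially the same approach as the paper: differentiate the integral representation, split at radius $\mu_k/2$, bound the outer piece by $C/\mu_k$ via \eqref{eq-2} and the inner piece by $CL_k^{2n}/\mu_k$ via \eqref{uperest} together with $\int_{B_{\mu_k/2}(x_k)}|x_k-y|^{-1}\,dy\le C\mu_k^{2n-1}$, then divide by $\beta_k$ and use $\mu_k^{-1}=o(\beta_k)$. Your closing remark explaining why the hypothesis is strengthened from $\log\mu_k=o(\beta_k)$ to $\mu_k^{-1}=o(\beta_k)$ is a nice addition not explicitly stated in the paper.
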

\begin{proof} 
Differentiating under the integral sign, from \eqref{def-vk}, and together with \eqref{eq-2} and \eqref{uperest} we bound 
\begin{align*}
|\nabla v_k(x_k)| 
&\leq  C\left(\int_{\Omega\cap B^c_{\frac{\mu_k}{2}}(x_k)}+\int_{\Omega\cap B_{\frac{\mu_k}{2}}(x_k)}\right)\frac{e^{2nu_k(y)}}{|x_k-y|}dy\\
&\leq \frac{C}{\mu_k}\int_{\Omega}e^{2nu_k(y)}dy+C\left( \sup_{ B_{\frac{\mu_k}{2}}(x_k)}e^{2nu_k}\right)\int_{B_{\frac{\mu_k}{2}}(x_k)}\frac{dy}{|x_k-y|}\\
&\leq \frac{C}{\mu_k}+C\left(\frac{L_k}{\mu_k}\right)^{2n}\mu_k^{2n-1}\\
&\leq  \frac{C}{\mu_k}(1+L_k^{2n}). 
\end{align*}
The lemma follows immediately. 
\end{proof}

\begin{lem}\label{nabla2}
Let $0\leq\alpha<1$ be fixed. For $i=1,2$ let $(x_{i,k})$ be  two sequences of points on $\Omega$ such that $x_{i,k}\to x_0\in\Omega$.  Assume that $x_{1,k}\neq x_{2,k}$ for every $k$ and   
$$\frac{1}{\beta_k}\frac{|\nabla v_k(x_{1,k})-\nabla v_k(x_{2,k})|}{|x_{1,k}-x_{2,k}|^\alpha}\not\to0.$$ If $\mu_{i,k}\to0^+$ satisfies $$\mu_{i,k}^2\beta_k\geq |x_{1,k}-x_{2,k}|^\frac{1-\alpha}2,$$ then $\max\{L_{1,k},L_{2,k}\}\to\infty$ ($L_{i,k}$ is defined  by taking  $x_k=x_{i,k}$ and  $\mu_k=\mu_{i,k}$ in \eqref{def-maxima}). 
\end{lem}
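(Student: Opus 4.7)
The proof should mirror the contradiction scheme used in Lemmas \ref{lem1} and \ref{nabla}. Assume, for contradiction, that $L_k := \max\{L_{1,k}, L_{2,k}\}$ stays bounded along a subsequence. Setting $\mu_k := \min\{\mu_{1,k}, \mu_{2,k}\}$ and applying \eqref{uperest} at both centres gives the pointwise control $e^{2nu_k} \leq C L_k^{2n}/\mu_k^{2n}$ on $B_{\mu_k/2}(x_{i,k})$ for $i=1,2$. Differentiating \eqref{def-vk} under the integral sign,
\begin{equation*}
\nabla v_k(x_{1,k}) - \nabla v_k(x_{2,k}) = -\frac{1}{\gamma_n}\int_{\Omega} g_k(y)\, Q_k(y)\, e^{2nu_k(y)}\, dy,
\end{equation*}
where $g_k(y) := \frac{x_{1,k}-y}{|x_{1,k}-y|^{2}} - \frac{x_{2,k}-y}{|x_{2,k}-y|^{2}}$ satisfies $|g_k(y)| \leq |y-x_{1,k}|^{-1} + |y-x_{2,k}|^{-1}$ always, and $|g_k(y)| \leq C d_k |y-x_{1,k}|^{-2}$ whenever $|y-x_{1,k}| \geq 2 d_k$, by the mean value theorem, where $d_k := |x_{1,k}-x_{2,k}|$. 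The plan is to show $\beta_k^{-1} d_k^{-\alpha} |\nabla v_k(x_{1,k}) - \nabla v_k(x_{2,k})| \to 0$, contradicting the hypothesis.

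\textbf{Two regimes.} I would split by comparing $d_k$ and $\mu_k$. In the small-separation regime $d_k \leq \mu_k / 8$, decompose $\Omega$ into a near ball $B_{2 d_k}(x_{1,k})$, a middle annulus $B_{\mu_k/2}(x_{1,k}) \setminus B_{2 d_k}(x_{1,k})$, and a far region. On the near ball, combining the singular bound on $|g_k|$ with the $L^\infty$ control on $e^{2nu_k}$ yields a contribution of order $L_k^{2n} d_k^{2n-1}/\mu_k^{2n}$; on the middle annulus, the MVT bound together with the same $L^\infty$ control produces $d_k L_k^{2n}/\mu_k^{2}$ (using $n>1$ to integrate $r^{2n-3}$); on the far region, the MVT bound together with \eqref{eq-2} gives $d_k/\mu_k^{2}$. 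Dividing through by $d_k^{\alpha} \beta_k$ and using the hypothesis $\mu_k^{2} \beta_k \geq d_k^{(1-\alpha)/2}$ together with $\mu_k \geq 8 d_k$ bounds every term by a multiple of $L_k^{2n} d_k^{(1-\alpha)/2}$, which tends to $0$. In the large-separation regime $d_k > \mu_k / 8$, I would invoke the triangle inequality together with the estimate $|\nabla v_k(x_{i,k})| \leq C(1 + L_{i,k}^{2n})/\mu_{i,k}$ (precisely the calculation inside the proof of Lemma \ref{nabla}) to obtain
\begin{equation*}
\frac{|\nabla v_k(x_{1,k}) - \nabla v_k(x_{2,k})|}{d_k^{\alpha} \beta_k} \leq \frac{C(1 + L_k^{2n})}{\mu_k^{1+\alpha} \beta_k} = \frac{C(1+L_k^{2n})\, \mu_k^{\alpha - 1}}{\mu_k^{2}\beta_k},
\end{equation*}
after which the hypothesis $\mu_k^{2}\beta_k \geq d_k^{(1-\alpha)/2} \geq C \mu_k^{(1-\alpha)/2}$ yields a bound by $C(1 + L_k^{2n})\, \mu_k^{(\alpha-1)/2} \to 0$, since $\alpha < 1$ and $\mu_k \to 0$.

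\textbf{Main obstacle.} The hard step will be the near integral in the small-separation regime: the naive estimate $L_k^{2n} d_k^{2n-1}/\mu_k^{2n}$ has unfavourable powers for $n \geq 2$, and the algebra only closes after simultaneously using the hypothesis $\mu_k^{2}\beta_k \geq d_k^{(1-\alpha)/2}$ and the case condition $\mu_k \geq 8 d_k$; concretely, the identity $\frac{4n-3-\alpha}{2} - (2n-2) = \frac{1-\alpha}{2}$ is what makes every power of $d_k$ and $\mu_k$ balance out. The minor subtlety is choosing a clean case split at $d_k \sim \mu_k$ so that, once $d_k$ is comparable to $\mu_k$, the triangle inequality combined with Lemma \ref{nabla}'s estimate suffices and one avoids any delicate cancellation in $g_k$.
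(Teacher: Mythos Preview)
Your two-regime argument is sound in principle, but there is a sign slip in the large-separation case: the rewriting $\tfrac{1}{\mu_k^{1+\alpha}\beta_k}=\tfrac{\mu_k^{\alpha-1}}{\mu_k^{2}\beta_k}$ is wrong (the numerator should carry $\mu_k^{1-\alpha}$), and your final bound should read $C(1+L_k^{2n})\,\mu_k^{(1-\alpha)/2}$, whose exponent is \emph{positive} and does tend to $0$. As written, $\mu_k^{(\alpha-1)/2}\to\infty$. With that correction your scheme closes.

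That said, the paper's route is much shorter and bypasses both the case split and what you call the ``main obstacle.'' The point you are missing is that the bound
\[
\left|\frac{x_{1,k}-y}{|x_{1,k}-y|^{2}}-\frac{x_{2,k}-y}{|x_{2,k}-y|^{2}}\right|
\;\le\;|x_{1,k}-x_{2,k}|\left(\frac{1}{|x_{1,k}-y|^{2}}+\frac{1}{|x_{2,k}-y|^{2}}\right)
\]
holds for \emph{every} $y\in\Omega$, not only for $|y-x_{1,k}|\ge 2d_k$ (in fact one has the exact identity $\bigl|\tfrac{a}{|a|^2}-\tfrac{b}{|b|^2}\bigr|=\tfrac{|a-b|}{|a|\,|b|}$, and then AM--GM). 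Using this globally gives
\[
|\nabla v_k(x_{1,k})-\nabla v_k(x_{2,k})|\le C\,d_k\,(I_1+I_2),\qquad I_i:=\int_\Omega\frac{e^{2nu_k(y)}}{|x_{i,k}-y|^{2}}\,dy,
\]
and the same splitting as inside Lemma~\ref{nabla} (with $|\cdot|^{-2}$ in place of $|\cdot|^{-1}$) yields $I_i\le C(1+L_{i,k}^{2n})/\mu_{i,k}^{2}$. Dividing by $\beta_k d_k^{\alpha}$ and invoking the hypothesis $\mu_{i,k}^{2}\beta_k\ge d_k^{(1-\alpha)/2}$ once produces the bound $C\,d_k^{(1-\alpha)/2}(1+L_k^{2n})$ in a single line. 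The near-ball contribution you isolate never needs separate treatment: the full factor $d_k$ survives the pointwise inequality even through the singularities, which is exactly what your mean-value estimate gives up.
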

\begin{proof}
Differentiating under the integral sign, from \eqref{def-vk}, and using that  $$\left| \frac{x_{1,k}-y}{|x_{1,k}-y|^2}- \frac{x_{2,k}-y}{|x_{2,k}-y|^2}  \right|  
\leq |x_{1,k}-x_{2,k}|\left(\frac{1}{|x_{1,k}-y|^2}+\frac{1}{|x_{2,k}-y|^2} \right),$$  we obtain 
\begin{align*}
|\nabla v_k(x_{1,k})-\nabla v_k(x_{2,k})|&\leq C|x_{1,k}-x_{2,k}|\int_{\Omega}\left(\frac{1}{|x_{1,k}-y|^2}+\frac{1}{|x_{2,k}-y|^2} \right)e^{2nu_k(y)}dy\\
&=C|x_{1,k}-x_{2,k}|(I_1+I_2), 
\end{align*}
where $$I_i:=\int_{\Omega} \frac{1}{|x_{i,k}-y|^2} e^{2nu_k(y)}dy,\quad i=1,2.$$
As in Lemma \ref{nabla} one can show that $$I_i\leq  \frac{C}{\mu^2_{i,k}}(1+L_{i,k}^{2n}),\quad i=1,2.$$ Thus $$|\nabla v_k(x_{1,k})-\nabla v_k(x_{2,k})|\leq C|x_{1,k}-x_{2,k}|\left(\frac{1+L_{1,k}^{2n}}{\mu^2_{1,k}}+\frac{1+L_{2,k}^{2n}}{\mu^2_{2,k}}\right).$$
The lemma follows  as $\alpha<1$.
\end{proof}

\medskip 

\noindent\emph{\textbf{Proof of \eqref{conv-2}}}
Since $\frac{h_k}{\beta_k}\to\vp$ in $C^\ell_{loc}(\Omega)$ for every $\ell\in\mathbb{N}$, \eqref{conv-2} is equivalent to   \begin{align}\label{conv-vk}\frac{v_k}{\beta_k}\to0\quad\text{in }C^{1,\alpha}_{loc}(\Omega\setminus S_{sph}),\quad 0\leq\alpha<1.\end{align}
We claim that for every compact set $K\Subset \Omega\setminus S_{sph}$ we have $$\frac{1}{\beta_k}\left(\|v_k\|_{C^1(K)}  +[\nabla v_k]_{C^{0,\alpha}(K)}\right)\xrightarrow{k\to\infty}0.$$ 
We prove the claim in two steps. 

\medskip 

\noindent\textbf{Step 1} $\frac{\|v_k\|_{C^1(K)}}{\beta_k}\to 0$. 

We assume by contradiction that $\frac{\|v_k\|_{C^1(K)}}{\beta_k}\not\to 0$. Then there exists $x_k\to x_0\in \Omega\setminus S_{sph}$ such that $$\frac{|v_k(x_k)|+|\nabla v_k(x_k)|}{\beta_k}\not\to 0.$$ We set $\mu_k:=\frac{1}{\sqrt{\beta_k}}$, and let $\bar x_k$, $L_k$ and  $r_k$   be as in \eqref{def-maxima}, \eqref{def-rk}. By  Lemmas \ref{lem1}  and \ref{nabla} we have  $L_k\to\infty$,     $r_k\to 0$, and \begin{align}\label{rkbetak}r_k^2\beta_k\leq \frac{\mu_k^2}{L_k^2}\beta_k=\frac{1}{L_k^2}\to0.\end{align}
Setting  $$\eta_k(x):=u_k(\bar x_k+r_kx)+\log r_k+\frac{1}{2n}\log \frac{Q_0(x_0)}{(2n-1)!} \quad\text {for }\bar x_k+r_kx\in\Omega,$$ and by \eqref{uper} we obtain 
$$(-\D)^n\eta_k(x)=(2n-1)!\frac{\bar Q_k(x)}{Q_0(x_0)}e^{2n\eta_k(x)},\quad \eta_k\leq C\quad\text{in }B_\frac{L_k}{2}, $$ where $\bar Q_k(x):=Q_k(\bar x_k+r_kx)$. Moreover,  by \eqref{eq-2}, \eqref{def-vk}, \eqref{conv-vp} and \eqref{rkbetak}, we have for every $R>0$ 
\begin{align}\label{est-laplacian}
\int_{B_R}|\D\eta_k(x) |dx & \leq r_k^2\int_{B_R}|\D v_k(\bar x_k+r_kx)|dx +r_k^2\int_{B_R}| \D h_k(\bar x_k+r_kx) |dx \notag \\
&\leq C\int_{\Omega}Q_k(y)e^{2nu_k(y)}\int_{B_R}\frac{r_k^2dx}{|\bar x_k+r_kx-y|^2}dy+ r_k^2\beta_k O(R^{2n})\notag \\
&\leq CR^{2n-2}+o(R^{2n}).
\end{align}
 Therefore, by elliptic estimates, up to a subsequence,  $\eta_k\to\eta$ in $C^{2n-1}_{loc}(\R^{2n})$ where $\eta$ satisfies $$(-\D)^n\eta=(2n-1)!e^{2n\eta}\quad \text{in }\R^n,\quad \int_{\R^{2n}}e^{2n\eta}dx<\infty,\quad \int_{B_R}|\D\eta|dx=O(R^{2n-2}),$$ thanks to \eqref{est-laplacian}.  It follows from Theorem \ref{thmclas}  that $\eta$ is of the form \eqref{spherical}. Thus, $x_k\to x_0\in S_{sph}$, a contradiction.

\medskip 

\noindent\textbf{Step 2} $\frac{[\nabla v_k]_{C^{0,\alpha}(K)}}{\beta_k}\to 0$. 

Since  $v_k\in C^2(\Omega)$,  there exists $x_{1,k}, x_{2,k}\in K$ with $x_{1,k}\neq x_{2,k}$ such that  $$[\nabla v_k]_{C^{0,\alpha}(K)}=\sup_{x,y\in K}\frac{|\nabla v_k(x)-\nabla v_k(y)|}{|x-y|^\alpha}=\frac{|\nabla v_k(x_{1,k})-\nabla v_k(x_{2,k})|}{|x_{1,k}-x_{2,k}|^\alpha}.$$    If $|x_{1,k}-x_{2,k}|\not\to 0$ then Step 2 follows from Step 1.   
Thus, we only need to consider the case $|x_{1,k}-x_{2,k}|\to 0$. 

We assume by contradiction that   $\frac{[\nabla v_k]_{C^{0,\alpha}(K)}}{\beta_k}\not\to 0$. We set $$\mu_k=\mu_{1,k}=\mu_{2,k}:=\beta_k^{-\frac12}|x_{1,k}-x_{2,k}|^\frac{1-\alpha}{4}.$$ We let $\bar x_{i,k}$ and $L_{i,k}$ be as in    \eqref{def-maxima} with $x_k=x_{i,k}$, $i=1,2$. Then by Lemma \ref{nabla2} we get $L_k:=\max \{L_{1,k},L_{2,k}\}\to\infty$. By relabelling,  we may assume that   $L_k=L_{1,k}\to\infty$. Letting $r_k:=r_{1,k}$ (as defined in \eqref{def-rk}) we see that \eqref{rkbetak} holds. 
Now one can proceed as in Step 1 to get a contradiction.  

We conclude the proof of  \eqref{conv-2}
\hfill$\square$

\medskip 

\noindent\emph{\textbf {Proof of Corollary \ref{cor}}} It follows immediately from \eqref{conv-2} that $$S_\infty\setminus S_\vp\subseteq S_{sph}\quad\text{and }S_\infty\setminus S_{sph}\subseteq S_\vp,$$ which is the first part of the corollary. 

  We claim that $\vp\equiv const <0$ whenever 
 the scalar curvature  $R_{g_{u_k}}$  is uniformly bounded from below, that is, \begin{align}\label{scalar}R_{g_{u_k}}=-2(2n-1)e^{-2u_k}\left( \D u_k+(n-1)|\nabla u_k|^2\right)\geq -C\quad\text{in }\Omega.  \end{align}  

In order to prove the claim we fix  a ball  $B_\ve(x_0)\Subset \Omega\setminus (S_1\cup S_\vp)$. Then by \eqref{conv-1} we get $$ \D u_k+(n-1)|\nabla u_k|^2=(1+o(1))\beta_k\left (|\D \vp|+\beta_k(n-1+o(1))|\nabla \vp|^2\right)\quad\text{in }B_\ve(x_0).$$ This and \eqref{scalar} implies that $\nabla \vp\equiv0$ in $B_\ve(x_0)$, and hence $\vp\equiv const$ in $B_\ve(x_0)$. By unique continuation theorem we conclude that $\vp\equiv const$ on $\Omega$.   
In particular,  $S_\vp=\emptyset$, and from the first part of the corollary, we deduce $S_\infty=S_{sph}$. 
 \hfill $\square$
 
 \medskip

\subsection{Proof of \eqref{critical} and Theorem \ref{thm3}} 
For a given point $x_0\in\Omega$ and a constant  $\delta>0$ with  $B_{2\delta}(x_0)\subset \Omega$,  we fix a smooth  cut-off function   $\psi\in C^\infty_c(B_{2\delta}(x_0))$ such that $\psi\equiv 1$ in $B_\delta(x_0)$.  We split the function $v_k$ into $v_k=\bar v_{k}+\tilde v_{k}$ where 
 \begin{align}\label{barv}
\bar v_{k}:=\frac{1}{\gamma_n}\int_{\Omega}\log\left(\frac{1}{|x-y|}\right){\bar Q_k}(y)e^{2n\bar v_k(y)}dy,\quad {\bar Q_k }:=\psi Q_ke^{2n h_k}e^{2n\tilde v_k}.
\end{align}

\medskip

\noindent\emph{\textbf{Proof of \eqref{critical}}} Since $S_\infty\cap S_\vp\subseteq S_\vp\subseteq \{\nabla \vp=0\}$, we only need to show that $S_\infty\setminus S_\vp\subseteq \{\nabla \vp=0\}$. 

Let $x_0\in S_\infty\setminus S_\vp$.  
Then necessarily $x_0\in S_{sph}$, thanks to \eqref{conv-2}. We  choose  $\delta>0$ such that  
\begin{align}\label{19}u_k \to-\infty \quad\text{locally uniformly in } \bar B_{2\delta}(x_0)\setminus\{x_0\},\quad B_{2\delta}(x_0)\subset \Omega.\end{align} For  this choice of $\delta$ and $\psi$, we have  ${\bar Q_k}\in C_c^1(\Omega)$. Therefore,  by \eqref{poho} 
\begin{align}
0&=\int_{\Omega}\nabla {\bar Q_k}(x)e^{2n\bar v_k(x)}dx\notag\\
&=2n\int_{\Omega}\left(\nabla h_k(x) \right)\psi Q_ke^{2nu_k(x)}dx+\int_{\Omega}\left(\nabla (\psi Q_k e^{2n\tilde v_k})(x)\right) e^{2n(h_k+\bar v_k)(x)}dx\notag\\
&=:2n I_1+I_2.\label{I1+I2}
\end{align}
Since $x_0\in S_{sph}$ we have $$\lim_{\ve\to 0^+}\lim_{k\to\infty}\int_{B_\ve(x_0)}Q_k(x)e^{2nu_k(x)}dx\geq \Lambda_1,$$  which leads to      
$$\lim_{k\to\infty}\frac{|I_1|}{\beta_k}=\lim_{\ve\to 0^+}\lim_{k\to\infty} (|\nabla \vp(x_0)|+o_{\ve,k}(1)) \int_{B_\ve(x_0)}Q_k(x)e^{2nu_k(x)}\geq  \frac12\Lambda_1 |\nabla \vp(x_0)|,$$ thanks to \eqref{conv-vp} and \eqref{19}. Recalling that  $(Q_k)$ is bounded in $C^1_{loc}(\Omega)$, $\psi\equiv 1$ on $B_\delta(x_0)$, and  from  \eqref{def-vk}, \eqref{barv} and \eqref{19}, we infer \begin{align}|\nabla (\psi Q_ke^{2n\tilde v_k})|\leq Ce^{2n\tilde v_k},\label{est-31}\end{align} which gives   $|I_2|\leq C$. Plugin these estimates in \eqref{I1+I2} we obtain  $$\frac12\Lambda_1 |\nabla \vp(x_0)|\leq \lim_{k\to\infty}\frac{|I_1|}{\beta_k}=\frac{1}{2n}\lim_{k\to\infty}\frac{|I_2|}{\beta_k}=0.$$ We conclude \eqref{critical}.
\hfill $\square$

\medskip 

\noindent\emph{\textbf{Proof of Theorem \ref{thm3}}} Let $x_0\in S_\infty$ be such that  $\nabla^2\vp(x_0)$ is strictly negative definite. Then $\vp(x)<\vp(x_0)\leq 0$ for $x_0\neq x\in \bar B_{2\delta}(x_0)\subset \Omega$ for some $\delta>0$. We  can also  assume  that $\nabla^2\vp<0$ on $\bar B_{2\delta}(x_0)$ and  \eqref{19} holds. We let $x_k\in B_{2\delta}(x_0)$ be such that  $h_k(x_k):=\max _{\bar B_{2\delta }(x_0)}h_k$. As $\vp<0$ on $\bar B_{2\delta}(x_0)\setminus\{x_0\}$ we have $x_k\to x_0$ and $\nabla h_k(x_k)=0$. Therefore, for every   $ x\in B_{2\delta}(x_0)$  one has \begin{align}\label{nablahk}\nabla h_k(x)&=\int_0^1\frac{d}{dt}\nabla h_k(tx+(1-t)x_k)dt=\int_{0}^1\nabla^2h_k(tx+(1-t)x_k)[x-x_k]dt. 
\end{align}
Using that $\nabla^2\vp<0$ on $\bar B_{2\delta}(x_0)$, and by \eqref{conv-vp}, we have for $x\in  B_{2\delta}(x_0)$ \begin{align}(x-x_k)\cdot\nabla h_k(x)&=\beta_k\int_{0}^1\nabla^2\vp(tx+(1-t)x_k)[x-x_k,x-x_k]dt+o(\beta_k)|x-x_k|^2 \notag\\ 
&\leq -c_1\beta_k|x-x_k|^2, \label{est-33} \end{align} for some $c_1>0$. Now we apply Lemma \ref{poho2}  to the  integral equation \eqref{barv} with $\tilde \Omega=\Omega$, $\xi=x_k$, where $\psi \in C_c^\infty(B_{2\delta}(x_0))$ is such that $\psi=1$ on $B_\delta(x_0)$ and $\psi\geq 0$. Indeed, setting $$\lambda_k=\bar\lambda_k:=\int_{\Omega}{\bar Q_k}e^{2n\bar v_k}dx=\int_\Omega Q_k\psi e^{2nu_k}dx,$$ and by \eqref{eq-2}, \eqref{19}, \eqref{est-31} and \eqref{est-33} one obtains
\begin{align}\label{rem-39}& \frac{\lambda_k}{\Lambda_1}(\lambda_k-\Lambda_1)\notag\\ &=\frac{1}{2n}\int_\Omega (x-x_k)\cdot\nabla {\bar Q_k }(x)e^{2n\bar v_k(x)}\notag\\ &=\int_{\Omega}(x-x_k)\cdot \nabla h_k(x) \psi Q_k e^{2n u_k}dx+\frac{1}{2n}\int_{\Omega}(x-x_k)\cdot \nabla (\psi Q_k e^{2n\tilde v_k})e^{2nh_k}e^{2n\bar v_k}dx\notag \\ &\leq -c_1\beta_k\int_\Omega |x-x_k|^2\psi Q_k e^{2nu_k}dx +C\int_\Omega |x-x_k|\psi e^{2nu_k}dx\\ &\leq o(1)\notag.\end{align} Thus, $$\limsup_{k\to\infty}\int_{B_\delta(x_0)}Q_ke^{2nu_k}dx\leq \limsup_{k\to\infty}\lambda_k\leq \Lambda_1.$$   

The second part of the theorem follows from the first part and \eqref{NLambda}.
\hfill $\square$

\medskip 

\begin{rem} If $x_0\in S_\infty\cap S_{\vp}$ with $\nabla^2\vp(x_0)$ strictly negative definite,  then there exists $\delta>0$  and $\xi_k\to x_0$ such that $u_k\leq u_k(\xi_k)$ on $B_\delta(x_0)$ and \eqref{19} holds. Then setting $$\eta_k(x)=u_k(\xi_k+r_kx)-u_k(\xi_k),\quad r_k:=e^{-u_k(\xi_k)},$$ one can show that, up to a subsequence, $\eta_k\to \eta$ in $C^{2n-1}_{loc}(\R^n)$, provided $r_k^2\beta_k\to c_0\in[0,\infty)$. The limit function $\eta$ is a solution to \begin{align}\label{rem-eq}(-\D)^n\eta=Q(x_0)e^{2n\eta}\quad\text{in }\R^{2n},\quad \Lambda:=Q(x_0)\int_{\R^{2n}}e^{2n\eta}dx<\infty.\end{align} Then $\eta $ is a spherical solution  if and only if $c_0=0$, and in this case,  we have a quantization of energy around $x_0$. However, if $c_0\neq0$ then $\eta$  is a non-spherical solution, and necessarily $$\lim_{k\to\infty}\int_{B_\delta(x_0)}Q_ke^{2nu_k}dx<\Lambda_1,$$   which  follows from \eqref{rem-39}. 

It is worth pointing out that   non-spherical solutions to \eqref{rem-eq} with $\Lambda\geq\Lambda_1$ (they do exist in dimension $6$ and higher, see \cite{HD, H-volume, HM-gluing, LM-vol}) can not appear as a blow-up limit   if $\nabla\vp(x_0)$ is strictly negative definite.   \end{rem}

\section{Proof of  Theorem \ref{thm2}}

We recall that by  Corollary \ref{cor} we have $S_\infty\cap \{\vp<0\}\subset S_{sph}$.  Therefore, for  every $x_0\in S_\infty$ with  $\vp(x_0)<0$  there exists   $\delta>0$  such that   $B_{2\delta}(x_0)\subset\Omega$,  $B_{2\delta}(x_0)\cap S_\infty=\{x_0\}$ and \eqref{19} holds. 

The following lemma has been proven in \cite{Mar2, Rob2}  (see also \cite{DR}) under the assumption \eqref{cond-uk}. However, here we prove it only  assuming that the blow-up points are not in the zero set of $\vp$. More precisely the following:

\begin{lem}\label{points} Let $x_0\in S_\infty\setminus S_\vp$. Let $\delta>0$ be such that $\bar B_{2\delta}\subset\Omega$ and \eqref{19} holds. Then  there exists an integer $N\geq 1$ and $N$ sequences of points $(x_{i,k})$  with $1\leq i\leq N$ such that, up to a subsequence, the following holds:  
\begin{itemize}
\item[i)] $\lim_{k\to\infty}x_{i,k}= x_0$ for $i=1,\dots,N$. 
\item[ii)] For $1\leq i,j\leq N$ with $i\neq j$ $$\lim_{k\to\infty}\frac{|x_{i,k}-x_{j,k}|}{r_{i,k}}=\infty,\quad r_{i,k}:=e^{-u_k(x_{i,k})}\to0,\quad \frac{u_k(x_{i,k})}{\beta_k}\not\to0.$$
\item[iii)] We have $$\eta_{i,k}(x):=u_k(x_{i,k}+r_{i,k}x)+\log r_{i,k}+\frac{1}{2n}\log\frac{Q_0(x_0)}{(2n-1)!}\to\eta (x)\quad\text{ in }C^{2n-1}_{loc}(\R^{2n})$$ where $\eta$ is a spherical solution to \eqref{eq-R2n}. In particular, $$\lim_{R\to\infty}\lim_{k\to\infty}\int_{B_{Rr_{i,k}}(x_{i,k})}Q_k(x)e^{2nu_k(x)}dx=\Lambda_1.$$
\item[iv)] There exists $C>0$ such that   \begin{align}\inf_{1\leq i\leq N}|x-x_{i,k}|e^{u_k(x)}\leq C\quad\text{for every }x\in B_\delta(x_0).\end{align}
\end{itemize}
\end{lem}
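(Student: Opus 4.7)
The plan is to iteratively select bubble centers via a maximum procedure (in the style of \cite{DR, Mar2, Rob2}), rescale by $r_{i,k}=e^{-u_k(x_{i,k})}$, and apply Theorem \ref{thmclas} to identify each limit as a spherical solution. The process will terminate when iv) holds, forced by the volume bound \eqref{eq-2} since each bubble carries curvature $\Lambda_1$ in the limit.

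For initialization I would shrink $\delta$ so that $\bar B_{2\delta}(x_0)\cap S_\infty=\{x_0\}$ and $\vp<0$ on $\bar B_{2\delta}(x_0)$ (possible since $\vp\in\K(\Omega,\emptyset)$ by Theorem \ref{thm1} and $\vp(x_0)<0$ because $x_0\notin S_\vp$), and take $x_{1,k}\in\bar B_\delta(x_0)$ to be a maximum of $u_k$. Since $x_0\in S_\infty$ and $u_k\to-\infty$ on $\bar B_{2\delta}(x_0)\setminus\{x_0\}$ by \eqref{19}, we get $x_{1,k}\to x_0$, $u_k(x_{1,k})\to\infty$, and $r_{1,k}:=e^{-u_k(x_{1,k})}\to 0$. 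Setting $\eta_{1,k}(x):=u_k(x_{1,k}+r_{1,k}x)+\log r_{1,k}+\frac{1}{2n}\log\frac{Q_0(x_0)}{(2n-1)!}$, we obtain a sequence bounded above on $B_{\delta/r_{1,k}}$ with $\eta_{1,k}(0)$ constant, solving $(-\D)^n\eta_{1,k}=(2n-1)!(1+o(1))e^{2n\eta_{1,k}}$. The Laplacian estimate \eqref{est-laplacian}, applied with $\bar x_k=x_{1,k}$, $r_k=r_{1,k}$, yields $\int_{B_R}|\D\eta_{1,k}|\,dx\le CR^{2n-2}+O(r_{1,k}^2\beta_k)R^{2n}$; once $r_{1,k}^2\beta_k\to 0$ is shown, Theorem \ref{thmclas} forces $\eta_{1,k}\to\eta$ in $C^{2n-1}_{\loc}(\R^{2n})$ with $\eta$ spherical, yielding iii) and the local quantization $\int_{B_{Rr_{1,k}}(x_{1,k})}Q_ke^{2nu_k}\to\Lambda_1$ by change of variables.

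The crucial non-routine step is proving the bound $u_k(x_{1,k})/\beta_k\not\to 0$ (which in particular delivers $r_{1,k}^2\beta_k\to 0$). I would obtain this from the decomposition $u_k=v_k+h_k$ introduced in the proof of \eqref{conv-1}. A careful analysis of the Newton potential \eqref{def-vk} at the peak, splitting into the bubble region $B_{Rr_{1,k}}(x_{1,k})$ (where the change of variables $z=(y-x_{1,k})/r_{1,k}$ applies and the leading log contribution is $u_k(x_{1,k})$), the intermediate annular region inside $B_\delta(x_0)$ (where the mass is $o(1)$ once all local mass is captured), and the far region $\Omega\setminus B_\delta(x_0)$ (where $|\log|x_{1,k}-y||=O(1)$), gives $v_k(x_{1,k})=2\lambda_k u_k(x_{1,k})+o(u_k(x_{1,k}))+O(1)$ where $\lambda_k:=\Lambda_1^{-1}\int_{B_\delta(x_0)}Q_ke^{2nu_k}$; by Corollary \ref{cor} we have $x_0\in S_{sph}$, so $\liminf_k\lambda_k\ge 1$. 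Combined with $h_k(x_{1,k})=\beta_k(\vp(x_0)+o(1))$, $\vp(x_0)<0$, and $u_k=v_k+h_k$, this yields $u_k(x_{1,k})=\beta_k|\vp(x_0)|(2\lambda_k-1)^{-1}(1+o(1))$, which is positive and comparable to $\beta_k$.

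Finally, the iteration: if iv) fails for $\{x_{1,k},\dots,x_{i,k}\}$, there exist $y_k\in\bar B_\delta(x_0)$ with $\min_{j\le i}|y_k-x_{j,k}|e^{u_k(y_k)}\to\infty$; choosing $x_{i+1,k}$ as an appropriately weighted maximum (as in \cite{DR, Mar2, Rob2}) then forces the separation $|x_{i+1,k}-x_{j,k}|/r_{i+1,k}\to\infty$ for $j\le i$, so the previous bubbles are pushed to infinity in the rescaling around $x_{i+1,k}$ and the same analysis applies to establish iii) for the new index. Finiteness $N<\infty$ follows because the separated bubbles carry disjoint masses $\Lambda_1$ within the bounded total mass given by \eqref{eq-2}. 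The main obstacle is precisely this selection/separation step: verifying that the weighted-maximum procedure yields a new bubble whose rescaled profile is a genuine spherical solution uncontaminated by the earlier bubbles requires the delicate scale bookkeeping of \cite{DR, Mar2, Rob2}, here adapted to the situation $\vp\not\equiv 0$ near the blow-up point.
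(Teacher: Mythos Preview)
Your overall scheme—iterated extraction of bubble centers via (weighted) maxima, rescaling, application of Theorem \ref{thmclas}, and termination by the volume bound—coincides with the paper's. The gap lies precisely where you flag the ``crucial non-routine step'': the asymptotic $v_k(x_{1,k})=2\lambda_k u_k(x_{1,k})+o(u_k(x_{1,k}))+O(1)$ is circular and in fact false in general. To assert that the intermediate annulus carries mass $o(1)$ you are assuming that all of the curvature in $B_\delta(x_0)$ concentrates at scale $r_{1,k}$ around $x_{1,k}$, which is exactly what the lemma is set up to \emph{investigate}; when $N\ge 2$ a second bubble sits in that annulus and contributes roughly $2\log\frac{1}{|x_{1,k}-x_{2,k}|}$ to $v_k(x_{1,k})$, not $2u_k(x_{1,k})$. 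Your resulting formula $u_k(x_{1,k})\sim\beta_k|\vp(x_0)|/(2\lambda_k-1)$ would then give $u_k(x_{1,k})\sim\beta_k|\vp(x_0)|/(2N-1)$, contradicting \eqref{uk=betak}, which yields $u_k(x_{1,k})\sim\beta_k|\vp(x_0)|$ regardless of $N$.

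The paper sidesteps this with a one-line crude bound that needs \emph{no} information on the mass distribution: split $\Omega=B_{r_k}(x_k)\cup(\Omega\setminus B_{r_k}(x_k))$; on the inner ball use $e^{2nu_k}\le e^{2nu_k(x_k)}=r_k^{-2n}$ (from $x_k$ being the maximum) and on the outer region use $|\log|x_k-y||\le |\log r_k|+C$ together with \eqref{eq-2}. This yields $v_k(x_k)\le Cu_k(x_k)$, and then
\[
C\,\frac{u_k(x_k)}{\beta_k}\;\ge\;\frac{v_k(x_k)}{\beta_k}\;=\;\frac{u_k(x_k)}{\beta_k}-\frac{h_k(x_k)}{\beta_k}\;\ge\;-\tfrac12\vp(x_0)\;>\;0,
\]
forcing $u_k(x_k)/\beta_k\not\to 0$ and hence $r_k^2\beta_k\to 0$. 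The same crude bound works at each inductive step, since the weighted maximum still delivers $e^{u_k}\le(1+o(1))e^{u_k(x_{m+1,k})}$ on $B_{r_{m+1,k}}(x_{m+1,k})$. With this fix in place of your asymptotic, the rest of your outline matches the paper's proof.
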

\begin{proof}
We prove the lemma by induction. For $m\geq 1$ we say that $\mathcal{H}_m$ holds if there exists $m$ sequences of points  $(x_{i,k})$ converging to $x_0$ such that, up to a subsequence, the following holds: 
\begin{itemize}
\item[$(\mathcal{H}^1_m)$] for $1\leq i,j\leq m$ with $i\neq j$ $$\lim_{k\to\infty}\frac{|x_{i,k}-x_{j,k}|}{r_{i,k}}=\infty,\quad r_{i,k}:=e^{-u_k(x_{i,k})}\to0,\quad \frac{u_k(x_{i,k})}{\beta_k}\not\to0.$$
\item[$(\mathcal{H}^2_m)$]  for $1\leq i\leq m$ we have  $$\eta_{i,k}(x):=u_k(x_{i,k}+r_{i,k}x)+\log r_{i,k}+\frac{1}{2n}\log\frac{Q_0(x_0)}{(2n-1)!}\to\eta (x)\quad\text{in }C^{2n-1}_{loc}(\R^{2n}),$$ where $\eta$ is a spherical solution to \eqref{eq-R2n}. 
\end{itemize}

Let us first show that $\mathcal{H}_1$ holds. 
Let $x_k=x_{1,k}$ be such that $u_k(x_k)=\max_{ B_{\delta}(x_0)}u_k$. As $u_k\to -\infty$ locally uniformly in $\bar B_\delta(x_0)\setminus\{x_0\}$, we have   $x_k\to x_0$. 
  Splitting the domain  $\Omega=\Omega_1\cup \Omega_2$ where $\Omega_1:=B_{r_k}(x_k)$ and  $\Omega_2:=\Omega\setminus\Omega_1$  with  $r_k:=e^{-u_k(x_k)}$,  from  \eqref{def-vk} we obtain 
\begin{align*}
v_k(x_k)&\leq Ce^{2nu_k(x_k)}\int_{\Omega_1} |\log|x_k-y||dy+C\int_{\Omega_2} |\log|x_k-y||e^{2nu_k(y)}dy \\
&\leq C|\log r_k|\\ &=Cu_k(x_k). 
\end{align*}
This implies that for $k$ large $$C\frac{u_k(x_k)}{\beta_k}\geq \frac{v_k(x_k)}{\beta_k}=\frac{u_k(x_k)}{\beta_k}-\frac{h_k(x_k)}{\beta_k}\geq -\frac12\vp(x_0)>0.$$ Hence,    $\mathcal{H}_1$ follows,  thanks to Lemma \ref{2.6}.

Now we assume that $\mathcal{H}_m$ holds for some $m\geq 1$. We also assume that 
\begin{align}\label{Hm}
\sup_{x\in B_\delta(x_0)}d_{m,k}(x)e^{u_k(x)}\to\infty,\quad d_{m,k}(x):=\inf_{1\leq i\leq m}|x-x_{i,k}| \quad\text{for }x\in B_\delta(x_0).
\end{align}
We claim that $\mathcal{H}_{m+1}$ holds. To prove the claim we  let $x_{m+1,k}$  be given by  $$ d_{m,k}(x_{m+1,k})e^{u_k(x_{m+1,k})}=\sup_{x\in B_\delta(x_0)}d_{m,k}(x)e^{u_k(x)}.$$ Setting  $r_{m+1,k}:=e^{-u_k(x_{m+1,k})}$, by \eqref{Hm}, one has $$\frac{|x_{i,k}-x_{m+1,k}|}{r_{m+1,k}}\xrightarrow{k\to\infty}\infty\quad\text{for }1\leq i\leq m.$$ Moreover, by $(\mathcal{H}^2_m)$ $$\frac{|x_{i,k}-x_{m+1,k}|}{r_{i,k}}\xrightarrow{k\to\infty}\infty\quad\text{for }1\leq i\leq m.$$ Since $$\lim_{k\to\infty}\frac{r_{m+1,k}}{d_{m,k}(x_{m+1,k})}=0,$$  from the definition of $d_{m,k}$ and $r_{m+1,k}$, one gets  for every $R>0$ $$e^{u_k(x)-u_k(x_{m+1,k})}\leq \frac{d_{m,k}(x_{m+1,k})}{d_{m,k}(x)}=1+o(1),\quad\text{for }|x-x_{m+1,k}|\leq R r_{m+1,k},$$ where  $o(1)\to0$ uniformly as $k\to\infty$.  
Using this, as before,  one would  get $\frac{u_k(x_{m+1,k})}{\beta_k}\not\to0$. Thus,  $(\mathcal{H}^1_{m+1})$ holds, and by Lemma \ref{2.6},  $(\mathcal{H}^2_{m+1})$ holds.  This proves our claim.

Since  $(\mathcal{H}^1_m)$ and $(\mathcal{H}^2_m)$ imply that $$\int_{B_\delta(x_0)}Q_ke^{2nu_k}dx\geq m\Lambda_1+o(1),$$ there exists a maximal $m$ such that $\mathcal{H}_m$ holds. Arriving at this maximal $m$, we get that \eqref{Hm} can not hold, and conclude the lemma with $N=m$. 
\end{proof}

A consequence of $iv)$ of Lemma \ref{points} is the following: 
\begin{lem}\label{outside} Let $N$, $\delta$ and $(x_{i,k})$ be as in Lemma \ref{points}. Then there exists $C>0$ such that $$v_k(x)\leq C|\log d_{N,k}(x)|=C\max_{1\leq i\leq N}|\log|x-x_{i,k}||,\quad x\in B_\delta(x_0).$$ In particular, for every  $\rho_k>0$ with $\log \rho_k=o(\beta_k)$, we have $$u_k\to-\infty\quad\text{uniformly in }B_\delta(x_0)\setminus \cup_{i=1}^N B_{\rho_k}(x_{i,k}).$$
\end{lem}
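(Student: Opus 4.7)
The plan is to prove the first (logarithmic) bound on $v_k$ by direct estimation of the Riesz-type representation \eqref{def-vk}, using the pointwise decay $e^{u_k(y)}\leq C/d_{N,k}(y)$ furnished by part iv) of Lemma \ref{points}; the ``in particular'' statement will then be a soft consequence of this bound together with the fact that $\vp(x_0)<0$, which forces $h_k$ to be very negative throughout $B_{2\delta}(x_0)$.

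For the upper bound on $v_k$, I would first shrink $\delta$ so that $|x-x_{i,k}|<1/e$ for all $x\in B_\delta(x_0)$, all $i$, and all large $k$; this already yields the equality $|\log d_{N,k}(x)|=\max_{1\leq i\leq N}|\log|x-x_{i,k}||$ claimed in the statement, and ensures that $|\log d_{N,k}(x)|$ is bounded below by a positive constant. In the integral defining $v_k(x)$, the region $\{|x-y|\geq 1\}$ contributes non-positively and can be dropped. On $\Omega\cap B_1(x)$ I split at radius $r:=\tfrac12 d_{N,k}(x)$: for $y\in B_1(x)\setminus B_r(x)$ the kernel $\log(1/|x-y|)$ is bounded by $|\log r|$, so together with the volume bound \eqref{eq-2} this piece contributes at most $C|\log d_{N,k}(x)|$; for $y\in B_r(x)$, the triangle inequality gives $d_{N,k}(y)\geq d_{N,k}(x)/2$, so by Lemma \ref{points} iv) we have $e^{2nu_k(y)}\leq C\,d_{N,k}(x)^{-2n}$, and substituting this pointwise bound, while noting $\int_{B_r(x)}|\log|x-y||\,dy=O(r^{2n}|\log r|)$, yields a contribution of the same order. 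Absorbing additive constants into $C|\log d_{N,k}(x)|$ gives the claimed estimate.

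For the second assertion, since $x_0\in S_\infty\setminus S_\vp$ we have $\vp(x_0)<0$, and by smoothness of $\vp$ (which, by Theorem \ref{thm1}, lies in $\K(\Omega,\emptyset)$) we may further shrink $\delta$ so that $\vp\leq -c_0$ on $\bar B_{2\delta}(x_0)$ for some $c_0>0$. The convergence \eqref{conv-vp} then gives $h_k\leq -c_0\beta_k/2$ on $\bar B_{2\delta}(x_0)$ for $k$ large. For every $x\in B_\delta(x_0)\setminus\bigcup_{i=1}^N B_{\rho_k}(x_{i,k})$ we have $d_{N,k}(x)\geq \rho_k$, so the first part yields $v_k(x)\leq C|\log \rho_k|=o(\beta_k)$; consequently $u_k(x)=v_k(x)+h_k(x)\leq o(\beta_k)-c_0\beta_k/2\to -\infty$ uniformly on that complement.

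The only delicate point is the near-diagonal estimate on $B_r(x)$: the pointwise control on $e^{u_k}$ from Lemma \ref{points} iv) degenerates precisely like $d_{N,k}^{-2n}$, which is exactly cancelled by the factor $r^{2n}$ coming from the volume of $B_r(x)$, leaving only a logarithmic residue. Everything else is standard bookkeeping once that cancellation is secured.
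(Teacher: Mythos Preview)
Your proof is correct and takes essentially the same route as the paper's: split the integral for $v_k(x)$ at radius $\tfrac12 d_{N,k}(x)$, bound the outer shell via the volume bound \eqref{eq-2} and the inner ball via the pointwise control $e^{u_k}\le C/d_{N,k}$ from Lemma~\ref{points}~iv) (with the triangle inequality $d_{N,k}(y)\ge \tfrac12 d_{N,k}(x)$), then combine with $h_k/\beta_k\to\vp$ and $\vp(x_0)<0$ for the second assertion. The paper's write-up is more compact only because it invokes the already-established chain \eqref{log} with $x_k=x$, $\mu_k=d_{N,k}(x)$ rather than reproducing the computation, and it covers the annular region (where your shrinking of $\delta$ is not strictly licensed for the \emph{fixed} $\delta$ of the statement) directly by citing \eqref{19}.
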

\begin{proof}
 Taking $x_k=x$ and  $\mu_k=d_{N,k}(x)$  in \eqref{log}, and using that $d_{N,k}(x)\leq 2 d_{N,k}(\tilde x)$ for $|x-\tilde x|<\frac12d_{N,k}(x)$, one would get   
 the first part of the lemma, thanks to $iv)$ of Lemma \ref{points}. 
The second part follows immediately from \eqref{19} and  $\vp(x_0)<0$.  
\end{proof}

\medskip   

\noindent\emph{\textbf{Proof of  \eqref{NLambda}}}  From Corollary \ref{cor} we have $ \dot{S}_{sph}:= S_\infty\setminus S_\vp\subset S_{sph}$, and hence, $\dot S_{sph}$ is   either  empty or finite. If the set $\dot S_{sph}$ is empty then \eqref{NLambda} follows trivially as $$u_k\to-\infty\quad\text{locally uniformly in }\Omega\setminus S_\vp,$$ thanks to \eqref{conv-2}. In the later case we denote the set $\dot S_{sph}$ by $\{x^{(1)},\dots, x^{(L)}\}$. We observe that \eqref{NLambda} is equivalent to  \begin{align}\label{xell} \int_{B_{\delta_\ell}(x^{(\ell)})}Q_ke^{2nu_k}dx\to N_\ell\Lambda_1\quad \text{for some }\delta_\ell>0,\,N_\ell\in\mathbb{N},\end{align}   for every $ \ell\in\{1,\dots, L\}.$

For $x_0=x^{(\ell)}\in\dot S_{sph}$ we fix $\delta>0$ such that \eqref{19} holds.  Let $(x_{i,k})$ and $N$ be as in Lemma \ref{points}. For $i\in I:=\{1,\dots,N\}$ we set $$J_i:=\{j\in I :\sup_{k\geq1}\beta_k|x_{i,k}-x_{j,k}|<\infty\}.$$ Note that  $J_{i_1}\cap J_{i_2}=\emptyset$ if and only if $i_2\not\in J_{i_1}$, and $J_{i_1}= J_{i_2}$ if and only if $i_2\in J_{i_1}$. Thus, $I$ can be written as a disjoint union of $J_{i}$'s.  

  We fix $R_i>0$ such that $$\beta_k|x_{i,k}-x_{j,k}|\leq\frac{R_i}{8}\quad\text{for every }j\in J_i.$$
We set $$w_{i,k}(x):=u_k(x_{i,k}+\beta_k^{-1}x)-\log \beta_k\quad \text{for } |x|\leq R_i,\quad i=1,\dots, N.$$ Then, setting $ Q_{i,k}(x):=Q_k(x_{i,k}+\beta_k^{-1}x)$ we have 
\begin{align*}
\left\{ \begin{array}{ll}
(-\D)^n w_{i,k}=Q_{i,k}e^{2n w_{i,k}}\quad\text{in }B_{R_i},\\ \rule{0cm}{.6cm}
\int_{B_{R_i}}e^{2n w_{i,k}}dx\leq C,\\ \rule{0cm}{.6cm}
Q_{i,k}\to Q_0(x_0)>0\quad\text{in }C^0(B_{R_i}).\\
\end{array}
\right.
\end{align*}
In fact, by  $ii)$ of Lemma \ref{points} $$\max_{x\in B_{R_i}}\min_{j\in J_i}|x-\bar x_{i,j,k}|e^{w_{i,k}(x)}\leq C,\quad \bar x_{i,j,k}:=\beta_k(x_{j,k}-x_{i,k}),\quad j\in J_i,$$  
$$w_{i,k}(\bar x_{i,j,k})\to\infty\quad\text{for every }j\in J_i,$$ and for every $j_1\neq j_2$ with $j_1,j_2\in J_i$ $$\frac{|\bar x_{i,j_1,k}-\bar x_{i,j_2,k}|}{\bar r_{i,j_1,k}}\to\infty,\quad \bar r_{i,j_1,k}:=e^{-w_{i,k}(\bar x_{i,j_1,k})}.$$
Moreover, differentiating under the integral sign, from \eqref{def-vk} \begin{align*}\int_{B_{R_i}}|\D w_{i,k}|dx&\leq \beta_k^{-2}\int_{B_{R_i}}\left(|\D v_k(x_{i,k}+\beta_k^{-1}x)| +\beta_k|\D \vp(x_0)|+o(\beta_k)\right)dx\\&\leq o(1)+C\int_{\Omega}e^{2nu_k(y)}\int_{B_{R_i}}\frac{\beta_k^{-2}}{|x_{i,k}-\beta_k^{-1}x-y|^2}dxdy\\&\leq C.\end{align*}
Thefore, by Theorem \ref{RM},  there exists a positive integer $N_i$ (from the proof of Theorem \ref{RM} one would have $N_i=|J_i|$) such that $$\int_{B_\frac{R_i\beta_k}{2}(x_{i,k})}Q_ke^{2nu_k}dx=\int_{B_\frac{R_i}2}Q_{i,k}e^{2nw_{i,k}}dx\xrightarrow{k\to\infty}N_i\Lambda_1.$$  Together with Lemma \ref{outside}  
$$\int_{B_\delta(x_0)}Q_ke^{2nu_k}dx\to N\Lambda_1,\quad N:=\sum_{J_i\text{ disjoint}}N_i.$$ This proves \eqref{xell}. 
\hfill $\square$ 

\medskip  
\medskip 

Under a slightly stronger assumption on $Q_k$, namely $\|Q_k\|_{C^1}\leq C$,   one can have a simpler proof of \eqref{xell} (without using Theorem \ref{RM}). The main idea is to  use a Pohozaev type identity around each peak $x_{i,k}$,   compare  \cite{DR}.  

\begin{lem}\label{gap} Let $x_0=x^{(\ell)}\in\dot S_{sph}$. Let  $\delta>0$ be such that \eqref{19} holds. Let $(x_{i,k})$ and $N$ be as in Lemma \ref{points}. If $(Q_k)$ is bounded in $C^1_{loc}(\Omega)$ then $$|x_{i,k}-x_{j,k}|\beta_k^2\to\infty\quad  \text{for every } i,j\in I:=\{1,\dots,N\} \text{ with } i\neq j,$$ and $$\int_{B_{\beta_k^{-2}}(x_i,k)}Q_ke^{2nu_k}dx\to\Lambda_1.$$ In particular, \eqref{xell} holds with $\delta_\ell=\delta$ and $N_\ell=N$. 
\end{lem}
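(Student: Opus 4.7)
The plan is to apply the Pohozaev-type identity (Lemma \ref{poho2}) around each peak $x_{i,k}$ at a small scale $\tau_k = O(\beta_k^{-2})$ and argue by contradiction. Fix $i\in I$ and suppose, along a subsequence, $\delta_k := \min_{j\neq i}|x_{j,k}-x_{i,k}|\le C_0\beta_k^{-2}$. Since $|I|=N$ is finite, a pigeonhole in the peak distances from $x_{i,k}$ lets me choose $\tau_k = O(\beta_k^{-2})$ such that the minimising peak $x_{j_0,k}$ lies in $B_{\tau_k/2}(x_{i,k})$, the annulus $A_k := B_{\tau_k}(x_{i,k})\setminus B_{\tau_k/2}(x_{i,k})$ is peak-free, and any peak outside $B_{\tau_k}(x_{i,k})$ sits at distance $\ge K_k\tau_k$ with $K_k\to\infty$ (or else all cluster peaks already lie inside $B_{\tau_k/2}$). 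I pick $\psi_k\in C_c^\infty(B_{\tau_k}(x_{i,k}))$ with $\psi_k\equiv 1$ on $B_{\tau_k/2}(x_{i,k})$ and $|\nabla\psi_k|\le C/\tau_k$, decompose $v_k = \bar v_k + \tilde v_k$ via \eqref{barv}, and apply Lemma \ref{poho2} with $\xi = x_{i,k}$ to obtain
\[
\frac{\lambda_k}{\Lambda_1}(\lambda_k - \Lambda_1) = \frac{1}{2n}\int (x - x_{i,k})\cdot\nabla\bar Q_k\, e^{2n\bar v_k}\,dx,\qquad \lambda_k := \int\psi_k Q_k e^{2nu_k}\,dx.
\]

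Next, expanding $\nabla\bar Q_k\, e^{2n\bar v_k}$ (using $e^{2n(h_k+\bar v_k+\tilde v_k)} = e^{2nu_k}$) into the four contributions from $\nabla\psi_k$, $\nabla Q_k$, $\nabla h_k$ and $\nabla\tilde v_k$, I would bound each by an $o(1)$ term. The $\nabla\psi_k$ term is $O\bigl(\int_{A_k} e^{2nu_k}\bigr) = o(1)$ by Lemma \ref{outside} applied with $\rho_k = \tau_k/4$, valid because $\log\tau_k = o(\beta_k)$ and $A_k$ is peak-free; the $\nabla Q_k$ term is $O(\tau_k\lambda_k) = o(1)$ using $\|Q_k\|_{C^1}\le C$; the main $\nabla h_k$ term is bounded by $\|\nabla h_k\|_\infty\tau_k\lambda_k = O(\beta_k\tau_k) = O(\beta_k^{-1})$ thanks to $\nabla h_k/\beta_k \to \nabla\vp$ in $C^0_{\loc}$; and the $\nabla\tilde v_k$ term is $O(1/K_k) + O(\tau_k) = o(1)$, obtained by splitting the Riesz integral for $\tilde v_k$ into the contribution from peaks outside $B_{\tau_k}(x_{i,k})$ (each at distance $\ge K_k\tau_k$, so $|\nabla\tilde v_k|\lesssim N/(K_k\tau_k)$) and the bounded contribution from other clusters and the uniform background. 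The identity thus reduces to $\lambda_k(\lambda_k - \Lambda_1) = o(1)$.

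Any subsequential limit of $\lambda_k$ therefore lies in $\{0,\Lambda_1\}$. Since $r_{i,k}\ll\tau_k$, part $iii)$ of Lemma \ref{points} gives $\lambda_k \ge \Lambda_1 - o(1)$, ruling out $0$, so $\lambda_k\to\Lambda_1$. But $x_{j_0,k}\in B_{\tau_k/2}(x_{i,k})$ by construction, and its own bubble contributes a further $\Lambda_1$, forcing $\lambda_k \ge 2\Lambda_1 - o(1)$ — a contradiction. Hence $\beta_k^2|x_{j,k}-x_{i,k}|\to\infty$ for every $j\neq i$. Rerunning the Pohozaev computation with $\tau_k = \beta_k^{-2}$ (now trivially containing only $x_{i,k}$) yields $\int_{B_{\beta_k^{-2}}(x_{i,k})} Q_k e^{2nu_k}\,dx\to\Lambda_1$; summing over the $N$ peaks and applying Lemma \ref{outside} with $\rho_k = \beta_k^{-2}$ to the complement in $B_\delta(x_0)$ produces $\int_{B_\delta(x_0)}Q_k e^{2nu_k}\,dx\to N\Lambda_1$, proving \eqref{xell} with $\delta_\ell=\delta$ and $N_\ell=N$.

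The main technical obstacle is the combined pigeonhole-plus-$\nabla\tilde v_k$ estimate: one has to simultaneously keep $\tau_k = O(\beta_k^{-2})$ (so the main $\nabla h_k$ term vanishes), make $A_k$ peak-free (so Lemma \ref{outside} applies there), and leave a wide peak-free zone just outside $B_{\tau_k}$ (so the non-local gradient $\nabla\tilde v_k$ does not overwhelm the gain from $|x-x_{i,k}|\le\tau_k$). These requirements are mutually compatible because $I$ is finite with all peaks converging to $x_0$, so any ill-distributed configuration of peak distances forces a geometric gap $K_k\to\infty$ among them.
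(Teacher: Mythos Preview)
Your argument is essentially correct, but it proceeds along a different route from the paper's. The paper keeps the cut-off $\psi$ \emph{fixed} at scale $\delta$ (so that $\tilde v_k$ is uniformly $C^1$ near $x_0$, by \eqref{est-31}) and instead applies Lemma~\ref{poho2} with a \emph{shrinking subdomain} $\tilde\Omega=B_{\rho_{i,k}}(x_{i,k})$ at the intermediate scale $\rho_{i,k}:=\tfrac12\min\{\min_{j\notin J_i}|x_{i,k}-x_{j,k}|,\,\beta_k^{-3/2}\}$, where $J_i:=\{j:\sup_k\beta_k^2|x_{i,k}-x_{j,k}|<\infty\}$. The interaction with the mass outside $\tilde\Omega$ is then absorbed by the cross term in \eqref{poho-eq}: because $\beta_k^{-2}=o(\rho_{i,k})$, one has $\frac{(x+y-2x_{i,k})\cdot(x-y)}{|x-y|^2}=-1+o(1)$ for $x$ in the cluster and $y\notin\tilde\Omega$, so the cross term equals $\frac{(\lambda_k-\tilde\lambda_k)\tilde\lambda_k}{\Lambda_1}+o(1)$, and the identity collapses directly to $\tilde\lambda_k\to\Lambda_1$, forcing $J_i=\{i\}$ without any contradiction step or pigeonhole.

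Your choice to shrink the cut-off itself to scale $\tau_k=O(\beta_k^{-2})$ and apply Pohozaev on the full $\Omega$ trades that cross term for an explicit bound on $(x-x_{i,k})\cdot\nabla\tilde v_k$; the pigeonhole you describe is exactly the device that manufactures the separation $K_k\to\infty$ the paper gets for free from the definition of $J_i$ and $\rho_{i,k}$. Both proofs hinge on the same two scale constraints (radius times $\beta_k$ going to zero to kill the $\nabla h_k$ term; a gap between the cluster and the remaining peaks to kill the nonlocal interaction), but the paper's version avoids having to control the Riesz integral for $\nabla\tilde v_k$ across the annulus $A_k$ where $\psi_k$ transitions --- a point your sketch leaves somewhat implicit and that requires Lemma~\ref{outside} together with $|A_k|\sim\tau_k^{2n}$ to close. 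In short: your proof works, but the paper's subdomain trick is the cleaner packaging of the same idea.
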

\begin{proof}
In order to prove the lemma we fix $i\in I$ and consider the set of indices $$J_i:=\{j\in I:\sup_{k\geq 1}\beta_k^2|x_{i,k}-x_{j,k}|<\infty\}.$$ Setting \begin{align*}\rho_{i,k}:=\frac12\min\left\{ \min\{|x_{i,k}-x_{j,k}|:j\in I\setminus J_i \}, \, \beta_k^{-\frac{3}{2}}\right\} 
\end{align*}  we see that $\beta_k^{-2}=o(\rho_{i,k})$.   This would imply 
\begin{align}\frac{(x+y-2x_{i,k})\cdot(x-y)}{|x-y|^2}=-1+o(1)\quad\text{for }(x,y)\in B_{R_i\beta_k^{-2}}(x_{i,k})\times   B^c_{\rho_{i,k}}(x_{i,k}),\label{xy-37}\end{align} where  $R_i>0$ is such that $$|x_{i,k}-x_{j,k}|\leq \frac {R_i}{2}\beta_k^{-2}\quad\text{ for every }j\in J_i.$$ This, and  from the definition of $\rho_{i,k}$     we obtain   \begin{align}u_k\to -\infty \quad \text{uniformly in }\bar B_{\rho_{i,k}}(x_{i,k})\setminus B_{R_i\beta_k^{-2}}(x_{i,k}),\label{xy-38}\end{align} thanks to Lemma \ref{outside}. Moreover,  from the definition of $\rho_{i,k}$ we see that $$\lim_{k\to\infty}\int_{B_{\rho_{i,k}}(x_{i,k})}Q_ke^{2nu_k}dx\geq \Lambda_1.$$ This is a consequence of    $ii)-iii)$ of Lemma \ref{points}.

Next, we  apply  Lemma \ref{poho2} to the integral equation \eqref{barv}.  Indeed, fixing $\psi\in C_c^\infty(B_{2\delta}(x_0))$ with $\psi\equiv1$ on $B_\delta(x_0)$,  from Lemma \ref{poho2} with  $v=\bar v_k$, $K=K_k=\psi Q_ke^{2nh_k}e^{2n\tilde v_k}$, $\xi=x_{i,k}$,   $\tilde\Omega=B_{\rho_{i,k}}(x_{i,k})$ and $$\tilde\lambda_k:=\int_{\tilde\Omega}K_ke^{2n\bar v_k}dx,\quad \lambda_k:=\int_{\Omega}K_ke^{2n\bar v_k}dx,$$ we get \begin{align}&\frac{\tilde \lambda_k}{\Lambda_1}(\lambda_k-\Lambda_1)\notag\\&=-\frac{1}{\Lambda_1} \int_{{\tilde\Omega}}\int_{\Omega\setminus\tilde\Omega}\frac{(x+y-2x_{i,k})\cdot(x-y)}{|x-y|^2}K_k(y)e^{2n\bar v_k(y)}K_k(x)e^{2n\bar v_k(x)}dydx \notag\\ &\quad+\frac{1}{2n}\int_{{\tilde\Omega}}(x-x_{i,k})\cdot \nabla K_k(x)e^{2n\bar v_k(x)}dx-\int_{\partial \tilde\Omega}K_ke^{2n\bar v_k}(x-x_{i,k}) \cdot \nu(x) d\sigma(x)\notag \\ &=:(I)+(II)+(III). \label{39double} \end{align} 
Using  \eqref{xy-37}-\eqref{xy-38} one has $$(I)=\frac{1}{\Lambda_1} (\lambda_k-\tilde\lambda_k)\tilde\lambda_k+o(1).$$ Since $\rho_{i,k}\beta_k\to0$, from \eqref{conv-vp}, \eqref{est-31} and \eqref{xy-38} we infer that $$(II)=o(1)\quad\text{and }(III)=o(1).$$ 
Plugin these estimates in \eqref{39double} we get that  $\tilde\lambda_k\to\Lambda_1$, that is, $$\int_{B_{\rho_{i,k}}(x_{i,k})}Q_ke^{2nu_k}dx\to\Lambda_1,$$ and hence  $J_i=\{i\}$, thanks to   Lemma \ref{points}. 

The lemma follows immediately from Lemma \ref{outside}. 
\end{proof}

\medskip 

Next we prove a stronger version \eqref{uk=betak}. More precisely, we show that if there are multiple spherical bubbles collapsing at a blow-up point $x_0\in\dot S_{sph}$, then the height of each peak has the same order $-\beta_k\vp(x_0)$. \\

\noindent\emph{\textbf{Proof of \eqref{uk=betak}} } Let $x_0=x^{(\ell)}\in \dot S_{sph}$. Let $(x_{i,k})$ be  as in Lemma \ref{points} for some $1\leq i\leq N$. We claim that  for every $1\leq i\leq N$ \begin{align} \label{33} u_k(x_{i,k})=-\beta_k\vp(x_0)(1+o(1)) .\end{align}    

It follows that    $$u_k(x_{k})=u_k(x_{k}+Rr_{k}\sigma)+O(\log R),\quad r_k:=e^{-u_k(x_k)},\quad \text{for }R\geq2,\,\sigma\in S^{2n-1},$$ where we ignored the index $i$ and simply write  $x_k$ for  $x_{i,k}$. For $\ve>0$ we fix $R=R(\ve)>>1$ such that $$\int_{B_{Rr_{k}}(x_{k})}Q_ke^{2nu_k}dx\geq \Lambda_1-\ve\gamma_n.$$ From \eqref{def-vk} 
\begin{align*}
v_k(x_{k}+2Rr_{k}\sigma)&= \frac{1}{\gamma_n}\left(\int_{ B_{Rr_k}(x_k)}+\int_{\Omega\setminus B_{Rr_k}(x_k)}\right)\log\frac{1}{|x_k+2Rr_k\sigma-y|}Q_k(y)e^{2nu_k(y)}dy \\&\geq (2-\ve)u_k(x_k)+O(\log R).
\end{align*}
Splitting $\Omega$ into $$\Omega=\cup_{i=1}^3A_i,\quad A_1:=B_{r_k}(x_{k}+2Rr_k\sigma), \quad A_2:=B_{\beta_k^{-2}}(x_k)\setminus A_1,\quad A_3:=\Omega\setminus(A_1\cup A_2),$$ we write $$v_k(x_{k}+2Rr_{k}\sigma)=\sum_{i=1}^3I_i,\quad I_i:= \frac{1}{\gamma_n}\int_{A_i}\log\frac{1}{|x_k+2Rr_k\sigma-y|}Q_k(y)e^{2nu_k(y)}dy.$$ By $iii)$ of Lemma \ref{points} one has $e^{2nu_k}\leq Cr_k^{-2n}R^{-1}$ on $A_1$. This yields $$I_1\leq C r_k^{-2n}R^{-1}\int_{|y|<r_k}\log\frac{1}{|y|}dy\leq CR^{-1}|\log r_k|\leq C\frac{u_k(x_k)}{R}\leq \ve u_k(x_k),$$ for $R$ sufficiently large.   
From Lemma \ref{gap} $$I_2\leq ( 2+o(1)) u_k(x_k).$$   Using that $\log |x_k+2Rr_k\sigma-y|=O(\log \beta_k)$ on $A_3$ $$I_3\leq C\log\beta_k\int_{A_3}e^{2nu_k}dy=o(\beta_k).$$ Thus  $$ (2-\ve)u_k(x_k)\leq  v_k(x_k+2Rr_k\sigma)+C \leq (2+\ve)u_k(x_k)+o(\beta_k), $$
$$ v_k(x_k+2Rr_k\sigma)=u_k(x_k+2Rr_k\sigma)-h_k(x_k+2Rr_k\sigma)=u_k(x_k)-\beta_k\vp(x_0)+o(\beta_k).$$
The above estimates give  \eqref{33}. 
\hfill $\square$. 

\medskip 

\noindent\emph{\textbf{Proof of \eqref{vol-lower}}} Since \eqref{vol-lower} follows from \eqref{NLambda} if $N_\ell>1$, we only need to consider the case  $N_\ell=1$. 

Let $x_0=x^{(\ell)}\in\dot S_{sph}$ be such that $\nabla^2\vp(x^{(\ell)})>0$. We fix $\delta>0$ such that  $$\nabla^2\vp>0\quad\text{on }B_{2\delta}(x_0),\quad  \vp(x_0)\leq \vp(x)\leq (1-\frac 1n)\vp(x_0)\quad\text{on }B_{2\delta}(x_0),$$  and \eqref{19} holds. We write $u_k=\bar v_k+\bar h_k$ where  (recall that by assumption $Q_k\equiv 1$)$$\bar v_k(x):=\frac{1}{\gamma_n}\int_{B_{\delta}(x_0)}\log\left(\frac{1}{|x-y|}\right)e^{2nu_k(y)}dy,\quad x\in\Omega.$$ We observe that 
$$\|h_k-\bar h_k\|_{C^{2n-1}(B_{\frac32\delta}(x_0))}=\|v_k-\bar v_k\|_{C^{2n-1}(B_{\frac32\delta}(x_0))}\leq C,$$ which gives   $$\frac{\bar h_k}{\beta_k}\to\vp\quad\text{in }C^2(B_{\delta}(x_0)).$$
We  let $\bar h_k(\xi_k):=\min_{B_{\delta }}\bar h_k$. Then $\xi_k\to x_0$ and $\nabla \bar h_k(\xi_k)=0$.  Moreover, one can show that (see the proof of \eqref{est-33})   $$(x-\xi_k)\cdot \nabla \bar h_k(x)\geq c_1\beta_k|x-\xi_k|^2 \quad \text{for }x\in B_{\delta}(x_0),$$ for some $c_1>0$.  Applying Lemma \ref{poho2}   with $v=\bar v_k$, $\Omega=\tilde\Omega=B_{\delta}(x_0)$, $\xi=\xi_k$, $K=e^{2n\bar h_k}$ and $\tilde\lambda=\lambda=\lambda_k:=\int_{B_\delta(x_0)}e^{2nu_k}dx$ we get  \begin{align*} &\frac{\lambda_k}{\Lambda_1}(\lambda_k-\Lambda_1)\\&=\frac{1}{2n}\int_{B_{\delta}(x_0)}(x-\xi_k)\cdot \nabla e^{2n\bar  h_k(x)}e^{2n \bar v_k(x)}dx-\int_{\partial B_\delta (x_0)}(x-\xi_k)\cdot\nu (x)e^{2nu_k(x)}d\sigma\\
&\geq c_1\beta_k\int_{B_\delta (x_0)}|x-\xi_k|^2e^{2nu_k(x)}dx-C\int_{\partial B_\delta (x_0)}e^{2nu_k(x)}d\sigma(x)\\
&=:(I)-(II).
\end{align*}
Using that $u_k(x)=\beta_k(\vp(x)+o(1))\leq \beta_k(1-\frac{1}{2n})\vp (x_0)$  on $\partial B_\delta(x_0)$, and by \eqref{uk=betak} $$(II)\leq C e^{(2n-1)\beta_k\vp(x_0)}=o(1)u_k(x_k)e^{-2u_k(x_k)},$$ where $u_k(x_k):=\max_{ B_{\delta}(x_0)}{u_k}$.  Changing the variable $x\mapsto x_k+r_kx$ with $r_k:=e^{-u_k(x_k)}$ we obtain
\begin{align*}
(I)\geq c_1\beta_k\int_{B_\frac{\delta}{2r_k}(x_k)}g_k(x)e^{2n (u_k(x_k+r_kx)-u_k(x_k))}dx,\quad g_k(x):=|x_k-\xi_k+r_kx|^2.
\end{align*}
 Note that  $g_k(x)\geq r_k^2$ for $|x|\geq 4$ if $|x_k-\xi_k|\leq 2r_k$, and for $|x|\leq 1$ if $|x_k-\xi_k|\geq 2r_k$.  Therefore, by $iii)$ of  Lemma \ref{points} we have $(I)\geq c_2 r_k^2\beta_k$ for some $c_2>0$. 
 
 We conclude \eqref{vol-lower}.
 \hfill $\square$
 
\medskip


In the rest of this section we collect some useful lemmas. 

\begin{lem}\label{2.6}  Let $(u_k)$ be a sequence of solutions to \eqref{eq-1}, \eqref{eq-2} and \eqref{Q0} such that 
 $u_k(x_k)\to\infty$ for some $x_k\to x_0\in\Omega$. Assume that  $\frac{u_k(x_k)}{\beta_k}\not\to0$. Further assume that for every $R>0$  $$u_k(x)\leq u_k(x_k)+o(1)\quad\text{on }B_{Re^{-u_k(x_k)}}(x_k),$$ where $o(1)\to0$ as $k\to\infty$.  
Then  setting  $$\eta_{k}(x):=u_k( x_{k}+r_{k}x)+\log r_{k}+\frac{1}{2n}\log\frac{Q_0(x_0)}{(2n-1)!},\quad r_k:=e^{-u_k(x_k)},$$ we have $$\eta_k\to\eta (x):=\log\left(\frac{2\lambda}{1+\lambda^2| x|^2}\right)\quad\text{in } C^{2n-1}_{loc}(\R^{2n}), \quad \lambda:=\frac12\left(\frac{Q_0(x_0)}{(2n-1)!} \right)^\frac{1}{2n}.$$    In particular  $$\lim_{R\to\infty}\lim_{k\to\infty}\int_{B_{Rr_k}(x_k)}Q_ke^{2nu_k}dx=\Lambda_1.$$
\end{lem}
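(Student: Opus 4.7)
My plan follows the rescaling scheme used in Step 1 of the proof of \eqref{conv-2}, now starting from the hypotheses of the lemma rather than producing the rescaling via \eqref{def-maxima}. The assumption $u_k(x_k)/\beta_k\not\to 0$ gives, along the working subsequence, $u_k(x_k)\geq c\beta_k$ for some $c>0$, hence $r_k^2\beta_k=\beta_k e^{-2u_k(x_k)}\to 0$. A direct change of variables in \eqref{eq-1} produces
$$(-\Delta)^n\eta_k(x)=\frac{Q_k(x_k+r_kx)}{Q_0(x_0)}(2n-1)!\,e^{2n\eta_k(x)}$$
on any fixed ball $B_R$; the pointwise upper bound $u_k\leq u_k(x_k)+o(1)$ on $B_{Rr_k}(x_k)$ translates to $\eta_k\leq c+o(1)$ on $B_R$ with $\eta_k(0)=c:=\frac{1}{2n}\log\frac{Q_0(x_0)}{(2n-1)!}$, and changing variables in \eqref{eq-2} yields $\int_{B_R}e^{2n\eta_k}\,dx\leq C$.

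Next I would establish a uniform $L^1_{\loc}(\R^{2n})$ bound on $\Delta\eta_k$. Writing $u_k=v_k+h_k$ as in the proof of \eqref{conv-1}, differentiating \eqref{def-vk} and repeating the estimate leading to \eqref{est-laplacian}, the singular integral contributes $CR^{2n-2}$ while the smooth piece contributes $r_k^2\beta_k\cdot O(R^{2n})=o(R^{2n})$ thanks to the $C^\ell$-convergence $h_k/\beta_k\to\vp$, giving $\int_{B_R}|\Delta\eta_k|\,dx\leq CR^{2n-2}+o(R^{2n})$. Standard elliptic theory for the polyharmonic equation with locally bounded right-hand side, a one-sided bound on the solution, and an $L^1$ bound on the Laplacian then produces, up to a subsequence, $\eta_k\to\eta$ in $C^{2n-1}_{\loc}(\R^{2n})$; the normalization $\eta_k(0)=c$ prevents the degenerate limit $\eta\equiv -\infty$. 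Passing to the limit, $\eta$ solves \eqref{eq-R2n} with $m=2n$ and satisfies $\int_{B_R}|\Delta\eta|\,dx=O(R^{2n-2})$, so Theorem \ref{thmclas} forces $\eta$ to be of the spherical form \eqref{spherical}. Since $\eta(0)=c$ and $\eta\leq c$ on $\R^{2n}$, the origin is the unique maximum point of $\eta$, which pins down its center to $0$ and yields $\log(2\mu)=c$, i.e.\ $\mu=\lambda$.

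Finally, the volume quantization follows from the change of variables
$$\int_{B_{Rr_k}(x_k)}Q_ke^{2nu_k}\,dy = \int_{B_R}\frac{Q_k(x_k+r_kx)}{Q_0(x_0)}(2n-1)!\,e^{2n\eta_k(x)}\,dx,$$
which converges to $\int_{B_R}(2n-1)!\,e^{2n\eta}\,dx$ by the $C^0$-convergences of $\eta_k$ and $Q_k$ together with the uniform upper bound on $e^{2n\eta_k}$, and then letting $R\to\infty$ gives $\Lambda_1$ by the classical volume identity for the spherical solution \eqref{spherical}. The main obstacle is the compactness step: since one has only a one-sided bound on $\eta_k$ and an $L^1$ (rather than $L^\infty$) bound on $\Delta\eta_k$, the $C^{2n-1}_{\loc}$ convergence must be extracted via the Green representation, splitting $\eta_k$ into a Newtonian potential (controlled through weak-$L^p$ estimates produced by the $L^1$ bound on the Laplacian) and a polyharmonic remainder (handled by the mean value inequality and the upper bound).
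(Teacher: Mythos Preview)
Your proposal is correct and follows precisely the approach the paper itself indicates: the paper's proof of Lemma~\ref{2.6} simply says it is ``very similar to that of Step 1 in Proof of \eqref{conv-2}'' and notes that the key fact $r_k^2\beta_k\to 0$ comes from the hypothesis $u_k(x_k)/\beta_k\not\to 0$, which is exactly what you do. You additionally spell out the identification of the specific spherical limit (center $0$, scale $\lambda$) and the volume quantization, details the paper leaves implicit; these are correct elaborations rather than a different route.
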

\begin{proof}
We omit the proof  as it is very similar to that of Step 1 in   Proof of \eqref{conv-2}. The crucial fact $r_k^2\beta_k\to 0$ follows from the hypothesis $\frac{u_k(x_k)}{\beta_k}\not\to 0$.  \end{proof}

The following lemma is a generalization of \cite[Theorem 2.1]{Xu}.
\begin{lem}[Pohozaev Identity]\label{poho2}
Let $v$ be a solution to $$v(x):=\frac{1}{\gamma_n}\int_{\Omega}\log\left(\frac{1}{|x-y|}\right)K(y)e^{2n v(y)}dy,$$ where $K\in C^1(\bar \Omega)$. Then  for  ${\tilde\Omega}\subseteq \Omega$ and $\xi\in\R^{2n}$ we have \begin{align}\label{poho-eq}\frac{\tilde\lambda}{\Lambda_1}(\lambda-\Lambda_1)&=-\frac{1}{\Lambda_1}\int_{{\tilde\Omega}}\int_{\Omega\setminus\tilde\Omega}\frac{(x+y-2\xi)\cdot(x-y)}{|x-y|^2}K(y)e^{2nv(y)}K(x)e^{2nv(x)}dydx \notag\\
&\quad+\frac{1}{2n}\int_{{\tilde\Omega}}(x-\xi)\cdot \nabla K(x)e^{2nv(x)}dx-\int_{\partial \tilde\Omega}Ke^{2nv}(x-\xi) \cdot \nu(x) d\sigma(x)
\end{align}
 where $$\tilde\lambda:=\int_{\tilde\Omega}Ke^{2nv}dx,\quad \lambda:=\int_{\Omega}Ke^{2nv}dx.$$ Moreover, if $K=0$ on $\partial \Omega$ then \begin{align}\label{poho}\int_{\Omega}(\nabla K)e^{2nv}dx=0.\end{align}
\end{lem}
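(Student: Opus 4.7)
My plan is to apply the classical Pohozaev multiplier $(x-\xi)$ directly to the integral equation, in the spirit of Xu's argument but localized to the auxiliary subdomain $\tilde\Omega$. The first step is to differentiate the representation formula for $v$ under the integral sign to obtain
\[
\nabla v(x) = -\frac{1}{\gamma_n}\int_\Omega \frac{x-y}{|x-y|^2}K(y)e^{2nv(y)}\,dy,
\]
and then to evaluate the integral
\[
J:=\int_{\tilde\Omega}(x-\xi)\cdot\nabla v(x)\,K(x)e^{2nv(x)}\,dx
\]
by two independent computations whose comparison will yield \eqref{poho-eq}.

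For the first computation I substitute the double-integral formula for $\nabla v$ into $J$ and split the inner integration $\Omega=\tilde\Omega\cup(\Omega\setminus\tilde\Omega)$. On the diagonal block $\tilde\Omega\times\tilde\Omega$ I symmetrize under $x\leftrightarrow y$; the identity $(x-\xi)\cdot(x-y)+(y-\xi)\cdot(y-x)=|x-y|^2$ collapses the symmetrized kernel to $\tfrac12$, producing a contribution $\tilde\lambda^2/2$. On the cross block $\tilde\Omega\times(\Omega\setminus\tilde\Omega)$ I use the elementary decomposition $(x-\xi)\cdot(x-y)=\tfrac12(x+y-2\xi)\cdot(x-y)+\tfrac12|x-y|^2$, which extracts exactly the double integral appearing in \eqref{poho-eq} together with a mass term $\tilde\lambda(\lambda-\tilde\lambda)/2$. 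Using $2\gamma_n=\Lambda_1$ the two contributions add up to
\[
J=-\frac{\tilde\lambda\,\lambda}{\Lambda_1}-\frac{1}{\Lambda_1}\int_{\tilde\Omega}\int_{\Omega\setminus\tilde\Omega}\frac{(x+y-2\xi)\cdot(x-y)}{|x-y|^2}K(x)K(y)e^{2n(v(x)+v(y))}\,dy\,dx.
\]

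The second computation of $J$ uses $Ke^{2nv}\nabla v=\tfrac{1}{2n}K\,\nabla(e^{2nv})$ and the divergence theorem applied on $\tilde\Omega$ to the vector field $K(x-\xi)$. Since $\nabla\cdot(x-\xi)=2n$, this produces the boundary integral in \eqref{poho-eq}, a volume contribution $-\tilde\lambda$, and the gradient-of-$K$ term $\frac{1}{2n}\int_{\tilde\Omega}(x-\xi)\cdot\nabla K\,e^{2nv}\,dx$. Equating the two expressions for $J$ and rearranging isolates $\tfrac{\tilde\lambda}{\Lambda_1}(\lambda-\Lambda_1)$ on the left and gives \eqref{poho-eq}. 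For the companion statement \eqref{poho}, assuming $K|_{\partial\Omega}=0$, I integrate $\int_\Omega \nabla K\,e^{2nv}\,dx$ by parts (the boundary term vanishes by hypothesis), reducing the claim to $\int_\Omega K e^{2nv}\nabla v\,dx=0$; plugging in the formula for $\nabla v$ and exploiting the antisymmetry of $\tfrac{x-y}{|x-y|^2}$ under $x\leftrightarrow y$ kills the resulting double integral.

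I anticipate no serious obstacle; the main care is bookkeeping of constants (in particular the identity $\gamma_n=\Lambda_1/2$) and justification of differentiation under the integral, Fubini in the symmetrization, and the divergence theorem on $\tilde\Omega$. These are routine because $|x-y|^{-1}$ and $|x-y|^{-2}$ are locally integrable in $\mathbb{R}^{2n}$ for $n\ge 1$ and $Ke^{2nv}\in L^1(\Omega)$, so every integral in play is absolutely convergent and the symmetrization/integration-by-parts steps are fully justified.
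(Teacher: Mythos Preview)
Your proposal is correct and follows essentially the same route as the paper's proof: both differentiate the representation formula to obtain $\nabla v$, evaluate the Pohozaev integral $J=\int_{\tilde\Omega}(x-\xi)\cdot\nabla v\,K e^{2nv}\,dx$ in two ways (once via the double integral using the decomposition $(x-\xi)=\tfrac12(x-y)+\tfrac12(x+y-2\xi)$ together with the antisymmetry on $\tilde\Omega\times\tilde\Omega$, once via integration by parts), and then equate; the derivation of \eqref{poho} by antisymmetry plus integration by parts is likewise identical, merely run in the opposite order. The only cosmetic difference is that you split the $y$-domain $\Omega=\tilde\Omega\cup(\Omega\setminus\tilde\Omega)$ before decomposing the numerator, whereas the paper decomposes the numerator first and then discards the antisymmetric piece on $\tilde\Omega\times\tilde\Omega$.
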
  
\begin{proof}
Differentiating under the integral sign we obtain \begin{align}\label{39}\nabla v(x)=-\frac{1}{\gamma_n}\int_{\Omega}\frac{x-y}{|x-y|^2}K(y)e^{2nv(y)}dy.\end{align} 
Multiplying the above identity by $(x-\xi)K(x)e^{2nv(x)}$ and integrating on ${\tilde\Omega}$ 
\begin{align*}
(I):=&\int_{{\tilde\Omega}}(x-\xi)\cdot\nabla v(x)K(x)e^{2nv(x)}dx \\ 
&=-\frac{1}{\gamma_n}\int_{{\tilde\Omega}}\int_{\Omega}\frac{(x-\xi)\cdot(x-y)}{|x-y|^2}K(y)e^{2nv(y)}K(x)e^{2nv(x)}dydx=:(II). 
\end{align*}
Integration by parts yields
\begin{align*}
(I)&=\frac{1}{2n}\int_{{\tilde\Omega}}K(x)(x-\xi)\cdot\nabla e^{2nv(x)}dx\\
&=-\tilde\lambda-\frac{1}{2n}\int_{{\tilde\Omega}}(x-\xi)\cdot \nabla K(x)e^{2nv(x)}dx+\int_{\partial \tilde\Omega}Ke^{2nv}(x-\xi) \cdot \nu(x) d\sigma(x).
\end{align*}
Writing $(x-\xi)=\frac12 (x-y)+\frac12 (x+y-2\xi)$ and using that $\Lambda_1=2\gamma_n$, we compute
\begin{align*}
(II)&=-\frac{\tilde\lambda\lambda}{\Lambda_1}-\frac{1}{\Lambda_1}\int_{{\tilde\Omega}}\int_{\Omega}\frac{(x+y-2\xi)\cdot(x-y)}{|x-y|^2}K(y)e^{2nv(y)}K(x)e^{2nv(x)}dydx\\
&=-\frac{\tilde\lambda\lambda}{\Lambda_1}-\frac{1}{\Lambda_1}\int_{{\tilde\Omega}}\int_{\Omega\setminus\tilde\Omega}\frac{(x+y-2\xi)\cdot(x-y)}{|x-y|^2}K(y)e^{2nv(y)}K(x)e^{2nv(x)}dydx,
\end{align*}
where in the last equality we have used that $\int_{\tilde\Omega}\int_{\tilde\Omega}(\dots)dydx=0$, which follows from the fact that the integrant is antisymmetric in $(x,y)\in \tilde\Omega\times\tilde\Omega$. 
This proves \eqref{poho-eq}.

Multiplying \eqref{39} by $K(x)e^{2nv(x)}$ and integrating on ${\Omega}$ \begin{align*}0&=\frac{1}{\gamma_n}\int_{\Omega}\int_{\Omega}\frac{x-y}{|x-y|^2}K(y)e^{2nv(y)}K(x)e^{2nv(x)}dydx\\ &=-\int_{\Omega}K(x)(\nabla v(x))e^{2nv(x)}dx\\&=\frac{1}{2n}\int_{\Omega}\nabla K(x) e^{2nv(x)}dx,\end{align*} where in the last  equality we used integration by parts, and the first equality follows from the fact that the integrant is antisymmetric in $(x,y)\in\Omega\times\Omega$. 
\end{proof}
 
By a covering argument and by \cite[Lemma 3, Proposition 4]{Mar0} one can prove: 
 \begin{lem}\label{polyharmonic}
Let $\Omega$ be a connected domain in  $\R^n$. Let $\phi_k$ be a sequence of functions on $\Omega$ such that $$\D^m\phi_k=0\quad\text{in }\Omega, \quad \int_{B_R(x_0)}|\phi_k|dx\leq C,\quad \int_{\Omega}\phi_k^+dx\leq C,$$ for some $B_R(x_0)\subset\Omega$. Then, up to a subsequence, we have $$\phi_k\to \phi\quad\text{in }C^\ell_{loc}(\Omega)\quad\text{for every }\ell\in\mathbb N.$$ 
\end{lem}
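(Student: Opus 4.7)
The strategy is to convert the two hypotheses ($L^1$ bound on one ball and global $L^1$ bound on the positive part) into a uniform local $L^1$ bound on every compactly contained subdomain, and then invoke standard interior regularity for polyharmonic functions to upgrade this to $C^\ell_{loc}$ boundedness. The compactness then follows from Arzelà--Ascoli and a diagonal extraction.

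\textbf{Step 1: Interior regularity on the initial ball.} Martinazzi's Lemma 3 in \cite{Mar0} states (essentially) that if $\Delta^m \phi \equiv 0$ in $B_{2r}$, then $\|\phi\|_{C^\ell(B_r)} \leq C(\ell, m, n, r) \|\phi\|_{L^1(B_{2r})}$. Applied on shrinking concentric balls inside $B_R(x_0)$, this immediately yields that $(\phi_k)$ is uniformly bounded in $C^\ell(B_{R/2}(x_0))$ for every $\ell$. Arzelà--Ascoli plus a diagonal argument in $\ell$ yield a subsequence and a function $\phi$ with $\phi_k \to \phi$ in $C^\ell_{loc}(B_{R/2}(x_0))$.

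\textbf{Step 2: Propagating the $L^1$ bound.} The main task is to upgrade this local convergence to all of $\Omega$. For this I propose to show that for every compact $K \Subset \Omega$ there is a constant $C_K$ with $\|\phi_k\|_{L^1(K)} \leq C_K$. Since $\Omega$ is connected, one can cover $K$ by finitely many balls $B_{r_j}(y_j)$ with $B_{2r_j}(y_j)\subset \Omega$ and chain them, through a finite sequence of pairwise overlapping balls, back to $B_R(x_0)$. The propagation step along one link of the chain is the content of \cite[Proposition 4]{Mar0}: if $B_1, B_2 \subset \Omega$ are two overlapping balls with $\overline{B_1 \cup B_2} \subset \Omega$, an $L^1$ bound for $\phi_k$ on $B_1$ together with the global bound $\int_\Omega \phi_k^+ \leq C$ yields an $L^1$ bound for $\phi_k$ on $B_2$. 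Iterating along the chain transports the bound from $B_R(x_0)$ to every ball covering $K$. Once this is established, Lemma 3 of \cite{Mar0} is applied again to give a $C^\ell$ bound on any $K' \Subset K$, and Arzelà--Ascoli plus a diagonal argument (this time both in $\ell$ and along an exhaustion of $\Omega$ by compacts) produce the required limit $\phi$, polyharmonic by passage to the limit in $\Delta^m \phi_k = 0$.

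\textbf{Main obstacle.} The crux of the argument is the chain propagation: an $L^1$ bound on one ball does not, in general, control the $L^1$ norm on a neighboring ball for an arbitrary $m$-polyharmonic function, because such functions need not satisfy any sub/supermean inequality when $m \geq 2$. The one-sided bound $\int_\Omega \phi_k^+ \leq C$ is precisely what is needed to rule out unbounded growth of $\phi_k$ at either sign, and its combination with the mean-value-type identities for polyharmonic functions encoded in \cite[Proposition 4]{Mar0} is the essential ingredient. Once this propagation is in hand, the rest is routine regularity theory and diagonal extraction.
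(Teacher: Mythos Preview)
Your proposal is correct and follows exactly the approach the paper indicates: the paper's proof is merely the one-line remark ``By a covering argument and by \cite[Lemma 3, Proposition 4]{Mar0} one can prove,'' and you have accurately unpacked this into the chain-of-balls propagation (via Proposition 4) combined with interior polyharmonic regularity (via Lemma 3) and a diagonal extraction.
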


 \section{The non-local case}\label{section-nonlocal}
 
 Let us first fix some notations, and define the operator $\mathcal L_\frac32$ mentioned in the introduction. 
Points   in $\R^4$ will be denoted by  $X=(x,t)\in\R^3\times\R$.   We will identify $\mathbb R^3=\{(x,t)\,:\,t=0\}=\partial \R^4_+$. In the following, $\bar\Delta$ will denote the Laplacian in $\R^4$ and $\Delta_x$ the Laplacian in $\R^3$.

It is well known that if $U\in W^{2,2}(\mathbb R^4_+)$ is a solution to the problem
\begin{equation}\label{extension0}
\left\{
\begin{split}
&\bar \Delta^2 U=0\quad\text{in }\R^4_+\\
&\partial_t U=0\quad\text{on }\R^3,
\end{split}
\right.
\end{equation}
then $U$ is characterized by the Poisson representation formula
\begin{equation}\label{rep-Poison}
U(x,t)=\frac{4}{\pi^2} \int_{\mathbb R^3}\frac{t^3}{(t^2+|x-\tilde x|^2)^3} u(\tilde x)\,d\tilde x,
\end{equation} with $u=U|_{\R^3}$, see e.g. \cite{CS, Case-Chang, CG,  Chang-Yang, DGHM}. 
Define the operator on $\mathbb R^3$ by
\begin{equation}\label{ope_def}
\mathcal L_{\frac{3}{2}}U:=\frac{1}{2}\lim_{t\rightarrow 0}\partial_t \bar\D U. 
\end{equation}
Then  
\begin{equation*}\label{relation}
\mathcal L_{\frac{3}{2}}U=(-\Delta_x)^{\frac{3}{2}} u,
\end{equation*}
where the $\frac{3}{2}$-fractional Laplacian is defined as the operator with Fourier symbol $|\xi|^{3}$.
Note that $\mathcal L_{\frac{3}{2}}$ can also be defined in a distributional sense. Indeed, given $u\in L_{loc}^1(\mathbb R^3)$, we say that $U\in W^{2,2}(\mathbb R^4_+)$ satisfying $\partial_t U=0$ on $\mathbb R^3$ and $\bar\D^2 U=0$ in $\R^4_+$ is a weak solution to
$\mathcal L_{\frac{3}{2}} U=w, $
 if
\begin{equation*}
0=\int_{\mathbb R^4_+}\bar \Delta U\bar\Delta \psi\,dxdt-2\int_{\mathbb R^3} w\psi \,dx
\end{equation*}
for every test function $\psi\in\mathcal C^{\infty}(\mathbb R^4_+)$ with compact support in $\overline{\mathbb R^4_+}$
and satisfying $\partial_t \psi=0$ on $t=0$.


We say that a solution $U$ to \eqref{extension0} is \emph{representable} if it coincides with its Poisson  representation formula \eqref{rep-Poison} and $\mathcal L_{\frac{3}{2}}U$ is well defined. In particular, the boundary data $u=U|_{t=0}$ of a representable solution  satisfies $$\int_{\R^n}\frac{|u(x)|}{1+|x|^6}\,dx<\infty. $$
Finally we set \begin{equation}\label{K} 
\begin{split}
\K(\Omega):=\big\{ H\in C^\infty(\R^4_+\cup \Omega): \D^2H=0,\,H\le 0, \text{ in }\mathbb R^4_+,
  H\not\equiv 0,\, \partial_tH=\mathcal{L}_\frac32H=0\text{ on }\Omega\big\}.
\end{split}
\end{equation}
 
 We  now  prove a corresponding version  of Lemma \ref{2.6} for the non-local case. 
 
\begin{lem}\label{4.1}  Let $(U_k)$ be a sequence of solutions as in the statement of Theorem \ref{thm-nonlocal}.   Assume that  
 $u_k(x_k)\to\infty$ for some $x_k\to x_0\in\Omega$ such that  $\frac{u_k(x_k)}{\beta_k}\not\to0$. Further assume that for every $R>0$  $$u_k(x)\leq u_k(x_k)+o(1)\quad\text{on }B_{Re^{-u_k(x_k)}}(x_k),$$ where $o(1)\to0$ as $k\to\infty$.  
Then  setting  $$\tilde u_{k}(x):=u_k( x_{k}+r_{k}x)-u_k(x_k),\quad r_k:=e^{-u_k(x_k)},$$ we have \begin{align}\label{tildeu}\tilde u_k\to\tilde u(x):=\log\left(\frac{1}{1+\lambda^2| x|^2}\right)\quad\text{in } C^{2}_{loc}(\R^{3}), \quad \lambda:=\frac12\left(\frac{Q_0(x_0)}{2} \right)^\frac{1}{3}.\end{align}    In particular  $$\lim_{R\to\infty}\lim_{k\to\infty}\int_{B_{Rr_k}(x_k)}Q_ke^{3u_k}dx=\Lambda_1=4\pi^2.$$
\end{lem}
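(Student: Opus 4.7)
The plan is to adapt the proof of Lemma~\ref{2.6} to the bi-harmonic extension setting. Set $X_k:=(x_k,0)$, $r_k:=e^{-u_k(x_k)}$, and consider the rescaled extension
\begin{equation*}
\tilde U_k(X):=U_k(X_k+r_k X)-u_k(x_k),\qquad X=(x,t)\in\overline{\R^4_+},
\end{equation*}
so that $\tilde u_k=\tilde U_k|_{t=0}$ is the function in the statement. By the $3$-homogeneity of $\mathcal L_{\frac32}$ under scaling, $\tilde U_k$ is bi-harmonic in $\R^4_+$, satisfies $\partial_t\tilde U_k=0$ on $\R^3$, and $\mathcal L_{\frac32}\tilde U_k=\tilde Q_k e^{3\tilde u_k}$ on $\tilde\Omega_k:=r_k^{-1}(\Omega-x_k)$, where $\tilde Q_k(x):=Q_k(x_k+r_k x)\to Q_0(x_0)$ locally uniformly and $\tilde\Omega_k$ exhausts $\R^3$. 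Moreover $\tilde u_k(0)=0$, $\tilde u_k\le o(1)$ on each fixed $B_R$ by hypothesis, and $\int_{\tilde\Omega_k}e^{3\tilde u_k}\,dx\le C$ by \eqref{cond-vol}.

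Next, to obtain compactness, I decompose $U_k=V_k+H_k$ as in the proof of Theorem~\ref{thmC}, where $H_k$ is the bi-harmonic part (so $H_k/\beta_k\to\Phi$ in $C^{2,\alpha}_{loc}$) and $V_k$ is the potential part given by the integral representation against $Q_k e^{3u_k}$. Rescaling yields $\tilde U_k=\tilde V_k+\tilde H_k$. The crucial fact, playing the role of $r_k^2\beta_k\to0$ in the local case, is that $u_k(x_k)/\beta_k\not\to0$ forces $-\log r_k\ge c\beta_k$ for some $c>0$, whence $r_k^j\beta_k\to0$ for every $j\ge 1$; Taylor-expanding $\Phi$ around $X_k$ then shows $\tilde H_k-\tilde H_k(0)\to0$ in $C^{2,\alpha}_{loc}(\overline{\R^4_+})$, so all the interesting behaviour is carried by $\tilde V_k$. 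Standard $L^p\to L^\infty$ bi-harmonic bootstrap applied to the representation formula, using the local upper bound on $\tilde u_k$ and the volume bound, then yields uniform local $C^{2,\alpha}$ estimates for $\tilde U_k$ on $\overline{\R^4_+}$. Passing to a subsequence, $\tilde U_k\to\tilde U$ in $C^{2,\alpha}_{loc}(\overline{\R^4_+})$, where $\tilde U$ is bi-harmonic in $\R^4_+$, $\partial_t\tilde U=0$ on $\R^3$, and $\mathcal L_{\frac32}\tilde U=Q_0(x_0)e^{3\tilde u}$ on $\R^3$, with $\tilde u\le 0=\tilde u(0)$ and $\int_{\R^3}e^{3\tilde u}\,dx<\infty$.

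Finally, I invoke Theorem~\ref{thmclas} with $m=3$ to classify $\tilde u$: the ``$v+p$'' alternative is excluded by checking $\int_{B_R}|\Delta_x\tilde u|\,dx\le CR$ for $R\ge 1$, which follows from the same splitting (the rescaled harmonic contribution gives $o(1)$ on each $B_R$ while the potential contribution is $O(R)$ via direct kernel estimates, as in \eqref{est-laplacian}). Hence $\tilde u$ is spherical, and the three constraints $\tilde u(0)=0$, $\tilde u\le 0$, and $\mathcal L_{\frac32}\tilde u=Q_0(x_0)e^{3\tilde u}$ determine $\tilde u(x)=\log\tfrac{1}{1+\lambda^2|x|^2}$ with $\lambda=\tfrac12(Q_0(x_0)/2)^{1/3}$, proving \eqref{tildeu}. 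The ``in particular'' quantization then follows by a change of variables: $\int_{B_{Rr_k}(x_k)}Q_k e^{3u_k}\,dx=\int_{B_R}\tilde Q_k e^{3\tilde u_k}\,dx\to Q_0(x_0)\int_{\R^3}e^{3\tilde u}\,dx$, and the latter equals $\Lambda_1=4\pi^2$ for the explicit spherical profile.

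The principal obstacle is the $C^{2,\alpha}_{loc}(\overline{\R^4_+})$ compactness of $\tilde U_k$ up to the boundary $\{t=0\}$, since the nonlinear Neumann-type condition $\mathcal L_{\frac32}\tilde U_k=\tilde Q_k e^{3\tilde u_k}$ couples the interior bi-harmonic problem to a boundary nonlinearity, whereas in the local setting of Lemma~\ref{2.6} one appeals directly to interior $(-\Delta)^n$ estimates. The splitting $\tilde U_k=\tilde V_k+\tilde H_k$, together with the super-exponential smallness $r_k\le e^{-c\beta_k}$, is what collapses the harmonic piece to a constant in the limit and leaves the potential piece amenable to classical elliptic bootstrap, so that the classification theorem can be applied.
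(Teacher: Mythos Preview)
Your strategy---decompose $U_k=V_k+H_k$, rescale, kill the biharmonic piece, obtain compactness for the potential piece, classify the limit---is the paper's. Two tactical points differ. For $\tilde H_k\to 0$ you Taylor-expand $H_k$ at $X_k$ using $H_k/\beta_k\to\Phi$ in $C^{2,\alpha}_{\loc}$; this is valid, but note that the convergence you need is \eqref{conv-55} (the analogue of \eqref{conv-vp}, valid across all of $\Omega$ after even reflection and polyharmonic regularity), not the conclusion of Theorem~\ref{thmC}, which concerns $U_k$ rather than $H_k$ and holds only away from $S_\Phi\cup S_1$---and typically $x_0\in S_1$. The paper instead bounds $\int\tilde H_k^+$ through the Poisson formula and \eqref{extra-assumption}, shows $\bar\Delta\tilde H_k\to 0$ from \eqref{conv-55}, and concludes by a harmonic Liouville argument; your route is shorter once \eqref{conv-55} is in hand. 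For the classification, the paper does not pass through the extension and Theorem~\ref{thmclas}: it takes the limit directly in the boundary integral equation derived from \eqref{vk-54}, so that $\tilde u$ automatically satisfies an integral representation to which the classification of \cite{Xu} applies. This sidesteps your implicit obligation to argue that the limiting extension $\tilde U$ is representable (so that $\mathcal L_{3/2}\tilde U=(-\Delta_x)^{3/2}\tilde u$) and makes the Laplacian-growth check unnecessary. Since your own splitting already yields $\tilde h_k\to 0$ and $\tilde v_k$ converging through its integral formula, you effectively land on the paper's argument in the end.
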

\begin{proof}
It has been shown in \cite{DGHM}  that $U_k$ can be decomposed as $U_k=V_k+H_k$ where \begin{align}\label{def-Vk}
V_k(x,y)=\frac{1}{2\pi^2}\int_{\Omega}\log\left(\frac{1}{|(x,y)-(\tilde x,0)|}\right)Q_k(\tilde x)e^{3u_k(\tilde x)}\,d\tilde x,\quad (x,y)\in \R^3\times\R,
\end{align} and $H_k$ is given by the Poison formula \eqref{rep-Poison} with the boundary data $h_k:=u_k-v_k$, where $v_k:=V_k|_{\R^3}$. It follows that \begin{align} \label{vk-54}  v_k(x)=\frac{1}{2\pi^2}\int_{\Omega}\log\frac{1}{|x-\tilde x|}Q_k(\tilde x)e^{3u_k(\tilde x)}d\tilde x. \end{align} The function $H_k$ can be extended on $\R^4_-$ by setting $H_k(x,t)=H_k(x,-t)$. Then $H_k$ satisfies $$\lim_{t\to0}\frac{\partial}{\partial_t}H_k(x,t)=0=\lim_{t\to0}\frac{\partial}{\partial_t}\bar \D H_k(x,t)\quad\text{for every }x\in\Omega.$$ Therefore, $$\bar \D^2 H_k=0\quad\text{on }\R^4\setminus(\Omega^c\times\{0\}).$$ In fact, for every bounded open set ${\bf \Omega}\subset\R^4$ satisfying ${\bf \Omega}\cap\R^3\Subset\Omega$, we have  \begin{align}\label{conv-55}  \frac{H_k}{\beta_k}\to\Phi\quad\text{in }C^\ell(\bar{\bf\Omega}),\end{align} for every integer $\ell\geq0$. 
 
Now we set $$\tilde u_k(x)=u_k(x_k+r_kx)-u_k(x_k),\quad \tilde U_k(X)=U_k((x_k,0)+r_kX) -U_k((x_k,0)),$$ $$\tilde v_k(x)=v_k(x_k+r_kx)-v_k(x_k),\quad \tilde V_k(X)=V_k((x_k,0)+r_kX) -V_k((x_k,0)),$$
$$ \tilde h_k(x)=h_k(x_k+r_k x)-h_k(x_k)  ,\quad   \tilde H_k(X)=H_k((x_k,0)+r_kX) -H_k((x_k,0)) .$$  Using  the uniform bounds \eqref{cond-vol} and \eqref{extra-assumption} one can show that $$\int_{\R^3}\frac{ |\tilde v_k(x)|}{1+|x|^{3+\ve}}dx\leq C(\ve),\quad \int_{\Omega_k}\tilde u_k^+(x)dx\leq C,\quad \int_{\R^3\setminus\Omega_k}\frac{\tilde u_k^+(x)}{1+|x|^6}dx\to0,$$ for every $\ve>0$, where $\Omega_k:=\{x\in\R^3:x_k+r_kx\in\Omega\}$.  Hence,  $$ \int_{\Omega_k}\tilde h_k^+(x)dx   +\int_{\R^3\setminus\Omega_k}\frac{\tilde h_k^+(x)}{1+|x|^6}dx\leq C.$$ This  and  the Poision representation formula of $\tilde H_k$  leads to   $$\limsup_{k\to\infty}\int_{\mathbb B_R} \tilde H_k^+dX\leq CR^{4-\ve}\quad\text{for every }\mathbb B_R\subset\R^4.$$  Since $r_k^2\beta_k\to0$, we have that $$\bar \D\tilde H_k\to0,$$ thanks to \eqref{conv-55}. Therefore, up to a subsequence, $\tilde H_k\to\tilde H$  in $C^4_{loc}(\R^4)$ for some   harmonic  function $\tilde H$ satisfying $$\int_{\mathbb B_R}\tilde H^+dX\leq CR^{4-\ve}.$$ This implies that $\tilde H $ is bounded from above, and hence $\tilde H\equiv 0$. 

Next we show that the sequence $(\tilde v_k)$ is bounded in $C^2_{loc}(\R^3)$. Indeed, setting $\tilde Q_k(x):=Q_k(x_k+r_k x)$ we see that $$\tilde v_k(x)=\frac{1}{2\pi^2}\int_{\Omega_k}\log\frac{|y|}{|x-y|}\tilde Q_k(y)e^{3\tilde u_k(y)}dy,\quad \int_{\Omega_k}\tilde Q_ke^{3\tilde u_k}dy\leq C.$$ As the function $\tilde u_k$ satisfies $$\tilde u_k\leq o(1)\quad\text{on }B_R,\quad\text{for every fixed }R>0,$$ differentiating under the integral sign, one easily gets that the sequence  $(|\nabla\tilde v_k|)$ and $(|\nabla^2\tilde v_k|)$ are bounded in $C^0_{loc}(\R^3)$. Therefore, as $\tilde v_k(0)=0$, we conclude that $(\tilde v_k)$ is bounded in $C^2_{loc}(\R^3)$. 

Thus, up to a subsequence, $\tilde u_k=\tilde v_k+\tilde h_k\to \tilde v=:\tilde u$ in $C^1_{loc}(\R^3)$. It easily follows that $\tilde u$ satisfies $$\tilde u(x)=\frac{1}{2\pi^2}\int_{\R^3}\log\frac{|y|}{|x-y|}\tilde Q_0(x_0)e^{3\tilde u(y)}dy,\quad \tilde u\leq \tilde u(0),\quad \quad \int_{\R^3}e^{3\tilde u}dy<\infty.$$
Then a classification result to the above integral equation in \cite{Xu}   imply that $\tilde u$ should be of the form  given in \eqref{tildeu}.
   \end{proof}
   
   \medskip 

\noindent{\bf\emph{Proof of Theorem \ref{thm-nonlocal}} }  The proof of \eqref{conv-23} is very similar to the one of \eqref{conv-2}. Here one needs to use Lemma \ref{4.1} and the  representation formula \eqref{def-Vk}. 

The quantization result   \eqref{quant-24} can be proved using a Pohozaev type identity for the integral equation \eqref{vk-54}. Notice that  corresponding versions of  Lemmas \ref{points}, \ref{outside} and \ref{gap} for the non-local case follow easily.  
\hfill $\square$

\section{Examples}\label{Examples}

In  Example 1 we show that the convergence  in \eqref{conv-2} is sharp in the sense that $C^{1,\alpha}_{loc}(\Omega\setminus S_{sph})$ can not be replaced by  $C^{2}_{loc}(\Omega\setminus S_{sph})$.   \medskip

\noindent\textbf{Example 1}  
Let  $u$  be an entire solution to  \eqref{eq-R2n} with $m=2n\geq 4$ such that (see \cite{CC,hm, WY} for existence of such solutions) $$u(x)=-|x|^2+O(\log|x|)\quad \text{as }|x|\to\infty.$$ Then $u$ satisfies the integral equation \begin{align}\label{35}u(x)=\frac{1}{\gamma_n}\int_{\R^{2n}}\log\left( \frac{|y|}{|x-y|}\right)e^{2nu(y)}dy-|x|^2+c,\end{align} for some $c\in\R$.   Differentiating under the integral sign, from \eqref{35} \begin{align*}\D u(0)=-\frac{2n-2}{\gamma_n}\int_{\R^{2n}}\frac{1}{|y|^2}e^{2nu(y)}dy-2n< -2n.\end{align*} We set $$u_k(x)=u(kx)+\log k\quad \text{for }x\in B_1.$$ Then $u_k$ satisfies \eqref{eq-1}-\eqref{eq-2} with $Q_k\equiv (2n-1)!$, $\Omega =B_1$ and for some $\Lambda>0$. It follows from \eqref{35} that $$\frac{u_k}{\beta_k}\to \vp\quad\text{in }C^{2n-1}_{loc}(B_1\setminus\{0\}),\quad \vp:=-|x|^2,\quad \beta_k:=k^2.$$ Notice that $$\frac{\D u_k(0)}{\beta_k}=\D u(0)<-2n=\D \vp(0).$$ Therefore, $\frac{u_k}{\beta_k}\not\to\vp$ in $C^2_{loc}(B_1)$ (here $S_{sph}=\emptyset$).

\medskip 

\noindent\textbf{Example 2} It has been shown in \cite{H-volume} (see also \cite{HD, LM-vol}) that for every $n\geq 3$ and  $\Lambda>\Lambda_1$   there exists a radially symmetric solution $u$ to \begin{align}\label{36}(-\D )^nu=e^{2nu}\quad \text{in }\R^{2n},\quad \int_{\R^{2n}}e^{2nu}dx=\Lambda.\end{align} In fact $u$ is give by $$u(x)=\frac{1}{\gamma_n}\int_{\R^{2n}}\log\left( \frac{|y|}{|x-y|}\right)e^{2nu(y)}dy+c_1|x|^2-c_2|x|^4+c_3,$$  for some $c_1,c_2>0$ and $c_3\in\R$. We set $$u_k(x)=u(kx)+\log k,\quad x\in B_1.$$ Then  $u_k$ satisfies \eqref{conv-1} with  $  \vp:=-c_2|x|^4,\,\beta_k:=k^4$ and $$\lim_{\ve\to0}\lim_{k\to\infty}\int_{B_\ve}e^{2nu_k}dx=\Lambda.$$

\medskip 

Next example shows that one can have $S_{sph}\cap S_\vp\neq \emptyset$.  \medskip 

\noindent\textbf{Example 3} Let $n= 3$ and let $\Lambda>\Lambda_1$ be fixed. Then  there exists a sequence of radially symmetric solutions $(u_k)$ to \eqref{36} such that (see \cite{HM-gluing})$$u_k(0)\to\infty,\quad u_k(1)\to\infty , \quad \int_{B_2}e^{2nu_k}dx\to\Lambda,$$ and $u_k$ is given by $$u_k(x)=\frac{1}{\gamma_n}\int_{\R^{2n}}\log\left( \frac{1}{|x-y|}\right)e^{2nu(y)}dy-(1+o(1)) u_k(0)(1-|x|^2)^2+c_k,$$  $$c_k\to\infty \quad \text{and }c_k=o(u_k(0)).$$ Notice that $u_k|_{B_2}$ satisfies \eqref{conv-1} with $$\beta_k:=  u_k(0),\quad \vp(x)=-(1-|x|^2)^2,\quad S_{sph}=\{0\}.$$ Let $\rho_k\to\infty$ slowly enough so that $$e^{-2u_k(0)}u_k(0)\rho_k^2\to0.  $$ Set $\bar u_k(x)=u_k(\rho_kx)+\log\rho_k$ on $B_1$.  Then $\bar u_k$ satisfies \eqref{conv-1} with  $$ \bar\vp:= -|x|^4,\quad \bar\beta_k:=u_k(0)\rho_k^4 .$$ Moreover,  as $e^{-2\bar u_k(0)}\bar \beta_k\to0$, we have  $0\in S_{sph}$, and hence $S_{sph}=S_{\bar \vp}=\{0\}$.  Also note that a quantization result does not hold for $\bar u_k$ as $$\lim_{\ve\to0}\lim_{k\to\infty}\int_{B_\ve}e^{2n\bar u_k}dx=\Lambda.$$

\medskip   

\noindent\textbf{Acknowledgements}  The author is greatly thankful to Luca Martinazzi and Pierre-Damien Thizy   for various stimulating discussions.

\end{document}